\def\urls#1{{\footnotesize\url{#1}}}
\def\mindex#1{\index{#1}}
 \def\oc{\star}   
\def\ocp{*}   
\DeclareFontFamily{U}{mathx}{\hyphenchar\font45}
\DeclareFontShape{U}{mathx}{m}{n}{<-> mathx10}{}
\DeclareSymbolFont{mathx}{U}{mathx}{m}{n}
\DeclareMathAccent{\widebar}{0}{mathx}{"73}
\def\barUpupsilon{\widebar{\Upupsilon}}
\def\tilnabla{{\widetilde{\nabla}\!}}
\newcommand{\qsaprobe}{{\scalebox{1.1}{$\upxi$}}}  
\newcommand{\bfqsaprobe}{{\scalebox{1.1}{$\bm{\upxi}$}}}
\def\SAtime{\uptau}
\def\Lip{\ell}
\def\Obj{L}
\def\preODEstate{\Uppsi} 
\def\bfpreODEstate{\bm{\preODEstate}}
\def\ODEstate{\Uptheta} 
\def\bfODEstate{\bm{\Uptheta}}
\def\barODEstate{\widebar{\Uptheta}}
\def\tilODEstate{\widetilde{\Uptheta}}
\def\haODEstate{\widehat{\Uptheta}}
\def\bfhaODEstate{\widehat{\bfODEstate}}
\def\odestate{\upvartheta}
\def\bfodestate{\bm{\upvartheta}}
\def\fee{\upphi}
\def\feex{\widetilde{\fee}}
\def\Hor{\mathcal{T}} 
\def\elig{\zeta}
\def\uH{\underline{H}}
\def\uQ{\underline{Q}}
\newcommand{\bbblot}{\raise1pt\hbox{\vrule height .4ex width .4ex depth .05ex}}
\long\def\defbox#1{\framebox[.9\hsize][c]{\parbox{.85\hsize}{%
\parindent=0pt
\baselineskip=12pt plus .1pt      
\parskip=6pt plus 1.5pt minus 1pt 
 #1}}}
\long\def\beginbox#1\endbox{\subsection*{}%
\hbox{\hspace{.05\hsize}\defbox{\medskip#1\bigskip}}%
\subsection*{}}
\def\endbox{}
 \def\archival#1{} 
\def\llbracket{[\![}
\def\rrbracket{]\!]}
\def\FRAC#1#2#3{\genfrac{}{}{}{#1}{#2}{#3}}
\def\ddt{{\mathchoice{\FRAC{1}{d}{dt}}%
{\FRAC{1}{d}{dt}}%
{\FRAC{3}{d}{dt}}%
{\FRAC{3}{d}{dt}}}}
\def\ddw{{\mathchoice{\FRAC{1}{d}{dw}}%
{\FRAC{1}{d}{dw}}%
{\FRAC{3}{d}{dw}}%
{\FRAC{3}{d}{dw}}}}
\def\ddtp{{\mathchoice{\FRAC{1}{d^{\hbox to 2pt{\rm\tiny +\hss}}}{dt}}%
{\FRAC{1}{d^{\hbox to 2pt{\rm\tiny +\hss}}}{dt}}%
{\FRAC{3}{d^{\hbox to 2pt{\rm\tiny +\hss}}}{dt}}%
{\FRAC{3}{d^{\hbox to 2pt{\rm\tiny +\hss}}}{dt}}}}
\def\ddyp{{\mathchoice{\FRAC{1}{d^{\hbox to 2pt{\rm\tiny +\hss}}}{dy}}%
{\FRAC{1}{d^{\hbox to 2pt{\rm\tiny +\hss}}}{dy}}%
{\FRAC{3}{d^{\hbox to 2pt{\rm\tiny +\hss}}}{dy}}%
{\FRAC{3}{d^{\hbox to 2pt{\rm\tiny +\hss}}}{dy}}}}
\def\half{{\mathchoice{\FRAC{1}{1}{2}}%
{\FRAC{1}{1}{2}}%
{\FRAC{3}{1}{2}}%
{\FRAC{3}{1}{2}}}}
\def\fourth{{\mathchoice{\FRAC{1}{1}{4}}%
{\FRAC{1}{1}{4}}%
{\FRAC{3}{1}{4}}%
{\FRAC{3}{1}{4}}}}
\def\limsup{\mathop{\rm lim{\,}sup}}
\def\argmin{\mathop{\rm arg{\,}min}}
\def\Dcs{\text{\rm f}}
\def\state{{\sf X}}
\def\ustate{{\sf U}}
\def\bfmath#1{{\mathchoice{\mbox{\boldmath$#1$}}%
{\mbox{\boldmath$#1$}}%
{\mbox{\boldmath$\scriptstyle#1$}}%
{\mbox{\boldmath$\scriptscriptstyle#1$}}}}
\def\bfPhi{\bfmath{\Phi}}
\def\bfma{\bfmath{a}}
\def\bfmu{\bfmath{u}} 
\def\bfmx{\bfmath{x}}
\def\bfmz{\bfmath{z}}
\def\bfmY{\bfmath{Y}}
\def\bfmhhaY{\bfmath{\hhaY}} 
\def\bfmhhaY{\hbox to 0pt{$\widehat{\bfmY}$\hss}\widehat{\phantom{\raise 1.25pt\hbox{$\bfmY$}}}}
\def\haf{{\hat f}}
\def\hag{{\hat g}}
\def\haA{\widehat A}
\def\tiltheta{{\tilde \theta}}
\def\tilf{\tilde f}
\def\clE{{\cal E}}
\def\clW{{\cal W}}
 \def\head#1{\paragraph{\textit{#1}}}
\def\eqdef{\mathbin{:=}}
\def\Expect{{\sf E}}
\def\lgmath#1{{\mathchoice{\mbox{\large #1}}%
{\mbox{\large #1}}%
{\mbox{\tiny #1}}%
{\mbox{\tiny #1}}}}
\def\Zero{{\mathchoice{\lgmath{\sf 0}}%
{\mbox{\sf 0}}%
{\mbox{\tiny \sf 0}}%
{\mbox{\tiny \sf 0}}}}
\def\ind{\bbbone}
 \def\epsy{\varepsilon}
\def\varble{\,\cdot\,}
\def\formtmp#1#2{{\vskip12pt\noindent\fboxsep=0pt\colorbox{#1}{\vbox{\vskip3pt\hbox to \textwidth{\hskip3pt\vbox{\raggedright\noindent\textbf{#2\vphantom{Qy}}}\hfill}\vspace*{3pt}}}\par\vskip2pt%
\noindent\kern0pt}}
\titleformat\subparagraph[runin]
\titlespacing\subparagraph{0pt}
                       {.1ex minus 0.2ex}
                       {.75em}
\newenvironment{programcode}[1]{\ignorespaces\def\stmtopen##1{##1}%
\pagebreak[3]%
\formtmp{programcode}{#1}%
\endlinechar=-1\relax%
\nopagebreak[4]}{%
\noindent\textcolor{programcode}{\rule{\columnwidth}{1pt}}\vskip1pt\par\addvspace{\baselineskip}%
\endlinechar=13}
\newtheoremstyle{thm}{12pt}{15pt}%
     {\itshape}
     {}
     {\bfseries}
     {}
     {0pt}
     {\thmname{#1}\thmnumber{ #2.}\thmnote{ \textbf{(#3)}}\quad}
\def\bara{{\overline {a}}}
\def\barf{{\widebar{f}}}
\def\barg{{\widebar{g}}}
\def\barv{{\overline {v}}}
\def\barB{{\bar{B}}}
\def\barY{{\bar{Y}}}
\def\ass(#1:#2){(#1\ref{#1:#2})}
\def\ritem#1{
\item[{\sf \ass(\current_model:#1)}]
}
\newenvironment{recall-ass}[1]{%
\begin{description}
\def\current_model{#1}}{
\end{description}
}
\def\sq{\hbox{\rlap{$\sqcap$}$\sqcup$}}
\def\qed{\ifmmode\sq\else{\unskip\nobreak\hfil
\penalty50\hskip1em\null\nobreak\hfil\sq
\parfillskip=0pt\finalhyphendemerits=0\endgraf}\fi}
\newcommand{\blot}{\vrule height 1.1ex width .9ex depth -.1ex }
\def\qedb{\ifmmode\blot\else{\vspace{-.2cm}\unskip\nobreak\hfil
\penalty50\hskip1em\null\nobreak\hfil\blot
\parfillskip=0pt\finalhyphendemerits=0\endgraf}\fi}
\newtheoremstyle{example}{15pt}{20pt}%
     {}
     {}
     {\bfseries}
     {}
     {1pt}
     {\thmname{#1}\thmnumber{ #2.}~\thmnote{\textit{\textbf{#3}}}%
     \\[.15cm]\unskip\nobreak}
\theoremstyle{example}
\newtheorem{exmp}{Example}[section]
\theoremstyle{example}
\newcounter{rmnum}
\newenvironment{romannum}{\begin{list}{{\upshape (\roman{rmnum})}}{\usecounter{rmnum}
\setlength{\leftmargin}{18pt}
\setlength{\rightmargin}{8pt}
\setlength{\itemindent}{2pt}
}}{\end{list}}
\newcounter{anum}
\newcommand{\field}[1]{\mathbb{#1}}
\def\Re{\field{R}}
\def\nat{\field{Z}_+}
\def\Co{\field{C}}
\def\Expect{{\sf E}}
\def\transpose{{\intercal}}
\def\argmin{\mathop{\rm arg\, min}}
\def\ind{\hbox{\large \bf 1}}
\def\trace{\hbox{\rm trace\,}}  
\def\epsy{\varepsilon}
\def\varble{\,\cdot\,}
\def\haY{\widehat{Y}}
\def\hhaY{\hbox to 0pt{$\haY$\hss}\widehat{\phantom{\raise 1.25pt\hbox{Y}}}}
\def\haz{\widehat z}
\def\haA{\widehat A}
\def\haG{{\widehat G}}
\def\haY{\widehat Y}
\def\bfPhi{\bfmath{\Phi}}
\def\bfalpha{\bm{\alpha}}
\def\bfbeta{\bm{\beta}}
\newlength{\dhatheight}
\def\dlstep{r}
\def\scerror{Z}
\def\bfscerror{\bm{Z}}
\def\qsaDyn{\text{H}}
\renewcommand{\eqdef}{\ensuremath{\stackrel{\hbox{\sf\tiny def}}{=}}}
\def\Err{o}
\def\tilUpupsilon{\widetilde{\Upupsilon}}
\def\tilXi{\widetilde{\Xi}}
\def\prstate{\Upomega}
\def\XiI{\Xi^{\mathrm{\tiny I}}}
\renewcommand{\qsaprobe}{{\mathchoice{\mathlarger{\upxi}}%
		{\upxi}%
		{\upxi}%
		{\upxi}}}
\def\Ebox#1#2{%
\smallbreak \centerline{
\includegraphics[width= #1\hsize]{#2}  
}
}
 \theoremstyle{thm}
\newtheorem{theorem}{Theorem}[section]
\newtheorem{lemma}[theorem]{Lemma}
\newtheorem{proposition}[theorem]{Proposition}
\newtheorem{corollary}[theorem]{Corollary}
 \definecolor{programcode}{gray}{0.9}
\definecolor{MyDarkBlue}{cmyk}{0.5,0.1,0,0.9}
\newlength{\noteWidth}
\long\def\notes#1{\ifinner
{\footnotesize #1}
\else 
\marginpar{\parbox[t]{\noteWidth}{\raggedright\footnotesize#1}}
\fi\typeout{#1}}
 \def\notes#1{\typeout{check notes!!!}}   
\def\rd#1{{\color{red}#1}}
\def\sfb#1{}
\Crefname{corollary}{Corollary}{Corollaries}
\Crefname{eqnarray}{eq.}{eqs.}
\Crefname{equation}{eq.}{eqs.}
\Crefname{figure}{Fig.}{Figs.}
\Crefname{tabular}{Tab.}{Tabs.}
\Crefname{table}{Tab.}{Tabs.}
\Crefname{proposition}{Prop.}{Propositions}
\Crefname{theorem}{Thm.}{Thms.}
\Crefname{definition}{Def.}{Defs.} 
\Crefname{section}{Section}{Sections}
\Crefname{lemma}{Lemma}{Lemmas}
\Crefname{assumption}{Assumption}{Assumptions}
\title{\LARGE   Accelerating  Optimization and Reinforcement Learning
\\
 with  
Quasi-Stochastic Approximation  
}
\author{Shuhang Chen 
	\and Adithya Devraj 
	\and Andrey Bernstein 
	\and Sean Meyn
	\thanks{SC and SPM are with the University of Florida, Gainesville, FL 32611  (mathematics and ECE).  
		AD is with the EE department at Stanford University, Stanford, CA 94305.  AB is with the National Renewable Energy Laboratory, Golden, CO 80401.
		Financial support from ARO award W911NF1810334
		and National Science Foundation award  EPCN 1935389
		is gratefully acknowledged.}%
}
\date{}
\begin{document}
 
\maketitle

\begin{abstract} 
 
The ODE (ordinary differential equation) method has been a workhorse for algorithm design and analysis since the introduction of the stochastic approximation technique of Robbins and Monro in the early 1950s.    It is now understood that convergence theory amounts to establishing robustness of Euler approximations for ODEs,  while theory of rates of convergence requires finer probabilistic analysis. This paper sets out to extend this theory to quasi-stochastic approximation (QSA), based on algorithms in which the ``noise'' or ``exploration'' is based on deterministic signals, much like quasi-Monte Carlo.   The main results are obtained under minimal assumptions:  the usual Lipschitz conditions for ODE vector fields,   and for rate results it is assumed that there is a well defined linearization near the optimal parameter $\theta^\ocp$, with Hurwitz linearization matrix $A^\ocp$.    Algorithm design is performed in continuous time,  in anticipation of discrete-time implementation based on Euler approximations, or high-fidelity alternatives.

The main contributions are summarized as follows:
 \begin{romannum}
 
\item  If the algorithm gain is chosen as $a_t = g/(1+t)^\rho$ with $g>0$ and $\rho\in(0,1)$,  then the rate of convergence of the algorithm is $1/t^\rho$.   There is also a well defined ``finite-$t$'' approximation:
\[
a_t^{-1} \{ \ODEstate_t - \theta^\ocp \}   =   \barY + \XiI_t + o(1) 
\]
where $\barY\in\Re^d$ is a vector identified in the paper, and $\{ \XiI_t   \}$ is bounded with zero temporal mean.

\item With gain   $a_t = g/(1+t)$ the results are not as sharp:   the rate of convergence  $1/t$  holds only if $I + g A^\ocp$ is Hurwitz.   Hence we obtain the optimal rate of convergence only if $g>0$ is chosen sufficiently large.

\item  Based on the Ruppert-Polyak averaging technique of stochastic approximation, one would expect that  a convergence rate of $1/t$ can be obtained by averaging: 
\[ 
\ODEstate^{\text{RP}}_T 
	 =
  \frac{1}{T}  \int_{0}^T \ODEstate_t \, dt  
\]
where the estimates $\{  \ODEstate_t  \}$ are obtained using the gain in (i).   The preceding sharp bounds imply that averaging results in $1/t$ convergence rate if and only if $\barY = \Zero$.   This condition holds if the noise is  additive,  but appears to fail in general.

\item   The theory is illustrated with applications to gradient-free optimization,  and   policy gradient algorithms for reinforcement learning.

\end{romannum}

\medskip
 \noindent
\textbf{Note:}
This pre-print is written in a tutorial style so it is accessible to new-comers.  It will be a part of a handout for upcoming short courses on RL.   A more compact version suitable for journal submission is in preparation.

\smallskip


\noindent
AMS-MSC: 68T05, 93E35, 49L20

\end{abstract}

\clearpage

\thispagestyle{empty} 
%


\clearpage
\section{Introduction} 
\label{s:intro}


  \sfb{
This section is an early introduction to \Cref{s:SA}, which concerns ODE approximations and algorithm design in a stochastic setting, based on the theory of \textit{stochastic approximation}.  
  \mindex{Quasi-Stochastic Approximation}}

The \textit{ODE method} was coined by Ljung in his 1977 survey of stochastic approximation (SA) techniques for analysis of recursive algorithms \cite{lju77}.    The theory of SA was born more than 25 years earlier with the publication of the work of Robbins and Monro~\cite{robmon51a}, and research in this area has barely slowed in the 70 years since its publication.  
The goal of  SA  is a simple root finding problem:   compute or approximate the vector $\theta^\ocp\in\Re^d$ solving  $\barf(\theta^\ocp) = \Zero$,   in which $\barf\colon\Re^d\to\Re^d$ is defined by an expectation:   
\begin{equation}
\barf(\theta) \eqdef \Expect[f(\theta,\Phi)]\,,\qquad \theta\in\Re^d\,,  
\label{e:barf}
\end{equation} 
where $f\colon\Re^d \times \prstate \to \Re^d$, and $\Phi$ is  a random vector  taking values in a set $\prstate$ (assumed in this paper to be a subset of Euclidean space).  

Given the evolution of this theory, it is useful to reconsider the meaning of the method.   Rather than merely a method for analyzing an algorithm, the ODE method is an approach to algorithm design, broadly described in two steps:
\begin{romannum}
\item[]\textbf{Step 1:}     Design $f$ (and hence $\barf$) so that the following ODE is globally asymptotically stable: 
\begin{equation}
\ddt \odestate_t  = \barf  ( \odestate_t  ).
\label{e:ODE_SA}
\end{equation}

\item[]\textbf{Step 2:}   Obtain a discrete-time algorithm via a ``noisy'' Euler approximation:
\begin{equation} 
\theta_{n+1} = \theta_n +\alpha_{n+1} [ \barf(\theta_n) + \tilXi_n ]  \, , \qquad n\ge 0,
\label{eq:SA0}
\end{equation}
in which $\{\alpha_n\}$ is the non-negative ``gain'' or ``step-size'' sequence, and the sequence $\{ \tilXi_n \}$ has mean that vanishes as $n\to\infty$.
\end{romannum}
Each step may require significant ingenuity.   Step~1 may be regarded as a control problem: design dynamics to reach a desirable equilibrium from each initial condition.   
There are algorithm design questions in Step~2, but in the original formulation of SA and nearly all algorithms that follow,   recursion \eqref{eq:SA0} takes the form  
\begin{equation} 
\theta_{n+1} = \theta_n +\alpha_{n+1} f(\theta_n,\Phi_{n+1})\, , \qquad n\ge 0,
\label{eq:SA}
\end{equation} 
in which  $ \Phi_{n+1}$ has the same distribution as $\Phi$, or the  distribution of $\Phi_{n+1}$ converges to that of $\Phi$ as $n\to\infty$.   A useful approximation   requires assumptions on $f$, the ``noise'' $\Phi_{n+1}$,   and the step-size sequence $\bfma$.   The required assumptions, and the mode of analysis, are  not very different than what is required to successfully apply a deterministic Euler approximation \cite{bor20a}. 

The motivation for the abstract formulation of Step~2 is to reinforce the idea that we are not bound to the traditional recursion.  
 For example,  control variate techniques offer alternatives:  an example is the introduction of the ``advantage function'' in policy gradient techniques for reinforcement learning (RL)
\cite{sutetall00,sutbar18}.  \notes{Probably need to reference others:  \cite{baxbar01}?    Maybe even \cite{jaajorsin94a}}

\sfb{It will be seen in  \Cref{s:SA} that this recursion does approximate the associated ODE:   }

  \sfb{
\textit{For those of you with a background in probability:}
...
variation of the Euler scheme \eqref{e:ODE_Euler},
}

\sfb{ \eqref{e:ODE_Euler}.
}

\sfb{ a deterministic state space model always satisfies the \textit{Markov property} used in Part~2 of this book.    For example,   for given $\omega>0$, the sequence $\Phi_{n+1}= [ \cos(\omega n), \sin(\omega n) ]$  is a Markov chain on the unit circle in $\Re^2$.  
\\
 The upshot of stochastic approximation is that it can be implemented  without knowledge of the function $f$ or of the distribution of $\Phi$; rather, it can rely on observations of the sequence $\{ f(\theta_n,\Phi_{n+1}) \}$. This is one reason why these algorithms are valuable in the context of reinforcement learning.}
\sfb{:  a topic considered in depth in \Cref{s:MC}. }
 
 \notes{
  (RL)~\cite{bor20a,bertsi96a,huachemehmeysur11,devmey17a,devmey17b}.
In such cases, the driving noise is typically modeled as a Markov chain.
}

\notes{boring, and not 'key':
A key observation in this paper is that Markov chains need not be stochastic:    For example,   for given $\omega>0$, the sequence 
$\Phi_{n+1}= [ \cos(\omega n), \sin(\omega n) ]$  is a Markov chain on the unit circle in $\Re^2$.    
}

In much of the SA literature, and especially in the applications considered in this paper,  it is assumed that $\bfPhi$ is independent and identically distributed (i.i.d.), or more generally, it is a \textit{Markov chain}.  
Just as quasi Monte-Carlo algorithms are motivated by fast approximation of integrals,  
the \emph{quasi-stochastic approximation} (QSA) algorithms considered in this paper are designed to speed convergence for root finding problems.   It is useful to pose the algorithm in continuous time:
\begin{equation}
\ddt\ODEstate_t = a_t f(\ODEstate_t,\qsaprobe_t) \,.
\label{e:QSAgen}
\end{equation}
where in the main results we restrict to gains of the form
\begin{equation}
a_t = g/(1+t)^\rho\,,  \qquad \text{ with $0 < \rho\le 1$  and $g>0$}
\label{e:gainQSA}
\end{equation}
The \textit{probing signal}   $\bfqsaprobe$ is generated from a \emph{deterministic} (possibly oscillatory) signal rather than a stochastic process.   
\mindex{Probing signal} 
 Two canonical choices   are the $m$-dimensional mixtures of periodic functions:   
\begin{subequations}
\begin{align}
\qsaprobe _t  &= \sum_{i=1}^K  v^i   [   \phi_i + \omega_i t      ]_{   \text{(mod $1$)}}
\label{e:ProbeSawQSA}
\\
\qsaprobe _t  &= \sum_{i=1}^K  v^i   \sin (2\pi [ \phi_i +  \omega_i t ] )
\label{e:ProbeSinuQSA}
\end{align}%
\label{e:SawSinusoidalQSA}%
\end{subequations}%
for fixed vectors $\{v^i\} \subset\Re^m$, phases $\{\phi_i\}$,   and  frequencies   $\{\omega_i\}$. 
Under mild conditions on $f$ we can be assured of the existence of this limit defining the mean vector field:
\begin{equation}
\barf(\theta) = \lim_{T\rightarrow\infty}\frac{1}{T}\int_0^T f(\theta,\qsaprobe_t)\, dt,\ \ \textrm{for all }\theta\in\Re^d.
\label{e:ergodicA1}
\end{equation}
Our aim is to obtain tight bounds between solutions of \eqref{e:QSAgen}  and 
 \begin{equation}
\ddt \barODEstate_t = a_t \barf(\barODEstate_t)  \,,\qquad t\ge t_0  \,,  \    \barODEstate_{t_0} =   \ODEstate_{t_0}
\label{e:ODE_haSA}
\end{equation}
where the choice of $t_0$ depends on the stability properties of the associated ODE \eqref{e:ODE_SA}  with constant gain.

\paragraph{Contributions} 

It is assumed throughout the paper that \eqref{e:ODE_SA} is globally asymptotically stable, with unique equilibrium denoted  $\theta^\ocp \in\Re^d$.   Convergence of \eqref{e:QSAgen} is also assumed: sufficient conditions based on the existence of a Lyapunov function can be found in  \cite{berchecoldalmehmey19a,berchecoldalmehmey19b}.  This prior work is reviewed in the Appendix, along with a new sufficient condition extending the main result of  \cite{bormey00a}. 

We say that the rate of convergence of  \eqref{e:QSAgen} is $1/t^{\varrho_0}$ if
\begin{equation}
\limsup_{t\to\infty} t^\varrho \| \tilODEstate_t  \|  
	=  \begin{cases}
	\infty  &   \varrho>\varrho_0
	\\
	0    & \varrho< \varrho_0
	  \end{cases}
\label{e:QSArate}
\end{equation}
where  $\tilODEstate_t \eqdef \ODEstate_t - \theta^\ocp $ is the estimation error.      By careful design we can achieve $\varrho_0 =1$, which is optimal in most cases (such as for Monte-Carlo -- see \Cref{s:QMC}).    

Solutions to \eqref{e:ODE_haSA} and \eqref{e:ODE_SA} are related by a time-transformation (see \Cref{t:barODEstate}---a common transformation in the SA literature), so that $\barODEstate_t \to\theta^\ocp$.    Conditions are imposed so the rate of convergence is faster than $1/t$, which justifies the following scaling:    
\begin{equation}
\scerror_t = \frac{1}{a_t}  \bigl(  \ODEstate_t -  \barODEstate_t \bigr) \, ,   \qquad t\ge t_0
\label{e:scaled_error}
\end{equation}
We obtain general conditions under which $\bfscerror$ is bounded and non-vanishing using the gain
\eqref{e:gainQSA}, which then implies the rate of convergence of \eqref{e:QSAgen} is $1/t^\rho$.
A key assumption is that the ODE is smooth near the equilibrium $\theta^\ocp$,   so that there is a well defined \textit{linearization matrix}
 $A^\ocp =\partial\barf\, (\theta^\ocp)$.   
 
Given this background, we are ready to summarize the main results:

\noindent
\textbf{1.} \  
The following dichotomy is established:
 \begin{romannum}
\item  With $\rho<1$ we obtain $1/t^\rho$ rate of convergence, provided  $A^\ocp  $ is Hurwitz, regardless of the value of $g>0$ appearing in \eqref{e:gainQSA}.

\item  With $\rho=1$ we obtain the optimal rate of convergence $1/t$  provided  $I+g A^\ocp  $ is Hurwitz. 
\end{romannum}
In either case, an exact approximation of the scaled error is obtained:
\begin{equation}
  \scerror_t    = \barY  +  \XiI_t   + o(1)  
\label{e:scaledError1}
\end{equation} 
where $\barY\in\Re^d$ is a vector identified in \eqref{e:barY}, and 
\[
 \XiI_t  = \int_0^t   f(\theta^\ocp,\qsaprobe_r) \, dr 
\]
This is assumed bounded in $t$ (justified for natural classes of probing signals in  \Cref{s:quasiMarkov}).

\noindent
\textbf{2.} \   For those well-versed in stochastic approximation theory,  the preceding conclusions would motivate the use of averaging:
 \[
\ODEstate^{\text{RP}}_T  =   \frac{1}{T} \int_0^T     \ODEstate_t\,  dt\,,  \qquad T > 0
\]
where $\{  \ODEstate_t \}$ are obtained using the gain \eqref{e:gainQSA} with $\rho<1$.  The superscript refers to the averaging technique for stochastic approximation introduced independently by Ruppert and Polyak  \cite{rup88,poljud92}.

  In general, this approach \textit{fails}:  the rate of convergence of $\{\ODEstate^{\text{RP}}_T \}$ to $\theta^\ocp$ is $1/T$ if and only if $\barY =\Zero$. A sufficient condition for this is additive noise, for which \eqref{e:QSAgen} becomes
\begin{equation}
\ddt\ODEstate_t = a_t \{ \barf(\ODEstate_t)   +   D(\qsaprobe_t) \}
\label{e:QSAadditiveNoise}
\end{equation}
In this case    
\[
\XiI_t  = \int_0^t   D(\qsaprobe_r) \,, dr 
\]

 \noindent
\textbf{3.} \ 
These theoretical results motivate application to gradient-free optimization,  and policy-gradient techniques for RL.     
Theory and examples are presented in \Cref{s:extremeQSA,s:ActorCriticQSA}.

\paragraph{Literature review}

This paper spans three areas of active research:    

\noindent\textbf{1.  Stochastic approximation}
  \Cref{s:ex} and some of the convergence theory in the appendix  is adapted from \cite{berchecoldalmehmey19a,berchecoldalmehmey19b},  which was inspired by the prior results in  \cite{mehmey09a,shimey11};
\cite{cheberdevmey20} contains applications to gradient-free optimization with constraints.     The first appearance of QSA methods appears to have originated in the domain of quasi-Monte Carlo methods applied to finance; see~\cite{lappagsab90,larpag12}.  

The contributions here are a significant extension of   \cite{berchecoldalmehmey19a,berchecoldalmehmey19b},  which considered exclusively 
  the special case in which the function $f$ is linear, with  $f(\theta,\qsaprobe) = A\theta + B \qsaprobe$ for matrices $A,B$.   The noise is thus additive:  \eqref{e:QSAadditiveNoise} holds with $ D(\qsaprobe_t)  =   B \qsaprobe_t$.   Using the gain \eqref{e:gainQSA} with $\rho=1$, the optimal rate of convergence was obtained under the assumption that  $I+g A  $ is Hurwitz.    
The assumptions on $g$ using $\rho=1$ are stronger than what is imposed in stochastic approximation, which requires that $\half I+g A  $ is Hurwitz.  
On the other hand, the conclusions for stochastic approximation algorithms are weaker:   from \eqref{e:scaledError1}
we obtain
\[
  t^2 \|  \ODEstate_t  - \theta^\ocp \|^2    =   \|  \barY  +  \XiI_t   + o(1)   \|^2   
\]
In the theory of stochastic approximation we must introduce an expectation,  and settle for a much slower rate: 
\begin{equation}
\lim_{t\to\infty} \, t \Expect[\|\ODEstate_t  - \theta^\ocp \|^2 ]   =  \trace( \Sigma_\theta )  
\label{e:avar}
\end{equation}
That is, the rate is $1/\sqrt{t}$ rather than $1/t$:  see \cite{kalmounautadwai20,chedevbusmey20} for refinements of this result and history.

\sfb{
Ruppert-Polyak averaging was introduced independently in \cite{rup88,poljud92}.  However, this work has nothing to do with QSA, but  concerns optimizing the covariance $\Sigma_\theta$ appearing in 
 \eqref{e:avar} for stochastic approximation.}

  The ``ODE@$\infty$'' \eqref{e:barfinfty_QSA}  was introduced in  \cite{bormey00a} for stability verification in stochastic approximation.    \Cref{t:convergenceBM_QSA} is an extension of the Borkar-Meyn Theorem     \cite{bormey00a,bor20a}, which
 has been refined considerably in recent years    \cite{rambha17,rambha18}.

\noindent\textbf{2. Gradient free optimization}  The goal is to minimize a loss function $\Obj(\theta)$ over $\theta\in\Re^d$.   It is possible to observe the loss function at any desired value, but no gradient information is available.  

Gradient-free optimization has been studied in two, seemingly disconnected lines of work:    techniques intended to directly approximate gradient descent through perturbation techniques, known as Simultaneous Perturbations Stochastic Approximation (SPSA),   and    Extremum-Seeking Control (ESC)  which is formulated in a purely deterministic setting.
The contributions of the present paper are motivated by both points of view, but leaning more on the former.    See \cite{EShistory2010,liukrs12,arikrs03} for the nearly century-old history of ESC.
\notes{AD: Latter?}


\notes{AD: Spall coined the term SPSA..}
Theory for SPSA began with the algorithm of  Keifer-Wolfowitz \cite{kiewol52},  which requires at each iteration access to two perturbations per dimension to obtain a stochastic gradient estimate.   
This computational barrier was addressed in the work of Spall \cite{spa87,spa92,spa97}.   
Most valuable for applications in RL is the one-measurement form of SPSA introduced in \cite{spa97}:
this can be expressed in the form \eqref{eq:SA}, in which
\begin{equation}
f(\theta_n,\Phi_{n+1})   =    \Obj(\theta_n + \epsy \Phi_{n+1})\Phi_{n+1}
\label{e:one-meas-SPSA}
\end{equation} 
where $\bfPhi$ is a zero-mean and i.i.d.\ vector-valued sequence.    The
qSGD (quasi-Stochastic Gradient Descent) algorithm  \eqref{e:ES} is a continuous time analog of this approach.

The introduction of \cite{nesspo17} suggests that there is an older history of improvements to SPSA in the Russian literature:   see eqn.~(2) of that paper and surrounding discussion.
Beyond history, the contributions of  \cite{nesspo17} include rates of convergence results for standard and new SPSA algorithms. Information theoretic lower bounds for optimization methods that have access to noisy observations of the true function was derived in \cite{jamnowrec12}. This class of algorithms also has some bandits history \cite{agarwal2010optimal,bubeck2012regret}.

In all of the SPSA literature surveyed above, a gradient approximation is obtained through the introduction of an i.i.d.\ probing signal.  For this reason, the best possible rate is of order $1/\sqrt{n}$.   
The algorithms introduced in the present work are designed to achieve $O(1/n)$ convergence rate for optimization and root-finding problems.

More closely related to the present work is \cite{bhafumarwan03} which treats SPSA using a specially designed class of deterministic probing sequences.  
There are no comparable contributions,  but this previous work motivates further research on rates of convergence for the algorithms proposed.




\noindent\textbf{3. Policy gradient techniques}
These may be regarded as a special case of gradient-free optimization, in which the goal is to minimize average cost for an MDP based solely on input-output measurements. 
The standard dynamic programming formulation for optimal control is replaced with the following architecture:
given a parameterized family of (possibly randomized) state-feedback polices $\{ \fee^\theta  :  \theta\in\Re^d \}$,   the goal is to minimize the associated average cost $\Obj(\theta) = \Expect_\theta[c(X_k,U_k)]$, where the expectation is in steady-state, subject to $U_k = \fee^\theta(X_k)$ for all $k$  (the state description must be extended if the policy is randomized).  
Williams' REINFORCE \cite{wil92} is an early example,   while the most popular algorithms today are of the ``actor-critic'' category in which the approximation of the gradient of $\Obj$ is based on an estimate of a value function.   In the widely cited recent paper \cite{mania2018simple} it is shown that SPSA algorithms such as a modified version of \eqref{e:one-meas-SPSA} can sometimes outperform actor-critic methods.   

  \smallskip
  
\noindent\textbf{Organization}  
\Cref{s:ex} contains some simple examples, introduced mainly to clarify notation and motivation,  much of which is adapted from  \cite{berchecoldalmehmey19a,berchecoldalmehmey19b}.  
The main results are summarized in  \Cref{s:GainQSA}, along with sketches of the proofs.   Applications to 
gradient-free optimization are summarized in \Cref{s:extremeQSA},  and to policy gradient RL in
\Cref{s:ActorCriticQSA}.
\Cref{s:conc} contains conclusions and directions for future research.
The appendix contains three sections:  
\Cref{s:QSAconvergence} concerns convergence theory of  QSA based on   \cite{berchecoldalmehmey19a,berchecoldalmehmey19b}.  The main challenge is to establish boundedness
of trajectories of the algorithm.  A new sufficient condition is established, based on a generalization of the Borkar-Meyn Theorem.   
Ergodic theory for dynamical systems is  required in the main assumptions:  justification for a broad class of ``probing signals'' is contained in \Cref{s:quasiMarkov}.  
\Cref{s:tech}  contains the details of the proofs of the main results.

\section{Simple Examples}
\label{s:ex}

 The following subsections contain examples to illustrate theory of QSA, and also a glimpse at applications.   
\Cref{s:QMC,s:QSARL} are adapted from   \cite{berchecoldalmehmey19a,berchecoldalmehmey19b}.

\subsection{Quasi Monte-Carlo}
\label{s:QMC}

Consider the problem of obtaining the integral over the interval $[0,1]$ of a function $y\colon\Re\to\Re$.  
In a standard Monte-Carlo approach we would draw independent random variables $\{\Phi_{n+1}\}$,  with distribution uniform on the interval $[0,1]$, and then average:
\begin{equation}
\theta_n = \frac{1}{n} \sum_{k=0}^{n-1} y(\Phi_k )
\label{e:MC_standard}
\end{equation}
A QSA analog is described as follows:   the probing signal is the one-dimensional \textit{sawtooth function},  $\qsaprobe_t \eqdef t $ (modulo 1)  and consider the analogous average 
\begin{equation}
\ODEstate_t = \frac{1}{t} \int_0^t  y(\qsaprobe_r )\,  dr
\label{e:MCQSA}
\end{equation}

Alternatively, we can adapt the QSA model \eqref{e:QSAgen} to this example, with
\begin{equation}
f(\theta, \qsaprobe) \eqdef y(\qsaprobe) - \theta.
\end{equation}
The averaged function is then given by   
\begin{align*}
\barf (\theta) =
 \lim_{T\rightarrow\infty}\frac{1}{T}\int_0^T f(\theta,\qsaprobe_t )\, dt 
 = \int_0^1 y(\qsaprobe_t)\, dt  - \theta  
\end{align*}
so that  $\theta^\ocp = \int_0^1 y(\qsaprobe_t) \, dt $ is the unique root of $\barf$.  Algorithm \eqref{e:QSAgen} is given by:
\begin{equation}
 \ddt \ODEstate_t = a_t [y(\qsaprobe_t) - \ODEstate_t ].
 \label{e:QMC}
\end{equation}

\begin{figure}[ht] 
\Ebox{.5}{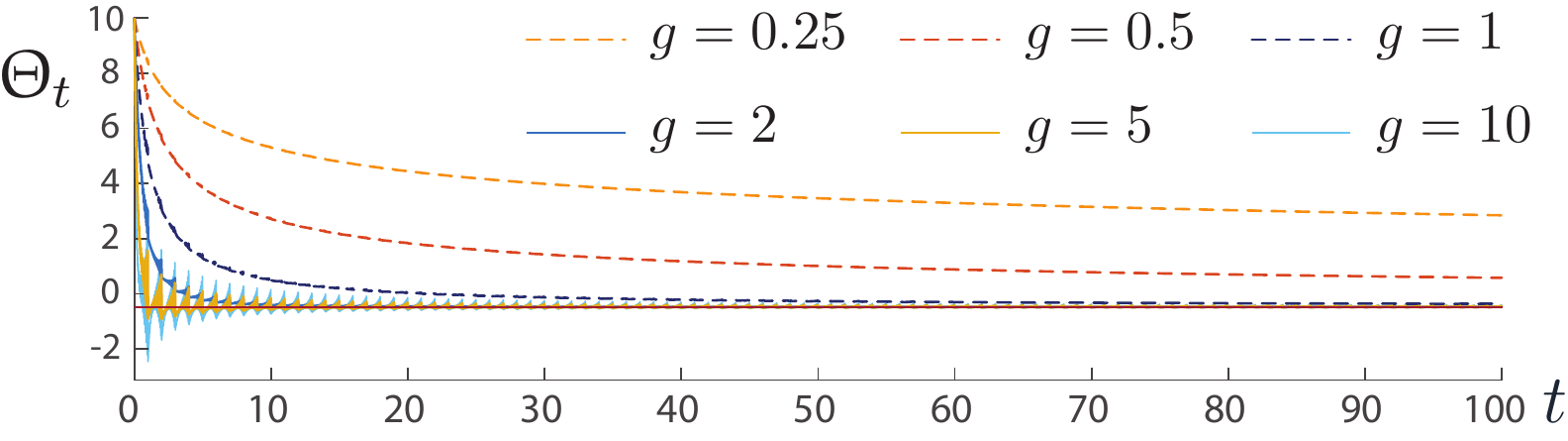} 
\caption{Sample paths of Quasi Monte-Carlo estimates.} 
\vspace{-.5em}
\label{f:plotsQMC}
\end{figure}

This Monte-Carlo approach \eqref{e:MCQSA} can  be transformed into something resembling  \eqref{e:QMC}.   
Taking derivatives of each side of \eqref{e:MCQSA}, we obtain using the product rule of differentiation, and the fundamental theorem of calculus,
\[
 \ddt \ODEstate_t    =    - \frac{1}{t^2}\int_0^t  y(\qsaprobe_r )\,  dr  + \frac{1}{t}  y(\qsaprobe_t )    =  \frac{1}{t} [y(\qsaprobe_t) - \ODEstate_t ]
\]
This is precisely  \eqref{e:QMC} with $a_t = 1/t$   (not a great choice for an ODE design, since it is not bounded as $t\downarrow 0$).

The numerical results that follow are based on   $ y(\theta) =  e^{4t}\sin(100 \theta) $, whose mean is   $\theta^\ocp \approx   -0.5$. The differential equation  \eqref{e:QMC}  was approximated using a standard Euler scheme with sampling interval $10^{-3}$.      Several variations were simulated, differentiated by the gain $a_t=g/(1+t)$.  \Cref{f:plotsQMC}  shows typical sample paths of the resulting estimates for a range of gains,  and common initialization $\ODEstate_0=10$.
In each case, the estimates converge to the true mean  $\theta^\ocp \approx   -0.5$, but convergence is very slow for  $g>0$ significantly less than one.
Recall that   the case $g=1$ is very similar to what was obtained from the
Monte-Carlo approach \eqref{e:MCQSA}.

\notes{This exotic function was among many tested -- it is used here only because the conclusions are particularly striking.  
}

Independent trials were conducted to obtain variance estimates.    In each of $10^4$ independent runs, the common initial condition was drawn from $N(0,10)$,  and the estimate was collected at time $T=100$. 
\Cref{f:hists}  shows three histograms of estimates for standard Monte-Carlo \eqref{e:MC_standard}, and QSA using gains $g=1$ and $2$.   
An alert reader must wonder:   \textit{why is the variance reduced by 4 orders of magnitude when the gain is increased from $1$ to $2$?}  The relative success of the high-gain algorithm is explained in  \Cref{t:Couple_main}.

\begin{figure}[ht]
\Ebox{.85}{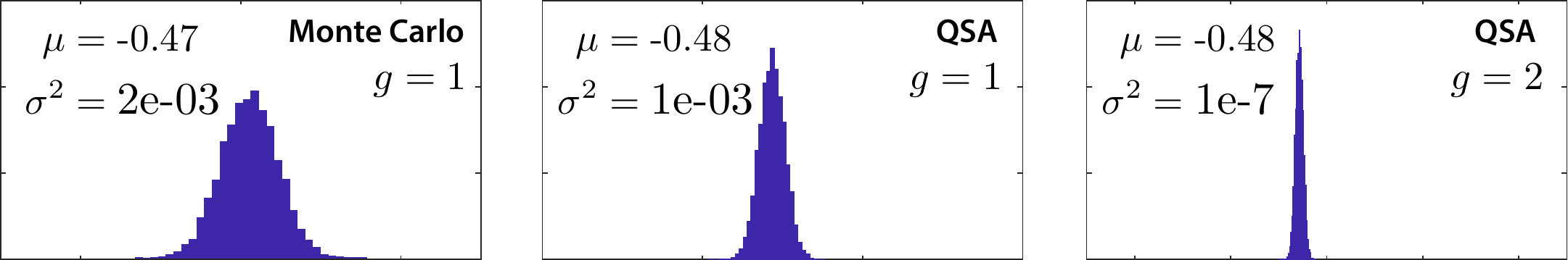} 
\caption{Histograms of Monte-Carlo and Quasi Monte-Carlo estimates after $10^4$ independent runs.   The optimal parameter is $\theta^\ocp \approx   -0.4841$. }  
\label{f:hists}
\end{figure}

\notes{see commented text.   Surely the BM theorem can be used}

%
%

\notes{commented out "Buyer beware" since I believe we can prove the results in the discrete time case.  I have added some comments in the lit review}

\subsection{Constant gain algorithm}

In later sections we will consider the linear approximation:
\begin{equation}
f(\theta,\qsaprobe) = A(\theta - \theta^\ocp)  +  B \qsaprobe
\label{e:QSAlinear}
\end{equation}
This provides insight, and sometimes we can show strong coupling between the linear and nonlinear QSA ODEs.  
We briefly consider here this linear model   in which  $a_t =\alpha$ is constant. 
Then QSA is a time-invariant linear system:
\[
\ddt \ODEstate_t   = \alpha [ A\tilODEstate_t + B \qsaprobe_t  ] \,,\qquad \tilODEstate_0=\tiltheta_0 
\]
where  $\tilODEstate_t \eqdef \ODEstate_t - \theta^\ocp $ is the error at time $t$.
For this simple model we can  solve the ODE when the probing signal is the mixture of sinusoids \eqref{e:ProbeSinuQSA}.

A linear system satisfies the \textit{principle of super-position}.    To put this to work,   consider the probing signal  \eqref{e:ProbeSinuQSA},   and for each $i$,  consider the ODE  
\[
\ddt \tilODEstate_t^i   = \alpha \bigl[ A\tilODEstate_t +  B v^i   \sin (2\pi [ \phi_i +  \omega_i t  ] ) \bigr)  \,,\qquad \tilODEstate_0^i =0  
\]
The principle states that the solution to the ODE is the sum:
\begin{equation}
\tilODEstate_t   =  e^{\alpha A t}  \tiltheta_0   + B\sum_{i=1}^K \tilODEstate_t^i  
\label{e:ODEstateLinear_soln}
\end{equation}
We see that the response to the initial error $\tiltheta_0 = \theta_0 - \theta^\ocp$ decays to zero exponentially quickly. Moreover, to  understand the steady-state behavior of the algorithm it suffices to fix a single value of $i$.

\notes{SM and SC both don't understand SM's statement: 
For more complex probing signals we can again justify consideration of sinusoids, provided we can justify  a Fourier series approximation.
}

Let's keep things simple, and stick to sinusoids.  And it is much easier to work with complex exponentials:  
 \[
\ddt \tilODEstate_t  = \alpha [ A\tilODEstate_t +  B v  \exp(j \omega t)   ]   \,,\qquad \tilODEstate_0=0  
\]
with $\omega\in\Re$ and $v\in\Re^d$  (dropping the scaling $2\pi$ for simplicity, and the phase $\phi$ is easily returned by a time-shift).   We can express the solution as a convolution:
\[
\begin{aligned}
 \tilODEstate_t  &= \alpha   \int_0^t  \exp\bigl( \alpha A r  \bigr)   B v  \exp\bigl(  j \omega (t-r)  \bigr) \,  dr
   \\
            & =  \alpha  \Bigl(   \int_0^t  \exp\bigl( [\alpha A  -j \omega  I] r  \bigr) \,  dr  \Bigr)  B  v  \exp\bigl(  j \omega t \bigr) 
     \end{aligned} 
\]
Writing $D= [\alpha A  -j \omega  I] $, the integral of the matrix exponential is expressed,  
\[
  \int_0^t  e^{D r} \, dr = D^{-1}  \bigl[e^{D t} -  I \bigr] 
\]
Using linearity once more,  and the fact that the imaginary part of $e^{j\omega t}$ is $\sin (\omega t)$, we arrive at a complete representation for  \eqref{e:ODEstateLinear_soln}: 
\begin{proposition}
\label[proposition]{t:constantGainQSAsoln}
Consider the  linear model with $A$ Hurwitz, and probing signal  \eqref{e:ProbeSinuQSA},  for which the constant-gain QSA algorithm has the solution \eqref{e:ODEstateLinear_soln}.    
Then $\tilODEstate_t^i  = \alpha \Upgamma_t^i    v^i  $ for each $i$ and $t$, with
\begin{equation}
\Upgamma_t^i   =     \mathrm{Im} \Bigl(   [\alpha A  -j \omega  I] ^{-1}   \bigl[  \exp\bigl( \alpha At  \bigr)  -  \exp\bigl(  2\pi j [ \phi_i + \omega_i t]  \bigr)  I \bigr]\Bigr)   
\label{e:constantGainQSAsoln}
\end{equation}
\end{proposition}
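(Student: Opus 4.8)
The plan is to turn the computation sketched just before the statement into a proof via three short moves, flagging the one step that needs care. \textbf{Reduction:} because the vector field \eqref{e:QSAlinear} is affine in $\tilODEstate$ and, under \eqref{e:ProbeSinuQSA}, the forcing is a finite sum of pure sinusoids, the superposition identity \eqref{e:ODEstateLinear_soln} reduces the claim to one frequency at a time: it suffices to solve, for each fixed $i$,
\[
\ddt \tilODEstate_t^i = \alpha[A\tilODEstate_t^i + v^i\sin(2\pi[\phi_i+\omega_i t])],\qquad \tilODEstate_0^i = 0,
\]
and to show the solution is $\alpha\Upgamma_t^i v^i$ with $\Upgamma_t^i$ as in \eqref{e:constantGainQSAsoln}. (The matrix $B$ sits outside the sum in \eqref{e:ODEstateLinear_soln}, so the per-mode forcing is $v^i\sin(\cdot)$; existence and uniqueness are automatic for this linear ODE with bounded continuous forcing; and the homogeneous term $e^{\alpha A t}\tiltheta_0$ is already recorded separately in \eqref{e:ODEstateLinear_soln}.)

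\textbf{Complexification and Duhamel:} writing each sinusoid as the imaginary part of a complex exponential, let $\bftheta_t$ solve $\ddt\bftheta_t=\alpha[A\bftheta_t+v^i e^{\,2\pi j[\phi_i+\omega_i t]}]$ with $\bftheta_0=0$. Since $A$, $v^i$ and $\alpha$ are real, $\mathrm{Im}(\bftheta_t)$ solves the real ODE above with the same zero initial value, so by uniqueness $\tilODEstate_t^i=\mathrm{Im}(\bftheta_t)$. Variation of constants gives
\[
\bftheta_t=\alpha\int_0^t e^{\alpha A r}v^i\,e^{\,2\pi j[\phi_i+\omega_i(t-r)]}\,dr
=\alpha\, e^{\,2\pi j[\phi_i+\omega_i t]}\Bigl(\int_0^t e^{(\alpha A-2\pi j\omega_i I)r}\,dr\Bigr)v^i,
\]
pulling the $r$-independent factor $e^{\,2\pi j[\phi_i+\omega_i t]}$ out of the integral and absorbing $e^{-2\pi j\omega_i r}$ into the exponent (legitimate because $-2\pi j\omega_i I$ commutes with $\alpha A$).

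\textbf{Matrix integral and imaginary part:} with $D\eqdef\alpha A-2\pi j\omega_i I$, the Hurwitz property of $A$ places every eigenvalue of $\alpha A$ in the open left half-plane, so $D$ is invertible and $\int_0^t e^{Dr}\,dr=D^{-1}(e^{Dt}-I)$. Substituting, and using $e^{Dt}=e^{\alpha A t}e^{-2\pi j\omega_i t}$ to recombine with the prefactor, $\bftheta_t$ takes the form $\alpha$ times an explicit complex matrix of the shape $D^{-1}[\,\cdot\,]$ applied to the real vector $v^i$; since $\mathrm{Im}(Mv^i)=\mathrm{Im}(M)v^i$ for real $v^i$, taking imaginary parts produces $\tilODEstate_t^i=\alpha\Upgamma_t^i v^i$ with $\Upgamma_t^i$ of the form \eqref{e:constantGainQSAsoln} (reading the symbol $\omega$ there as the angular frequency $2\pi\omega_i$).

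\textbf{Where care is needed:} none of the three moves is deep; the real work is bookkeeping. The delicate point is tracking how the forcing factor $e^{\,2\pi j[\phi_i+\omega_i t]}$ recombines with $e^{Dt}=e^{\alpha A t}e^{-2\pi j\omega_i t}$---this is what collapses the product back to a matrix exponential in $\alpha A t$ and yields the bracket $e^{\alpha A t}-e^{\,2\pi j[\phi_i+\omega_i t]}I$, up to a constant phase that must be reconciled against the normalization in \eqref{e:constantGainQSAsoln}---together with keeping $\mathrm{Im}(\cdot)$ wrapped around the complex matrix $D^{-1}[\,\cdot\,]$ while letting it pass freely through right-multiplication by the real vector $v^i$ and through the integral. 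I would carry the $2\pi$ factors and the phases $\phi_i$ explicitly throughout, rather than ``dropping the $2\pi$'' as in the heuristic derivation, and only then read off \eqref{e:constantGainQSAsoln}.
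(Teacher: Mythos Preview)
Your proposal is correct and follows essentially the same route as the paper: superposition to reduce to a single frequency, complexification of the sinusoid, Duhamel's formula, the closed-form integral $\int_0^t e^{Dr}\,dr=D^{-1}(e^{Dt}-I)$, and then taking imaginary parts. Your bookkeeping is in fact more careful than the paper's sketch---you correctly flag that the bare $\omega$ in \eqref{e:constantGainQSAsoln} must be read as $2\pi\omega_i$, and you track the phase $\phi_i$ explicitly rather than appealing to a time shift.
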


\Cref{t:constantGainQSAsoln}  illustrates a challenge with fixed gain algorithms:   if we want small steady-state error,  then we require small $\alpha$  (or large $\omega_i$, but this brings other difficulties for computer implementation---\textit{never forget Euler!}).   However, if $\alpha>0$ is very small, then the impact of the initial condition in \eqref{e:ODEstateLinear_soln}  will persist for a long time.    

The  Ruppert-Polyak averaging technique  can be used to improve the steady-state behavior---more on this can be found in \Cref{s:GainQSA} for vanishing-gain algorithms.  It is easy to illustrate the value for the special case considered here.    One form of the   technique is to simply average some fraction of the estimates:
\begin{equation}
\ODEstate^{\text{RP}}_T  
\eqdef
  \frac{1}{T-T_0}  \int_{T_0}^T \ODEstate_t \, dt
\label{e:QSAgenRPpre}
\end{equation} 
For example,  $T_0 = T -  T/5$ means that we average the final 20\%.

\begin{corollary}
\label[corollary]{t:constantGainQSAsolnRP}
Suppose that the assumptions of \Cref{t:constantGainQSAsoln} hold, so in particular $f$ is linear.  
Consider the averaged estimates \eqref{e:QSAgenRPpre}
in which $  T_0= T-T/K $ for fixed $K>1$.   
Then,
\[
 \ODEstate^{\text{RP}}_T    =    \theta^\ocp  +    M_T  \theta_0   +B \sum_{i=1}^K     \ODEstate^{\text{RP}\, i}_T 
\]
where
\[
M_T =    \frac{K}{T}   \alpha^{-1} A^{-1} \Bigl[ \exp\bigl(\alpha A T \bigr)  -   \exp\bigl(\alpha A T_0  \bigr)   \Bigr]
\]
and  $ \ODEstate^{\text{RP}\, i}_T  =  \alpha \Upgamma_t^{\text{RP}\, i}  v^i  $ for each $i$ and $t$, with
 $\Upgamma_T^{\text{RP}\, i} $  equal to the integral of  $\Upgamma_t^i $ appearing in 
\eqref{e:constantGainQSAsoln}:
\[
\begin{aligned} 
\Upgamma_T^{\text{RP}\, i}     &=  \frac{K}{T}      \mathrm{Im} \Bigl(  [\alpha A]^{-1}   [\alpha A  -j 2\pi\omega_i  I] ^{-1}  \Bigl[    \exp\bigl( \alpha AT  \bigr)  -     \exp\bigl( \alpha AT_0  \bigr) \Bigr] \Bigr)   
\\
&\quad
	+      \frac{K}{T}   \mathrm{Im}   \Bigl(   \frac{j}{2\pi\omega_i}   [\alpha A  -j 2\pi\omega_i  I] ^{-1}  \Bigl[  
 \exp\bigl( 2\pi [ \phi_i +  \omega_i T ] j \bigr)  -   \exp\bigl( 2\pi [ \phi_i +  \omega_i T _0] j  \bigr)   \Bigr]   \Bigr)
 \end{aligned} 
\]
Hence,  $ \ODEstate^{\text{RP}}_T $ converges to $   \theta^\ocp$ at rate $1/T$.  \qed
\end{corollary}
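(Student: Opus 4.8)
The plan is to reduce everything to two elementary integrals by averaging the closed‑form solution of \Cref{t:constantGainQSAsoln}. First I would write $\ODEstate_t = \theta^\ocp + \tilODEstate_t$, insert the representation \eqref{e:ODEstateLinear_soln} together with $\tilODEstate_t^i = \alpha\Upgamma_t^i v^i$, and use $T - T_0 = T/K$ to get
\[
\ODEstate^{\text{RP}}_T = \theta^\ocp + \frac{K}{T}\Bigl(\int_{T_0}^T e^{\alpha A t}\,dt\Bigr)(\theta_0 - \theta^\ocp) + B\sum_{i=1}^K \frac{K}{T}\Bigl(\int_{T_0}^T \alpha\Upgamma_t^i\,dt\Bigr) v^i .
\]
Everything then comes down to evaluating $\int_{T_0}^T e^{\alpha A t}\,dt$ and $\int_{T_0}^T \Upgamma_t^i\,dt$.

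Since $A$ is Hurwitz, $\alpha A$ is invertible, so $\int_{T_0}^T e^{\alpha A t}\,dt = (\alpha A)^{-1}\bigl[e^{\alpha A T} - e^{\alpha A T_0}\bigr]$, and the $K/T$ prefactor turns this into exactly the matrix $M_T$ of the statement (acting on the initial error $\theta_0 - \theta^\ocp$, which is what the statement abbreviates as $M_T\theta_0$). For the oscillatory part I would substitute \eqref{e:constantGainQSAsoln}; because $t$ is a real integration variable, both $\mathrm{Im}(\cdot)$ and the constant matrix $[\alpha A - j 2\pi\omega_i I]^{-1}$ commute with $\int_{T_0}^T(\cdot)\,dt$, so it remains only to integrate the two exponentials inside the bracket. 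The first is handled as above; the second, $-e^{2\pi j[\phi_i + \omega_i t]}$, integrates to $\tfrac{j}{2\pi\omega_i}\bigl[e^{2\pi j[\phi_i + \omega_i T]} - e^{2\pi j[\phi_i + \omega_i T_0]}\bigr]$ using $-1/j = j$. Reinserting $K/T$ and collecting terms reproduces, line for line, the displayed expression for $\Upgamma_T^{\text{RP}\,i}$, and hence $\ODEstate^{\text{RP}\,i}_T = \alpha\Upgamma_T^{\text{RP}\,i} v^i$.

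For the rate, observe that $M_T$ is $O(e^{-cT_0}/T)$ for some $c > 0$ because $A$ is Hurwitz and $T_0 = (1 - 1/K)T \to \infty$, so it is $o(1/T)$; and each $\Upgamma_T^{\text{RP}\,i}$ is $K/T$ times a quantity whose norm is bounded uniformly in $T$ (fixed matrix inverses, complex exponentials of modulus one, and $\|e^{\alpha A T}\|, \|e^{\alpha A T_0}\|$ bounded for $T$ large), hence $O(1/T)$. Summing the finitely many contributions gives $\|\ODEstate^{\text{RP}}_T - \theta^\ocp\| = O(1/T)$, the claimed rate.

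I do not expect a real obstacle: the argument is entirely bookkeeping. The only points needing care are (i) justifying that $\mathrm{Im}(\cdot)$ and the constant complex matrix pass through the integral over $t$, which is immediate from linearity; (ii) tracking the factors of $j$ and $2\pi\omega_i$ generated by integrating $e^{2\pi j[\phi_i + \omega_i t]}$ so that all signs agree with the stated formula; and the single structural input, invertibility of $\alpha A$ (needed for $M_T$ and for the first bracket of $\Upgamma_T^{\text{RP}\,i}$), which is exactly what ``$A$ Hurwitz'' buys.
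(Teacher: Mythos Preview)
Your proposal is correct and matches the paper's approach: the corollary is stated with a \qed and no separate proof, so the intended argument is precisely what you wrote---average the closed-form \eqref{e:ODEstateLinear_soln} term by term over $[T_0,T]$ and evaluate the two exponential integrals. Your observation that the formula should read $M_T(\theta_0-\theta^\ocp)$ rather than $M_T\theta_0$ is also right; the paper's statement is a slight abuse of notation inherited from $\tiltheta_0$ in \eqref{e:ODEstateLinear_soln}.
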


\subsection{Application to policy iteration}   
\label{s:QSARL}

Consider the nonlinear state space model in continuous time,
\[
\ddt x_t = \Dcs(x_t,u_t) \,, \qquad t\ge 0\,  
\] 
with $x_t\in\Re^n$,  $u_t\in\Re^m$.   Given a cost function $c\colon\Re^{n+m}\to \Re$, our goal is to approximate the optimal value function
\[
J^\oc(x) = \min_\bfmu \int_0^\infty c(x_t, u_t   )\, dt\,, \qquad x=x_0
\]
and approximate the optimal policy.   For this we first explain how policy iteration extends to the continuous time setting.

For any feedback law $u_t = \fee(x_t)$,   denote the associated value function by
\[
J^\fee(x) =  \int_0^\infty c(x_t,\fee(x_t  )   )\, dt\,, \qquad x=x_0. 
\]
This solves a dynamic programming equation:
\sfb{It follows from \Cref{t:ODEconverse} that this }
 \[
 0= c(x,\fee(x)) + \nabla J^\fee\, (x)\cdot \Dcs(x,\fee(x) ) 
\]
The policy improvement step in this continuous time setting defines the new policy as the minimizer:
\[
\fee^+(x) \in \argmin_u \{  c(x, u ) + \nabla J^\fee\, (x)\cdot \Dcs(x,  u )  \}
\]
Consequently, approximating the term in brackets is key to approximating PIA.

An RL algorithm is constructed through the following steps.  First,   add $J^\fee$ to each side of the fixed-policy dynamic programming equation:
\[
 J^\fee\, (x) =    J^\fee\, (x)   + c(x,\fee(x)) + \nabla J^\fee\, (x)\cdot \Dcs(x,\fee(x) ) 
\]
The right-hand side motivates the following definition of the fixed-policy Q-function:   
\[
Q^\fee(x,u) = J^\fee(x) + c(x, u) +\Dcs(x,u )\cdot \nabla J^{\fee}\, (x).
\]
The policy update can be equivalently expressed $\fee^+(x) =\argmin_u Q^\fee(x,u)$,  and 
this Q-function solves the fixed point equation
\begin{align}\label{eq:Q:pi}
Q^\fee(x,u) = \uQ^\fee(x) + c(x, u) +\Dcs(x,u )\cdot \nabla \uQ^\fee\, (x)
\end{align}
where $\uH^\fee(x) = H(x,\fee(x))$ for any function $H$  (note that this is a substitution, rather than the minimization appearing in   Q-learning).

\sfb{\eqref{e:QTD}).  
}

Consider now a family of functions $Q^\theta  $ parameterized by $\theta$, and define the Bellman error for a given parameter as
\begin{align}
\begin{split}
\clE^\theta(x,u)  = -Q^{\theta} (x,u)+ \uQ^{\theta} (x) + c(x,u) +  \Dcs(x,u)\cdot \nabla \uQ^{\theta}\, (x)  
\end{split}
\label{e:BEQ}
\end{align} 
 A model-free representation is obtained, on recognizing that for   any state-input pair $(x_t,u_t)$, 
\begin{equation}
\begin{aligned}
\clE^\theta(x_t,u_t) & = -Q^\theta  (x_t,u_t)+ \uQ^\theta  (x_t)   + c(x_t,u_t)  +  \ddt \uQ^\theta  (x_t)
\end{aligned} 
\label{e:BEonline}
\end{equation} 
The error $\clE^\theta(x_t,u_t) $ can be observed without knowledge of the dynamics $\Dcs$ or even the cost function $c$.   
The goal is to find $\theta^\ocp$ that minimizes the mean square error:
\begin{equation}
\|\clE^\theta\|^2
\eqdef
\lim_{T\to\infty}
\frac{1}{T} \int_0^T  \bigl[
\clE^\theta(x_t,u_t)  \bigr]^2\, dt.
\label{e:ergodicBellman}
\end{equation}
We choose a feedback law with ``exploration'':   \sfb{, of the form introduced in \Cref{s:banditExplore}:}
\begin{equation}
u_t = \feex(x_t,\qsaprobe_t)
\label{s:QSAfeedback}
\end{equation}
chosen so that the resulting state trajectories are bounded for each initial condition, and that the joint process $(\bfmx,\bfmu,\bfqsaprobe)$ admits an ergodic steady state.

Whatever means we use to obtain the minimizer,  this approximation technique defines an approximate version of PIA:   
given a policy $\fee$ and  approximation   $Q^{\theta^\ocp}$, the policy is updated:
\begin{equation}\label{eq.policy.update}
\fee^+(x) =\argmin_u Q^{\theta^\ocp}(x,u)
\end{equation}
This procedure is repeated to obtain a recursive algorithm.
\mindex{Policy iteration!Approximate}

\paragraph{Least squares solution}

Consider for fixed $T$ the loss function
\[
L_T(\theta) = \frac{1}{T} \int_0^T  \bigl[
\clE^\theta(x_t,u_t)  \bigr]^2\, dt
\]
If the function approximation architecture is linear,
\begin{equation}
Q^\theta  (x,u) =  d(x,u) + \theta^\transpose \psi(x,u)\,,\quad \theta\in\Re^d. 
\label{e:Qtheta0}
\end{equation}
in which $d\colon\state\times\ustate\to\Re$.  Then  $L_T$ is a quadratic function of $\theta$:
\[
L_T(\theta) =    \theta^\transpose M \theta - 2 b^\transpose \theta  +  L_T(\Zero) =
 (\theta-\theta^\ocp)^\transpose M  (\theta-\theta^\ocp)  +  L_T(\Zero) 
\]
We leave it to the reader to find   expressions for $M$, $b$, and $L_T(\Zero) $.

In this special case we do not need   gradient descent techniques:  the matrices $M$ and $b$ can be represented as Monte-Carlo,   as surveyed in \Cref{s:QMC}, and then $\theta^\ocp = M^{-1} b$.

\paragraph{Gradient descent}

The first-order condition for optimality is expressed as  a root-finding problem:  
$\nabla_\ODEstate\|\clE^\theta\|^2 = 0$,  and the standard gradient descent algorithm in ODE form is 
\[
\ddt \odestate_t =   - \half a \nabla_\theta  \|\clE^{ \odestate_t } \|^2  = -a    \clE^{ \odestate_t }   \nabla_\theta  \clE^{ \odestate_t } 
\]
with $a>0$.   This is an ODE of the form \eqref{e:ODE_SA},  whose QSA counterpart \eqref{e:QSAgen} is the 
QSA steepest descent algorithm,   
\begin{equation}
\begin{aligned}
\ddt \ODEstate _t &= - a_t \clE^{\ODEstate_t} (x_t,u_t) \zeta^{\ODEstate_t}_t 
\\
\elig_t^\theta & \eqdef \nabla_\theta \clE^\theta (x_t,u_t) 
\end{aligned}
\label{e:Q}
\end{equation}
 Where, based on \eqref{e:BEonline} we can typically swap derivative with respect to time and derivative with respect to $\theta$ to obtain
\[
\nabla_\theta 
\clE^\theta(x_t,u_t)   = -\nabla_\theta Q^\theta  (x_t,u_t)+  \Bigl\{ \nabla_\theta  Q^\theta  (x_t,\fee(x_t) )     +  \ddt \nabla_\theta  Q^\theta  (x_t,\fee(x_t))   \Bigr\}  
\]

\smallskip

The QSA gradient descent algorithm \eqref{e:Q} is best motivated by a nonlinear function approximation, but it is instructive to see how the ODE simplifies for the  
the linearly parameterized family \eqref{e:Qtheta0}.  We have in this case
\[
\elig_t =  - \psi(x_t,u_t) +  \psi( x_t,\fee(x_t) )   + \ddt \psi( x_t,\fee(x_t) )
\]
and $\clE^\theta (x_t,u_t) = b_t+\elig_t^\transpose \theta$ using
\[
b_t  =  c(x_t, u_t) - d(x_t, u_t) + d(x_t, \fee(x_t))   + \ddt d(x_t, \fee(x_t))  
\]
so that \eqref{e:Q} becomes
\begin{equation}
\begin{aligned}
\ddt \ODEstate_t &= - a_t \left[\elig_t \elig_t^\transpose \,\ODEstate_t + b_t \elig_t \right]
\\
\end{aligned}
\label{e:Q:fixed:policy}
\end{equation}

The convergence of~\eqref{e:Q:fixed:policy} may be very slow if the matrix 
\begin{equation}
G \eqdef \lim_{t\to\infty} \frac{1}{t}\int_{0}^t \zeta_\tau\zeta_\tau^\transpose \, d \tau
\label{e:G}
\end{equation}
 has   eigenvalues close to zero  (see \Cref{t:Fast_rho} for a simple explanation).    This can be resolved through the introduction of a larger gain $\bfma$, or a matrix gain.  One approach is to estimate $G$ from data and invert:  
\begin{subequations}
\label{e:fixed:q:matrix}
\begin{align} 
\haG_t &= \frac{1}{t}\int_{0}^t \zeta_\tau\elig_\tau^\transpose \, d \tau,\quad \qquad 0\le t\le T \\
\ddt \ODEstate_t &= - a_t\haG_T^{-1}\left[\elig_t \elig_t^\transpose \,\ODEstate_t + b_t \elig_t \right]  ,\,   \qquad t\ge T 
\label{e:Q:fixed:policy:matrix}
\end{align}
\end{subequations}
This might be motivated by the ODE approximation
\[
\ddt \odestate = - a\{ \odestate  -\theta^\ocp \}
\]
This idea is the motivation for Zap SA and Zap Q-learning \cite{chedevbusmey20b,dev19}.

\begin{figure}[thb]

	\Ebox{.5}{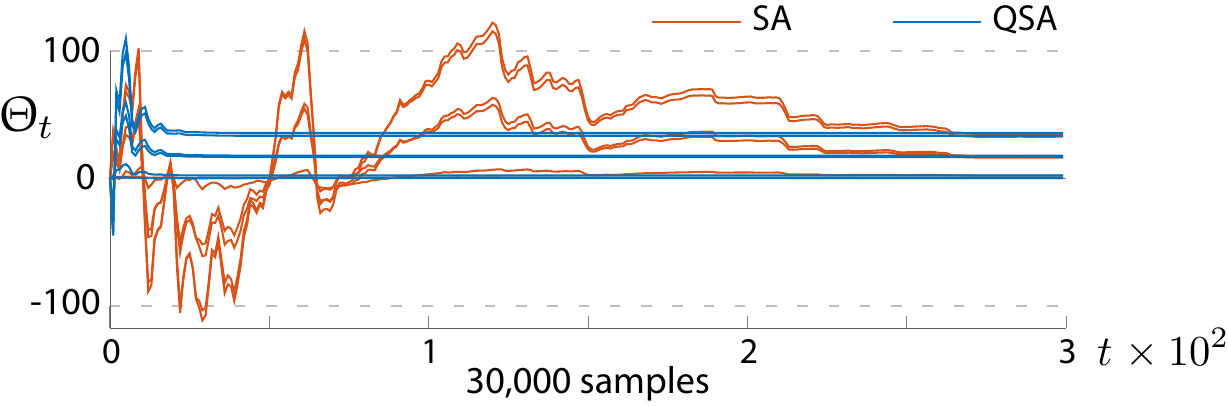}
	
	\caption{   Comparison of QSA and Stochastic Approximation (SA) for policy evaluation.   
	}
	\label{fig.qsa.PI} 
	\vspace{-.5em}
\end{figure}

\paragraph{Numerical example} 

Consider the LQR problem in which $\Dcs(x,u) = Ax+Bu$,  and $c(x,u) = x^\transpose Mx + u^\transpose Ru$, with   $M\ge 0$ and $R>0$.   Given the known structure of the problem, we know that the function $Q^\fee$ associated with any linear policy $\fee(x) = Kx$, takes the form
\[
Q^\fee = \begin{bmatrix} x \\ u  \end{bmatrix}^\transpose \left(
\begin{bmatrix} M & 0\\0 & R  \end{bmatrix}
 + 
 \begin{bmatrix} A^\transpose P + P A  + P   & PB \\
 B^\transpose P & 0  \end{bmatrix}
\right)
\begin{bmatrix} x \\ u  \end{bmatrix}
\]
where $P$ solves the Lyapunov equation 
\[
A^\transpose P + PA + K^\transpose R K + Q = 0
\]
 This motivates a quadratic basis, which for  the special case $n=2$ and $m=1$ becomes 
\[
\{\psi_1,\dots,\psi_6\}
=
\{
x_1^2,x_2^2, x_1 x_2, x_1 u, x_2 u, u^2\}
\]
and there is no harm in setting $d(x,u) \equiv 0$.


In order to implement the  algorithm~\eqref{e:Q:fixed:policy:matrix} we begin with selecting an input of the form
\begin{align}\label{e:offpolicy}
u_t = K_0 x_t + \qsaprobe_t
\end{align}
where $K_0$ is a stabilizing controller and $\qsaprobe_t =\sum_{j=1}^q v^j \sin(\omega_j t + \phi_j)$. Note that $K_0$ need not be the same $K$ whose value function we are trying to evaluate. 

\notes{Say at the start that we are only looking for  a stationary point in general, but here we end up with a simple quadratic objective, no?
\\
Then we run~\eqref{e:Q:fixed:policy:matrix} to obtain $\ODEstate^\star$ which minimizes the Bellman error~\eqref{e:ergodicBellman}. 
}

\begin{wrapfigure}[11]{L}{0.3\textwidth}
\Ebox{1}{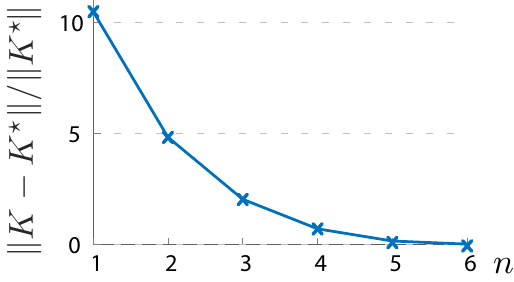}
\caption{Iterations of PIA}
\label{f:PIA_CDC2019} 
\end{wrapfigure}
The numerical results that follow are based on a double integrator with friction:
\[
\ddot y = -0.1 \dot y + u
\]
which can be expressed in state space form using $x = (y,\dot y)^\transpose$:
\begin{equation}\label{e:lin}
\dot x = \begin{bmatrix} 0 & 1\\0 & -0.1 \end{bmatrix} x + \begin{bmatrix}0\\1\end{bmatrix} u  
\end{equation}
We took a relatively large cost on the input: 
\[
 M = I\, \qquad R = 10 
 \]
and gain $a_t = 1/(1+t)$.

\Cref{fig.qsa.PI} shows the evolution of the QSA algorithm for the evaluation of the policy $K=[-1,0]$ using the stabilizing controller $K_0 = [-1,-2]$ and $\qsaprobe$ in~\eqref{e:offpolicy} as the sum of 24 sinusoids with random phase shifts and whose frequency was sampled uniformly between $0$ and $50$ rad/s. The QSA algorithm is compared with the related SA algorithm in which $\qsaprobe$ is ``white noise'' instead of a deterministic signal\footnote{For implementation, both~\eqref{e:fixed:q:matrix} and the linear system~\eqref{e:lin} were approximated using   Euler's method, with time-step of 0.01s.}

  \Cref{f:PIA_CDC2019} shows the weighted error for the feedback gains obtained using the approximate  policy improvement algorithm~\eqref{eq.policy.update} and the optimal controller $K^\star$ (which can be easily computed for an LQR problem). Each policy evaluation was performed by the model-free algorithm~\eqref{e:fixed:q:matrix}.
The PIA algorithm indeed converges to the optimal control gain $K^\star$.
\notes{Revise}

\section{Main Results}
\label{s:GainQSA}

\subsection{Assumptions}

The  following assumptions are imposed throughout the remainder of the paper: 
\begin{romannum}  
 
\item[\textbf{(A1)}] The process $\bfma$ is non-negative, monotonically decreasing, and  
\begin{equation}
\lim_{t\to\infty} a_t = 0,\qquad\int_0^\infty a_r\, dr = \infty.
\label{e:QSA_A5}
\end{equation} 

\item[\textbf{(A2)}]  
The functions $\barf$ and $f$ are Lipschitz continuous:  for   a constant $\Lip_f  <\infty$,
\begin{align*} 
\|\barf(\theta') - \barf(\theta)\| &\le \Lip_f \|\theta' - \theta\|, 
\\
\|f(\theta',\qsaprobe) - f(\theta,\qsaprobe)\| &\le \Lip_f \|\theta' - \theta\|,\quad \theta', \, \theta\in\Re^d\,, \ \qsaprobe\in\prstate 
\end{align*} 
There exists a constant $b_0<\infty$, such that for all $\theta\in\Re^d$, $T>0$,
\begin{equation}
\left\|\frac{1}{T}\int_0^T f(\theta,\qsaprobe_t)\, dt - \barf(\theta)\right\| \le \frac{b_0}{T}(1+\|\theta\|) 
\label{e:pre-ProbeErgodic}
\end{equation}

\item[\textbf{(A3)}] The ODE \eqref{e:ODE_SA} has a globally asymptotically stable equilibrium   $\theta^\ocp$.

\end{romannum}
The process $\bfma$ in (A1) is a continuous time counterpart of the standard step size schedules in stochastic approximation.  
 \notes{maybe give a forward reference, and maybe not: except that we impose monotonicity in place of square integrability.}
 The Lipschitz condition in (A2) is standard,  and  \eqref{e:pre-ProbeErgodic} is
 only slightly stronger than ergodicity of $\bfqsaprobe$ as given by \eqref{e:ergodicA1}. 
 General sufficient conditions on both $f$ and $\bfqsaprobe$ for the ergodic bound \eqref{e:pre-ProbeErgodic}
are given in  \Cref{t:expProbeErgodic}.
Assumptions~(A1) and (A2), along with a strengthening of (A3),  imply convergence of \eqref{e:QSAgen} to $\theta^\ocp$:  see   \Cref{s:QSAconvergence}.


\sfb{See covariance discussion commented out}

\sfb{ This theory is fully developed in \Cref{s:SA_Sigma}, along with a   formula for the covariance matrix $\Sigma_\theta$.}

\medskip

\sfb{REVISE for book!  The challenge in the nonlinear case is ensuring  (A3)---that  \eqref{e:ODE_SA} is globally asymptotically stable---and that the matrix $I + A$ is Hurwitz for the linearization.    
In general this requires modification of the ODE, such as the introduction of   a matrix gain.
One approach discussed in \Cref{s:ZapQSA} 
 is similar to the Newton-Raphson flow, so we expect that this will help to improve transient behavior,  as well as ensure the optimal rate of convergence.
An  approach called Ruppert-Polyak averaging is discussed in
\Cref{s:GainQSA}.  This is far simpler,  but requires (A3).
}

We henceforth assume  that \eqref{e:QSAgen} is convergent, and turn to identification of the convergence rate.   This requires a slight strengthening of (A2):
\begin{romannum}  
\item[\textbf{(A4)}]
 The vector field $\barf$ is   differentiable,  with derivative denoted 
\begin{equation}
A(\theta)  = \partial_\theta \barf \, (\theta)
\label{e:barfDer}
\end{equation}
That is,   $A(\theta)$ is a $d\times d$ matrix for each $\theta\in\Re^d$, with 
$\displaystyle
A_{i,j}(\theta)  = \frac{\partial}{\partial \theta_j}   \barf_i\, (\theta)
$.

Moreover, the derivative  $A$ is  Lipschitz continuous, and      $A^\ocp = A (\theta^\ocp)$ is Hurwitz. 
\end{romannum}  
The matrix-valued function $A$ is uniformly bounded over $\Re^d$, subject to the global Lipschitz assumption on $\barf$ imposed in (A2). 
The Hurwitz assumption implies that  the ODE  \eqref{e:ODE_SA} is exponentially asymptotically stable. 

The final assumption is a substantial strengthening of the ergodic limit \eqref{e:pre-ProbeErgodic}.   For this it is simplest to adopt a ``Markovian'' setting in which the probing signal is itself the state process for a dynamical system: 
\begin{equation}
\ddt\qsaprobe = \qsaDyn(\qsaprobe) 
\label{e:qsaDynamics}
\end{equation}
where $\qsaDyn\colon \prstate \to \prstate $ is continuous,   with $\prstate$   a compact subset of Euclidean space.    
A canonical choice is the $K$-dimensional torus:
$\prstate =\{ x\in \Co^K :   |x_i| =1 \,,\ \ 1\le i\le K\}$,  and $\bfqsaprobe$ defined to allow modeling of excitation as a mixture of sinusoids:   
\begin{equation}
\qsaprobe_t = [\exp(j \omega_1 t) ,\dots,  \exp(j \omega_K t)  ]^\transpose   
\label{e:expProbe}
\end{equation}
with distinct frequencies, ordered for convenience: $0<\omega_1<\omega_2 < \cdots < \omega_K$.       The dynamical system \eqref{e:qsaDynamics}
is linear in this special case.    It is  ergodic, in a sense made precise in \Cref{t:expProbeErgodic}.

\sfb{
Although   deterministic,  the ODE \eqref{e:qsaDynamics}  defines a Markov process on $\prstate$. }

 \Cref{t:expProbeErgodic} also provides justification for the following assumptions for the special case \eqref{e:expProbe}, and under mild smoothness conditions on $f$.   We do \textit{not} assume  \eqref{e:expProbe} in our main results.   
\begin{romannum}
\item[\textbf{(A5)}]  
The probing signal is the solution to \eqref{e:qsaDynamics}, with $\prstate$   a compact subset of Euclidean space. It
has a unique invariant measure $\uppi$ on $\prstate$,   and satisfies the following ergodic theorems for the functions of interest, for each initial condition $\qsaprobe_0\in\prstate$:
\begin{romannum}
\item   For each $\theta$ there exists a solution to Poisson's equation $\haf (\theta,\varble)$ with forcing function $\tilf(\theta,\varble) = f(\theta,\varble) -\barf(\theta)$.   That is,
\begin{equation}
\haf( \theta, \qsaprobe_{t_0} ) = 
\int_{t_0} ^{t_1}   [ f( \theta ,  \qsaprobe_t)  - \barf(\theta) ] \, dt    + \haf( \theta, \qsaprobe_{t_1} )   \,,\qquad 0\le t_0\le t_1
\label{e:PoissonA1}
\end{equation}
with
\[
\barf(\theta)  = \int_\prstate f(\theta,x)\, \uppi(dx)  \qquad \textit{and} \qquad  \Zero =    \int_\prstate \haf(\theta,x)\, \uppi(dx) 
\]

\item  The function $\haf$,   and   derivatives   $\partial_\theta\barf$ and $\partial_\theta f$ 
are   Lipschitz continuous in $\theta$.   In particular,   $\haf$ admits a derivative $\haA$ satisfying
\[
\haA( \theta, \qsaprobe_{t_0} ) = 
\int_{t_0} ^{t_1}   [ A( \theta ,  \qsaprobe_t)  - A(\theta) ] \, dt    + \haA( \theta, \qsaprobe_{t_1} )   \,,\qquad 0\le t_0\le t_1
\]
where $A(\theta,\qsaprobe) = \partial_\theta f(\theta,\qsaprobe)$ and $A(\theta)  = \partial_\theta \barf \, (\theta)$ was defined in \eqref{e:barfDer}.   Lipschitz continuity is assumed uniform with respect to the exploration process: for $\Lip_f  <\infty$,
\begin{align*}  
\| \haf(\theta',\qsaprobe) - \haf(\theta,\qsaprobe)\| &\le \Lip_f \|\theta' - \theta\|
\\
\| A(\theta',\qsaprobe) - A(\theta,\qsaprobe)\| &\le \Lip_f \|\theta' - \theta\| 
\\
\| \haA(\theta',\qsaprobe) - \haA(\theta,\qsaprobe)\| &\le \Lip_f \|\theta' - \theta\| 
,\qquad \theta', \, \theta\in\Re^d\,, \ \qsaprobe\in \prstate
\end{align*}

 \item  Denote $\Upupsilon_t = [  \haA(\theta^\ocp,\qsaprobe_0) - \haA(\theta^\ocp,\qsaprobe_t)    ]    f(\theta^\ocp, \qsaprobe_t) $.   The following limit exists:
\[
\barUpupsilon
\eqdef
\lim_{T\to\infty}\frac{1}{T} \int_0^T  
\Upupsilon_t  \, dt
    =
    -
      \int_\prstate   \haA(\theta^\ocp,x)      f(\theta^\ocp, x)   \,  \uppi(dx) 
\]
and the   partial integrals are bounded:
\[
\sup_{T\ge 0}  \, \Bigl | \int_0^T    [\Upupsilon_t   - \barUpupsilon] \, dt  \Bigr |  <\infty
\]
\qed
\end{romannum} 
\end{romannum} 
Assumption~(A5)~(iii)   is imposed because the vector $\barUpupsilon$ arises in an approximation for the scaled error $\scerror_t$.
\sfb{the first appearance of $\Upupsilon_t  $ is in \Cref{t:YQSA}.}
This assumption is not much stronger than the others.   In particular, the partial integrals will be bounded if there is a bounded solution to Poisson's equation:
\[
\widehat\Upupsilon_{t_0}
 =
\int_{t_0} ^{t_1}   [ \Upupsilon_t   - \barUpupsilon] \, dt    +\widehat\Upupsilon_{t_1}
\]

The notation $\haf$ in \eqref{e:PoissonA1} is used to emphasize the parallels with Markov process and stochastic approximation theory: this  is precisely the solution to Poisson's equation (with forcing function $\tilf (\varble)=  f( \theta ,  \varble)  - \barf(\varble)$) that appears in theory of simulation of Markov processes, average-cost optimal control, and stochastic approximation \cite{glymey96a,asmgly07,metpri87,benmetpri12}.  For the   one-dimensional probing signal defined by the sawtooth function $\qsaprobe_t \eqdef t \mod 1$,   $t\geq 1$, 
a solution to Poisson's equation has a simple form: 
\[
\begin{aligned}
\hag(z) &= -\int_0^z [g(x) - \barg] \, dx  + \hag(0)\,, \qquad z \in [0, 1)
\\
\textit{where}\quad  \barg &= \int_0^1 g(x)\, dx  
\end{aligned}
\]

It will be useful to make the change of notation: 
for $\theta\in\Re^d$ and $T\ge 0$, 
\begin{equation}
 \XiI_T (\theta)   =  \int_0^T [ f( \theta, \qsaprobe_t)  - \barf(\theta) ] \,  dt    =    \haf(\theta,\qsaprobe_0) - \haf(\theta,\qsaprobe_T)
\label{e:NoiseInt}
\end{equation}
where the second equality follows from  \eqref{e:PoissonA1}.  The special case $\theta=\theta^\ocp$ deserves special notation:
\[
  \XiI_T  =  \XiI_T (\theta^\ocp)  =  \int_0^T f( \theta^\ocp, \qsaprobe_t)   \,  dt 
\]

\sfb{where we used the equilibrium condition $ \barf(\theta^\ocp) =\Zero$. }

\subsection{Main results}

For simplicity we restrict to the vanishing gain algorithm, using $a_t = 1/(1+t)^\rho$,  with $\rho \in (0, 1]$.    \notes{careful in contributions survey in introduction---may need a $g$ in coupling results  ($g$ in front of $\barY$ and...)}

 \begin{theorem}
\label[theorem]{t:Couple_main}
Suppose that (A1)--(A5) hold, and the gain is $a_t = 1/(1+t)^\rho$. 
Then,
\begin{romannum}
\item   $\rho<1$.  Then the following hold:
\begin{equation}
 \begin{aligned}
  \scerror_t   &= \barY  +  \XiI_t   + o(1)  
 \\
\ODEstate_t & =  \theta^\ocp  +   a_t [\barY  +  \XiI_t ] + o(a_t) 
\end{aligned} 
\label{e:QSAcouple_rho}
\end{equation}
where
\begin{equation}
 \barY  =
  (A^*)^{-1} \barUpupsilon
\label{e:barY}
\end{equation}

\item  $\rho = 1$. If $I+A^\ocp$ is Hurwitz, then the convergence rate is $1/t$:
\begin{equation}
 \begin{aligned}
  \scerror_t   &= \barY  +  \XiI_t   + o(1)  
 \\
\ODEstate_t & =  \theta^\ocp  +   a_t [\barY  +  \XiI_t ] + o(a_t) 
\end{aligned} 
\label{e:QSAcouple1}
\end{equation}
\qed
\end{romannum}
\end{theorem}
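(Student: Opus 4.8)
The plan is to linearize about $\theta^\ocp$ and analyse the scaled error $\scerror_t$ by variation of constants, reading off the limit from the Poisson/ergodic structure in (A5). Begin with the mean flow: by \Cref{t:barODEstate}, \eqref{e:ODE_haSA} is the time change of \eqref{e:ODE_SA} under $\tau(t)\eqdef\int_{t_0}^{t}a_r\,dr$, and (A4) makes \eqref{e:ODE_SA} exponentially stable near $\theta^\ocp$, so $\|\barODEstate_t-\theta^\ocp\|\le Ce^{-\lambda\tau(t)}$ for the stability margin $\lambda>0$ of $A^\ocp$. If $\rho<1$ then $\tau(t)$ grows like $t^{1-\rho}$, so this bound is $o(t^{-k})$ for every $k$, hence $o(a_t)$; if $\rho=1$ then $\tau(t)=\log(1+t)$ and the bound is $(1+t)^{-\lambda}$ up to a logarithmic factor, which is $o(1/t)=o(a_t)$ exactly when every eigenvalue of $A^\ocp$ has real part $<-1$, i.e.\ when $I+A^\ocp$ is Hurwitz. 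This both legitimizes the scaling and shows precisely where the hypothesis of (ii) enters; in particular $\ODEstate_t-\theta^\ocp=a_t\scerror_t+o(a_t)$.

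Next, differentiate $\scerror_t=a_t^{-1}(\ODEstate_t-\barODEstate_t)$ and Taylor-expand $\tilf(\cdot,\qsaprobe_t)=f(\cdot,\qsaprobe_t)-\barf(\cdot)$ and $\barf$ about $\theta^\ocp$; the quadratic remainder is $O(a_t^2\|\scerror_t\|^2)$, negligible once boundedness of $\scerror_t$ is known. This gives
\[
\ddt\scerror_t = f(\theta^\ocp,\qsaprobe_t)+a_tA^\ocp\scerror_t+a_t\big[A(\theta^\ocp,\qsaprobe_t)-A^\ocp\big]\scerror_t-\tfrac{\dot a_t}{a_t}\scerror_t+o(a_t).
\]
For $\rho<1$, $-\dot a_t/a_t=\rho/(1+t)=o(a_t)$ and is swept into the error; for $\rho=1$ it equals $a_t$ and merges with $a_tA^\ocp\scerror_t$ to produce the effective Hurwitz generator $I+A^\ocp$. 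Writing $\Phi(t,s)\eqdef\exp\!\big(B\int_s^{t}a_r\,dr\big)$ with $B=A^\ocp$ (resp.\ $B=I+A^\ocp$) when $\rho<1$ (resp.\ $\rho=1$), so that $\|\Phi(t,s)\|\le Ce^{-\lambda\int_s^{t}a_r\,dr}$, variation of constants presents $\scerror_t$ as four terms: (a) $\Phi(t,t_0)\scerror_{t_0}\to0$; (b) the $o(a_s)$ contribution integrates to $o(1)$; (c) $\int_{t_0}^{t}\Phi(t,s)f(\theta^\ocp,\qsaprobe_s)\,ds=\XiI_t+o(1)$, by an integration by parts using $f(\theta^\ocp,\qsaprobe_s)=-\tfrac{d}{ds}\haf(\theta^\ocp,\qsaprobe_s)$ from \eqref{e:PoissonA1} together with $\int_\prstate\haf(\theta^\ocp,x)\,\uppi(dx)=0$; and (d) the cross term $\int_{t_0}^{t}\Phi(t,s)\,a_s[A(\theta^\ocp,\qsaprobe_s)-A^\ocp]\scerror_s\,ds$ converges to $-B^{-1}$ times the temporal mean of $[A(\theta^\ocp,\qsaprobe_t)-A^\ocp]\scerror_t$.

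To evaluate (d), insert the bootstrap ansatz $\scerror_t=\XiI_t+\barY+o(1)$; using $\XiI_t=\haf(\theta^\ocp,\qsaprobe_0)-\haf(\theta^\ocp,\qsaprobe_t)$ and $\int_\prstate[A(\theta^\ocp,x)-A^\ocp]\,\uppi(dx)=0$, the temporal mean reduces to $\int_\prstate[A(\theta^\ocp,x)-A^\ocp]\haf(\theta^\ocp,x)\,\uppi(dx)$, and a further integration by parts on the dynamical system---the time-average of $\tfrac{d}{dt}[\haA(\theta^\ocp,\qsaprobe_t)\haf(\theta^\ocp,\qsaprobe_t)]$ vanishes---identifies this with $-\barUpupsilon$ as in (A5)(iii). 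Hence (d) $\to B^{-1}\barUpupsilon$, and assembling (a)--(d) yields $\scerror_t=\XiI_t+B^{-1}\barUpupsilon+o(1)$; a fixed-point check confirms consistency. For $\rho<1$, $B=A^\ocp$ and this is \eqref{e:QSAcouple_rho} with $\barY$ as in \eqref{e:barY}; for $\rho=1$ the same scheme (with $B=I+A^\ocp$, and now requiring $I+A^\ocp$ Hurwitz) gives \eqref{e:QSAcouple1}. The second lines of \eqref{e:QSAcouple_rho}--\eqref{e:QSAcouple1} follow on multiplying through by $a_t$ and invoking the first paragraph.

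Two ingredients remain. One is the averaging identity $\int_{t_0}^{t}\Phi(t,s)a_sh_s\,ds\to-B^{-1}\bar h$ for bounded $h$ whose ergodic mean $\bar h$ is a bounded coboundary (guaranteed by (A5)); this is routine via the substitution $\sigma=\int_s^{t}a_r\,dr$ and dominated convergence, using $a_s\to0$ to average out the oscillations. The other---and this is the main obstacle---is the a priori bound $\limsup_t\|\scerror_t\|<\infty$. A naive Gronwall estimate on the variation-of-constants identity fails because the \emph{forcing} $[A(\theta^\ocp,\qsaprobe_t)-A^\ocp]\scerror_t$ is proportional to the unknown; the remedy is to integrate the cross term by parts once more, using $A(\theta^\ocp,\qsaprobe_t)-A^\ocp=-\tfrac{d}{dt}\haA(\theta^\ocp,\qsaprobe_t)$, so that for $t_0$ large it contributes only an arbitrarily small multiple of $\sup_{[t_0,t]}\|\scerror\|$ and the loop closes. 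Thus the crux is the interplay between the vanishing gain and the bounded-partial-integral (zero temporal mean) structure of the probing signal---precisely the feature that separates QSA from the bare mean flow \eqref{e:ODE_SA}.
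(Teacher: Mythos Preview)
Your strategy is sound and arrives at the right objects, but it differs from the paper's route in one structural way that is worth noting. The paper does not work with $\scerror_t$ directly via variation of constants; instead it makes the single change of variables $Y_t\eqdef\scerror_t-\XiI_t(\ODEstate_t)$ \emph{before} analysing anything (this is \eqref{e:Y} and \Cref{t:YQSA}). That subtraction removes the non-vanishing forcing $\tilXi_t$ outright, and what remains is a bona fide QSA ODE for $Y_t$ with mean vector field $B\,Y-\barUpupsilon+B\,\overline{\XiI}$; the existing convergence machinery (\Cref{t:convergenceV_QSA}) then gives $Y_t\to\barY$ directly, with no bootstrap. Your integration by parts in step~(c) is exactly this subtraction done implicitly, and your handling of boundedness via $\haA$ is the same idea applied to the cross term---so the two proofs are dual, but the paper's version packages boundedness and convergence in one shot.

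The one genuine gap in your write-up is the bootstrap for term~(d). Inserting the ansatz $\scerror_s=\XiI_s+\barY+o(1)$ and then declaring ``a fixed-point check confirms consistency'' shows only that \emph{if} $\scerror_t$ has a limit of that shape then the constant must be $B^{-1}\barUpupsilon$; it does not prove convergence. To close this you would have to upgrade the argument: once boundedness is secured (your last paragraph), replace $\scerror_s$ in~(d) not by the ansatz but by its actual value, integrate by parts once more using $A(\theta^\ocp,\qsaprobe_s)-A^\ocp=-\tfrac{d}{ds}\haA(\theta^\ocp,\qsaprobe_s)$, and show the resulting expression converges without assuming the answer. This is doable, but it is precisely the work the paper sidesteps by passing to $Y_t$ first.
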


 \sfb{Applying the arguments used to transform the average \eqref{e:MCQSA} into a simple version of the QSA ODE,  we find that the solution to \eqref{e:QSAgenRPb}  
\\
A standard choice is $a_t = 1 / (1+t)^\rho$,   with $0<\rho <1$.   
 }

The averaging technique of
Ruppert and Polyak   is often presented as  a two time-scale algorithm,   for which a continuous time formulation is defined as follows:
 \begin{subequations}
\begin{align}  
\ddt \ODEstate_t  &=  a_t   f(\ODEstate_t,\qsaprobe_t) \,,    
 \label{e:QSAgenRPa}
  \\
\ddt\ODEstate^{\text{RP}}_t         & = \frac{1}{ 1+t }  [   \ODEstate_t-  \ODEstate^{\text{RP}}_t  ]
\label{e:QSAgenRPb}
\end{align}%
What is crucial in this estimation technique is that the first gain is relatively large:
$\displaystyle
\lim_{t\to\infty}  (1+t) a_t  = \infty
$.
Regardless of $\bfma$, the estimates can be expressed as an approximate average:
\[
\ODEstate^{\text{RP}}_T  =  \frac{1}{1+T} \ODEstate^{\text{RP}}_0  +  \frac{1}{1+T} \int_0^T     \ODEstate_t\,  dt\,,  \qquad T\ge 0
\]
It may not make sense to average over the entire history,  since you may be including wild transients.      Instead, choose a constant $K>1$,    and define
\begin{align}  
\ODEstate^{\text{RP}}_T   =  \frac{1}{T-T_0}  \int_{T_0 }^T    \ODEstate_t\,  dt\,,  \qquad T\ge 0\,, \ \  T_0= T-T/K
\label{e:QSAgenRPc}%
\end{align}%
\label{e:QSAgenRP}%
\end{subequations}%
The choice of $T_0$ is made so that  $ 1/(T-T_0) = K/T$.
The set of equations \eqref{e:QSAgenRP} will be called \textit{Ruppert-Polyak} averaging.

\begin{theorem}
\label{t:Couple_rhoRP}
Suppose that the assumptions of \Cref{t:Couple_main} hold, so that in particular $a_t = g/(1+t)^\rho$ using any $g>0$,  and with $\rho\in(\half, 1)$.   
Then,   with $\ODEstate^{\text{RP}}_T$ defined by either \eqref{e:QSAgenRPb} or  \eqref{e:QSAgenRPc},    we have
\[
\ODEstate^{\text{RP}}_T  = \theta^\ocp + a_T (1-\rho)^{-1}   \barUpupsilon   +    \Upphi_T /T  + o(1/T)
\]
where $\{ \Upphi_t \}$ is a bounded function of time.   Consequently, the convergence rate is $1/T$ if and only if $\barUpupsilon =\Zero$.
\qed
\end{theorem}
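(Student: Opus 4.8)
The plan is to substitute the sharp finite-$t$ expansion of \Cref{t:Couple_main}(i) into each of the two averaging recursions and integrate term by term. Since $\rho\in(\half,1)$ is a special case of $\rho<1$, \Cref{t:Couple_main}(i) gives $\ODEstate_t-\theta^\ocp = a_t\barY + a_t\XiI_t + \varepsilon_t$, where $\barY$ is the vector \eqref{e:barY}, $\{\XiI_t\}$ is bounded with zero temporal mean (by the Poisson/ergodic hypotheses in (A5)), and $\varepsilon_t=o(a_t)$. I expect that to land the error term $o(1/T)$ in the conclusion one must re-open the proof of \Cref{t:Couple_main} just far enough to see that $\varepsilon_t$ splits as a piece of size $O(a_t^2)$ plus $a_t$ times a bounded, zero-temporal-mean oscillation; this is exactly where the hypothesis $\rho>\half$ is used, since then $\int_0^\infty a_t^2\,dt<\infty$ and both pieces have running average $O(1/T)$.

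Next I would reduce the two estimators to a single integral computation. The linear filter \eqref{e:QSAgenRPb} already satisfies $\ODEstate^{\text{RP}}_T = (1+T)^{-1}\ODEstate^{\text{RP}}_0 + (1+T)^{-1}\int_0^T\ODEstate_t\,dt$, so it is a running average over $[0,T]$ up to an $O(1/T)$ transient that is folded into $\Upphi_T/T$; the windowed estimator \eqref{e:QSAgenRPc} is the analogous average over $[T_0,T]$ with $T-T_0=T/K$. In either case $\ODEstate^{\text{RP}}_T-\theta^\ocp$ decomposes into three running averages: of $a_t\barY$, of $a_t\XiI_t$, and of $\varepsilon_t$.

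The heart of the proof is the bias average. Using $\int_s^t a_r\,dr = \tfrac{g}{1-\rho}\bigl[(1+t)^{1-\rho}-(1+s)^{1-\rho}\bigr]$ and dividing by the window length, a direct asymptotic expansion gives that the running average of $a_t$ equals $(1-\rho)^{-1}a_T+O(1/T)$, so the bias average is $a_T(1-\rho)^{-1}\barUpupsilon+O(1/T)$ by \eqref{e:barY} --- this is the leading term of the statement, and it vanishes iff $\barUpupsilon=\Zero$ (equivalently $\barY=\Zero$, as $A^\ocp$ is invertible). The structural message, and the reason Ruppert--Polyak averaging cannot buy the $1/T$ rate, is precisely that this average is of order $a_T=\Theta(t^{-\rho})$, not $O(1/T)$: the slowly vanishing deterministic drift $a_t\barY$ is not averaged away. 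For the fluctuation average I would integrate by parts, transferring the $a_t$-weight onto a bounded primitive of $\XiI$ (available since $\XiI$ is bounded with zero temporal mean); because $a_T\to 0$ and $\int_0^\infty|a_t'|\,dt=a_0<\infty$ by monotonicity of $\bfma$ in (A1), the integral of $a_t\XiI_t$ over the window is $O(1)$, hence its running average is $O(1/T)$ and joins $\Upphi_T/T$. The same integration-by-parts bookkeeping, using $\int_0^\infty a_t^2\,dt<\infty$ for the $O(a_t^2)$ part of $\varepsilon_t$, shows the running average of $\varepsilon_t$ is $o(1/T)$. Summing the three contributions yields $\ODEstate^{\text{RP}}_T=\theta^\ocp+a_T(1-\rho)^{-1}\barUpupsilon+\Upphi_T/T+o(1/T)$ with $\{\Upphi_t\}$ bounded, and the ``if and only if'' then follows from the rate definition \eqref{e:QSArate}, since $a_T\asymp T^{-\rho}$ dominates $1/T$ when $\barUpupsilon\ne\Zero$ and is absent otherwise.

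The step I expect to be the main obstacle is establishing that the averaged remainder is $o(1/T)$ rather than merely $o(a_T)$: the black-box statement of \Cref{t:Couple_main} only offers $\varepsilon_t=o(a_t)$, which is too weak because $1/T=o(a_T)$ for $\rho<1$. One must therefore extract the finer decomposition of $\varepsilon_t$ from the proof of \Cref{t:Couple_main}, and it is precisely this that forces the restriction $\rho\in(\half,1)$ in place of $\rho\in(0,1)$. A secondary, routine point is making the integration-by-parts and asymptotic estimates uniform in the moving endpoint $T_0=T-T/K$.
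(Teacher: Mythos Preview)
Your high-level decomposition is sound, and you correctly locate the hard step: controlling the averaged remainder to be $o(1/T)$ rather than merely $o(a_T)$. But the specific mechanism you propose for that step does not work. You conjecture that $\varepsilon_t/a_t$ splits as an $O(a_t)$ piece plus a bounded zero-temporal-mean oscillation amenable to the same integration-by-parts trick as $\XiI_t$. In fact, unpacking the proof of \Cref{t:Couple_main} gives $\scerror_t-\barY-\XiI_t = (Y_t-\barY) + (\XiI_t(\ODEstate_t)-\XiI_t)$, where $Y_t$ is the process of \Cref{t:YQSA}. The second piece is $O(\|\ODEstate_t-\theta^\ocp\|)=O(a_t)$, matching your first summand. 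But $Y_t-\barY$ is not an oscillation with zero temporal mean; it is a vanishing transient, and the black-box statement that $Y_t\to\barY$ gives no rate. With only $Y_t-\barY=o(1)$ you get $\int_{T_0}^T a_t(Y_t-\barY)\,dt=o(T^{1-\rho})$, which is far from $o(1)$. To rescue your route you would need to apply the entire coupling analysis a second time, to the QSA-type ODE \eqref{e:YQSA} for $Y_t$, to extract a rate $Y_t-\barY=O(a_t)$; only then does $a_t(Y_t-\barY)=O(a_t^2)$ and $\rho>\half$ finish the job.

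The paper sidesteps this recursion with the classical Polyak--Juditsky device: instead of integrating the \emph{expansion} of $\ODEstate_t-\theta^\ocp$, it integrates the \emph{differential equation} \eqref{e:scaledErrorRep_cor_rho} for $\scerror_t$ over $[T_0,T]$ and inverts $A^\ocp$ to obtain
\[
\int_{T_0}^T(\ODEstate_t-\barODEstate_t)\,dt = [A^\ocp]^{-1}\Bigl\{(\scerror_T-\scerror_{T_0}) - \int_{T_0}^T a_t\Delta_t\,dt - \int_{T_0}^T\tilXi_t\,dt\Bigr\}.
\]
Now only \emph{boundedness} of $\scerror_t$ (equivalently of $Y_t$) is needed, not a rate: $\scerror_T-\scerror_{T_0}=O(1)$ goes straight into $\Upphi_T/T$. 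The restriction $\rho>\half$ enters exactly where you predicted, through $\int a_t\Delta_t\,dt=O\!\bigl(\int a_t^2\,dt\bigr)=o(1)$. The noise integral $\int\tilXi_t\,dt$ is then rewritten via \eqref{e:QSAnoiseRep} and \eqref{e:NoiseInt} as boundary terms plus $\int_{T_0}^T a_t\Upupsilon_t\,dt$; separating $\Upupsilon_t=\barUpupsilon+\tilUpupsilon_t$ and integrating by parts against the bounded primitive $\tilUpupsilon_t^I$ (Assumption (A5)(iii)) isolates $\barUpupsilon\int_{T_0}^T a_t\,dt$ as the sole term of order $T^{1-\rho}$. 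So the two approaches agree on the bias computation and the role of $\rho>\half$, but differ crucially on the remainder: your direct integration needs a second-order rate you have not established, while the paper's ODE-inversion gets by with boundedness alone.
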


\notes{In practice it is often convenient to simply use a fixed constant $a_t\equiv \alpha >0$ in  \eqref{e:QSAgenRPa}.  We can expect some bias in the estimates,   but we will see in examples that the bias is often negligible.   ...  Low bias is suggested by   \Cref{t:constantGainQSAsolnRP}
}

\subsection{Proof outline}

The first step is to explain why we can replace $\theta^\ocp$ with $\barODEstate_t$ in the definition of the scaled error $\scerror_t$.
  \Cref{t:Fast_rho}  provides justification, and makes clear the enormous difference between the choice of $\rho=1$ or $\rho<1$  when using the gain $a_t=1/(1+t)^\rho$:

\begin{proposition}
\label[proposition]{t:Fast_rho} 
Suppose   that (A3) holds,  and that $\barf$ is $C^1$ with $A^\ocp  $   Hurwitz. Fix $\varrho_0>0$ satisfying $\text{Real}(\lambda) < -\varrho_0$ for every eigenvalue $\lambda$ for $A^\ocp$. 
 Then,  there exists $b_0>0$,  $B<\infty$ such that whenever $\|  \odestate_{\SAtime_0} - \theta^\ocp \|\le b_0 $,  the solution   $\{ \odestate_\SAtime : \SAtime\ge \SAtime_0 \}$ of the ODE \eqref{e:ODE_SA} satisfies   
\[
\|
 \odestate_\SAtime  -\theta^\ocp \|  \le  B  \|  \odestate_{\SAtime_0} - \theta^\ocp \| \exp( - \varrho_0 [\SAtime -  \SAtime_0 ] )  \,,\qquad \SAtime \ge \SAtime_0
\]
Consequently, the following   hold for the solution to the ODE \eqref{e:ODE_haSA} using the gain  $a_t = 1/(1+t)^\rho$, with $0\le\rho$,
and $\|  \barODEstate_{t_0} - \theta^\ocp \|\le b_0 $:
\begin{romannum}
\item   If $\rho =1$ then
$
\|  \barODEstate_t  -\theta^\ocp \|  \le  B  \|  \odestate_{\SAtime_0} - \theta^\ocp \| \bigl[ t_0/t ]^{\varrho_0}$ for $ t\ge t_0$. 

\item  Using  any  $0<\rho <1$,
\begin{equation}
\|  \barODEstate_t  -\theta^\ocp \|  \le    \barB_{t_0}   \|  \odestate_{\SAtime_0} - \theta^\ocp \|   \exp \bigl( - \varrho_0(1-\rho)^{-1}    (1+t)^{1-\rho}    \bigr)  \,,\qquad t\ge t_0
\label{e:Fast_rho}
\end{equation}  
where $  \barB_{ t_0}   = B  \exp \bigl(  \varrho_0(1-\rho)^{-1}    (1+t_0)^{1-\rho}    \bigr)$.  
 \qed
\end{romannum} 
\end{proposition}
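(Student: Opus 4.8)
The plan is to prove the local exponential estimate for \eqref{e:ODE_SA} by a quadratic Lyapunov argument, and then transfer it to \eqref{e:ODE_haSA} via the time change of \Cref{t:barODEstate}, after which (i) and (ii) reduce to evaluating $\int_{t_0}^t a_r\,dr$. \emph{Step 1: exponential stability of \eqref{e:ODE_SA}.} Since every eigenvalue $\lambda$ of $A^\ocp$ obeys $\Real(\lambda)<-\varrho_0$, the matrix $A^\ocp+\varrho_0 I$ is Hurwitz, so there is a symmetric $P\succ 0$ with $(A^\ocp+\varrho_0 I)^\transpose P+P(A^\ocp+\varrho_0 I)=-I$, i.e.\ $(A^\ocp)^\transpose P+PA^\ocp=-I-2\varrho_0 P$. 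Put $V(\theta)=(\theta-\theta^\ocp)^\transpose P(\theta-\theta^\ocp)$, and, using that $\barf$ is $C^1$ with $\barf(\theta^\ocp)=\Zero$ and $\partial_\theta\barf(\theta^\ocp)=A^\ocp$, write $\barf(\theta)=A^\ocp(\theta-\theta^\ocp)+R(\theta)$ with $\|R(\theta)\|=o(\|\theta-\theta^\ocp\|)$. Along a solution of \eqref{e:ODE_SA},
\[
\frac{d}{d\SAtime}V(\odestate_\SAtime)=-\|\odestate_\SAtime-\theta^\ocp\|^2-2\varrho_0 V(\odestate_\SAtime)+2(\odestate_\SAtime-\theta^\ocp)^\transpose P\,R(\odestate_\SAtime).
\]
Choose $r_\star>0$ so small that $\|R(\theta)\|\le\tfrac12\|P\|^{-1}\|\theta-\theta^\ocp\|$ whenever $\|\theta-\theta^\ocp\|\le r_\star$; on that ball the right-hand side is $\le-2\varrho_0 V(\odestate_\SAtime)$. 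Taking $b_0=r_\star\sqrt{\lambda_{\min}(P)/\lambda_{\max}(P)}$, the hypothesis $\|\odestate_{\SAtime_0}-\theta^\ocp\|\le b_0$ gives $V(\odestate_{\SAtime_0})\le\lambda_{\min}(P)r_\star^2$; since $V$ cannot grow while $\odestate_\SAtime$ stays in the ball, a standard continuation argument (let $T^\ast$ be the first exit time and argue $T^\ast=\infty$) shows $\odestate_\SAtime$ remains in the ball and $\frac{d}{d\SAtime}V(\odestate_\SAtime)\le-2\varrho_0 V(\odestate_\SAtime)$ for all $\SAtime\ge\SAtime_0$. Gronwall together with $\lambda_{\min}(P)\|\cdot\|^2\le V\le\lambda_{\max}(P)\|\cdot\|^2$ then gives the claimed bound with $B=\sqrt{\lambda_{\max}(P)/\lambda_{\min}(P)}$.

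\emph{Step 2: transfer and evaluation of the clock.} Set $\sigma(t)=\int_{t_0}^t a_r\,dr$; by (A1) this is continuous, strictly increasing, and $\sigma(t)\to\infty$, hence invertible, and $\odestate_\SAtime:=\barODEstate_{t(\SAtime)}$ satisfies $\frac{d}{d\SAtime}\odestate_\SAtime=\barf(\odestate_\SAtime)$ with $\odestate_0=\barODEstate_{t_0}$ — this is precisely the time change of \Cref{t:barODEstate}. Since $\|\barODEstate_{t_0}-\theta^\ocp\|\le b_0$, Step 1 applies with $\SAtime_0=0$, and evaluating at $\SAtime=\sigma(t)$,
\[
\|\barODEstate_t-\theta^\ocp\|\le B\,\|\barODEstate_{t_0}-\theta^\ocp\|\,\exp\!\Bigl(-\varrho_0\!\int_{t_0}^t a_r\,dr\Bigr),\qquad t\ge t_0 .
\]
For $\rho=1$, $\int_{t_0}^t a_r\,dr=\ln\!\bigl(\tfrac{1+t}{1+t_0}\bigr)$, so the exponential equals $\bigl(\tfrac{1+t_0}{1+t}\bigr)^{\varrho_0}$; using $1+t\ge t$ and $1+t_0\le 2t_0$ (valid after restarting the clock at any $t_0\ge1$) this is $\le 2^{\varrho_0}(t_0/t)^{\varrho_0}$, giving (i) once the constant is absorbed into $B$. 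For $0<\rho<1$, $\int_{t_0}^t a_r\,dr=(1-\rho)^{-1}\bigl[(1+t)^{1-\rho}-(1+t_0)^{1-\rho}\bigr]$, so the exponential factors as $\exp\!\bigl(\varrho_0(1-\rho)^{-1}(1+t_0)^{1-\rho}\bigr)\exp\!\bigl(-\varrho_0(1-\rho)^{-1}(1+t)^{1-\rho}\bigr)$, which is exactly \eqref{e:Fast_rho} with $\barB_{t_0}=B\exp\!\bigl(\varrho_0(1-\rho)^{-1}(1+t_0)^{1-\rho}\bigr)$.

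The only delicate point is the trapping step in Step 1 — calibrating the radius $r_\star$ on which the quadratic part of $\frac{d}{d\SAtime}V$ dominates the remainder $R(\cdot)$ against a forward-invariant sublevel set of $V$ — and this is entirely routine. The conceptual content is simply that the sharp decay rate $\varrho_0$ of \eqref{e:ODE_SA} is reparametrized by the ``clock'' $\int_{t_0}^t a_r\,dr$, which is logarithmic (hence $t^{-\varrho_0}$ decay) when $\rho=1$ and a sublinear power of $t$ (hence stretched-exponential decay) when $\rho<1$.
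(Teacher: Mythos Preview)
Your proof is correct and follows exactly the route the paper intends: the paper does not spell out a proof of this proposition, treating the local exponential estimate for \eqref{e:ODE_SA} as standard and stating \Cref{t:barODEstate} precisely so that the bound transfers to \eqref{e:ODE_haSA} by evaluating $\SAtime-\SAtime_0=\int_{t_0}^t a_r\,dr$. Your Step~1 supplies the omitted Lyapunov argument and Step~2 carries out the time-change computation; the absorption of a constant factor into $B$ in part~(i) (from replacing $(1+t_0)/(1+t)$ by $t_0/t$, valid once $t_0\ge 1$) is the right way to match the stated form.
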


A significant conclusion is that when $\rho<1$, so that   \Cref{e:Fast_rho} holds, then
\[
\scerror_t = \frac{1}{a_t}  \bigl(  \ODEstate_t -  \theta^\ocp \bigr)   + \epsy^z_t
\]
with $\epsy^z_t =   [    \theta^\ocp-\barODEstate_t   ]/a_t$ vanishing \textit{quickly} as $t\to\infty$.

     \Cref{t:scaled_error_rep} shows how the nonlinear ODE is naturally ``linearized'', provided it is convergent.

\begin{proposition}
\label[proposition]{t:scaled_error_rep}
Suppose that (A1)--(A4) hold, and that  solutions to \eqref{e:QSAgen} converge to $\theta^\ocp$ for each initial condition.  Then, the   scaled error admits the representation 
\begin{equation}
\ddt \scerror_t   =      \bigl[  \dlstep_t   I     + a_t    A (\barODEstate_t)    \bigr]  \scerror_t    +   a_t \Delta_t +    \tilXi_t  \,,\qquad \scerror_{t_0} =0 
\label{e:scaledErrorRep}
\end{equation}
where $\dlstep_t = -   \ddt \log(a_t)     $,   $ \tilXi_t   = f(\ODEstate_t,\qsaprobe_t)     -  \barf ( \ODEstate_t  )  $,  and    $ \|\Delta_t  \| =  
o(  \| \scerror_t  \| )  $  as $t\to\infty$.

  In particular,  setting $A^\ocp = A (\theta^\ocp)$,
\begin{romannum}
\item    With $a_t = g/(1+t)$,
\begin{equation} 
\ddt \scerror_t   =     a_t  \bigl[  g^{-1}  I     +    A^\ocp   \bigr]  \scerror_t    +  a_t \Delta_t   +       \tilXi_t   
\label{e:scaledErrorRep_cor1}
\end{equation}

\item
For any $\rho \in (0, 1)$, using the gain
$a_t = g/(1+t)^\rho$ gives
\begin{equation} 
\ddt \scerror_t   =    a_t        A^\ocp       \scerror_t +  a_t \Delta_t    +       \tilXi_t   
\label{e:scaledErrorRep_cor_rho}
\end{equation}
\end{romannum}
where the definition of $ \Delta_t $ is different in each appearance, but in each case satisfies    $ \|\Delta_t  \| =  
o(  \| \scerror_t  \| )  $.
 \end{proposition}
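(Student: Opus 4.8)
The plan is to differentiate the defining identity $\ODEstate_t - \barODEstate_t = a_t \scerror_t$ (from \eqref{e:scaled_error}) and substitute the two ODEs \eqref{e:QSAgen} and \eqref{e:ODE_haSA}. On the one hand $\ddt(\ODEstate_t - \barODEstate_t) = \dot a_t\, \scerror_t + a_t \ddt\scerror_t$; on the other hand it equals $a_t f(\ODEstate_t,\qsaprobe_t) - a_t\barf(\barODEstate_t)$. First I would split the latter as
\[
a_t\bigl[f(\ODEstate_t,\qsaprobe_t) - \barf(\ODEstate_t)\bigr] + a_t\bigl[\barf(\ODEstate_t) - \barf(\barODEstate_t)\bigr] = a_t\tilXi_t + a_t\bigl[\barf(\ODEstate_t) - \barf(\barODEstate_t)\bigr],
\]
the first term being exactly $a_t\tilXi_t$ with $\tilXi_t = f(\ODEstate_t,\qsaprobe_t) - \barf(\ODEstate_t)$. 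For the drift difference I would invoke (A4) and the fundamental theorem of calculus to write $\barf(\ODEstate_t) - \barf(\barODEstate_t) = G_t\,(\ODEstate_t - \barODEstate_t)$ with $G_t \eqdef \int_0^1 A\bigl(\barODEstate_t + s(\ODEstate_t - \barODEstate_t)\bigr)\, ds$, and then use Lipschitz continuity of $A$ to replace $G_t$ by $A(\barODEstate_t)$ at the cost of an error $E_t = G_t - A(\barODEstate_t)$ obeying $\|E_t\| \le \tfrac12\Lip_f\|\ODEstate_t - \barODEstate_t\| = \tfrac12\Lip_f\, a_t\|\scerror_t\|$.

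Substituting $\ODEstate_t - \barODEstate_t = a_t\scerror_t$, collecting terms, cancelling a common factor $a_t$, and recognising $\dlstep_t = -\ddt\log a_t = -\dot a_t/a_t$ then gives exactly \eqref{e:scaledErrorRep} with $\Delta_t = E_t\scerror_t$; the initial condition $\scerror_{t_0} = 0$ follows from $\barODEstate_{t_0} = \ODEstate_{t_0}$. To see that $\|\Delta_t\| = o(\|\scerror_t\|)$ I would use that $\bfODEstate$ converges to $\theta^\ocp$ by hypothesis and $\barODEstate_t \to \theta^\ocp$ as well (a consequence of (A3) together with the time-change relating \eqref{e:ODE_haSA} to \eqref{e:ODE_SA}, cf.\ \Cref{t:Fast_rho}); hence $\|\ODEstate_t - \barODEstate_t\| \to 0$, so $\|E_t\| \to 0$, and therefore $\|\Delta_t\| \le \|E_t\|\,\|\scerror_t\| = o(\|\scerror_t\|)$.

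For the two corollaries I would first write $A(\barODEstate_t) = A^\ocp + [A(\barODEstate_t) - A^\ocp]$ and use $\barODEstate_t \to \theta^\ocp$ with the Lipschitz bound $\|A(\barODEstate_t) - A^\ocp\| \le \Lip_f\|\barODEstate_t - \theta^\ocp\| \to 0$ to absorb $a_t[A(\barODEstate_t) - A^\ocp]\scerror_t$ into a redefined $a_t\Delta_t$. With $a_t = g/(1+t)^\rho$ one has $\dlstep_t = \rho/(1+t)$. When $\rho = 1$ this is $\dlstep_t = 1/(1+t) = g^{-1}a_t$, which turns $\dlstep_t I + a_t A^\ocp$ into $a_t[g^{-1}I + A^\ocp]$, giving \eqref{e:scaledErrorRep_cor1}; when $\rho \in (0,1)$ instead $\dlstep_t/a_t = (\rho/g)(1+t)^{\rho-1} \to 0$, so $\dlstep_t I\scerror_t$ is itself $o(a_t\|\scerror_t\|)$ and is likewise absorbed into $a_t\Delta_t$, leaving \eqref{e:scaledErrorRep_cor_rho}. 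The only non-routine point is establishing $\|\Delta_t\| = o(\|\scerror_t\|)$ \emph{without} presupposing that $\bfscerror$ is bounded (boundedness of $\bfscerror$ is a conclusion reached only later, in \Cref{t:Couple_main}); the argument above avoids circularity because it uses only $\|\ODEstate_t - \barODEstate_t\|\to 0$, i.e.\ plain convergence of both trajectories and no rate. Everything else is bookkeeping of the three vanishing-coefficient error contributions: the Jacobian-averaging error $E_t$, the frozen-coefficient error $A(\barODEstate_t) - A^\ocp$, and, for $\rho<1$, the logarithmic-derivative term $\dlstep_t$.
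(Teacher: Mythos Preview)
Your proposal is correct and follows essentially the same route as the paper: differentiate the defining relation for $\scerror_t$, split $f(\ODEstate_t,\qsaprobe_t)-\barf(\barODEstate_t)$ by adding and subtracting $\barf(\ODEstate_t)$, linearize $\barf(\ODEstate_t)-\barf(\barODEstate_t)$ about $\barODEstate_t$ with a Lipschitz-$A$ remainder, and then for the corollaries absorb $A(\barODEstate_t)-A^\ocp$ and (when $\rho<1$) the $\dlstep_t$ term into a redefined $\Delta_t$. Your use of the integral form $G_t=\int_0^1 A(\barODEstate_t+s(\ODEstate_t-\barODEstate_t))\,ds$ makes the remainder estimate more explicit than the paper's one-line Taylor/Lipschitz appeal, and your remark on avoiding circularity (only $\|\ODEstate_t-\barODEstate_t\|\to 0$ is used, not boundedness of $\bfscerror$) is a point the paper leaves implicit.
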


Note that $\dlstep_t = -\ddt \log(a_t) $ is always  non-negative under (A1).

The challenge in applying \Cref{t:scaled_error_rep} is that the ``noise'' process $\tilXi_t$ appearing in \eqref{e:scaledErrorRep} is non-vanishing, and is not scaled by a vanishing term.    This is resolved through the change of variables:   
denote for $t\ge 0$,
\begin{equation}
Y_t \eqdef  \scerror_t -  \XiI_t(\ODEstate_t)  
\label{e:Y}
\end{equation}
where $\XiI_t(\ODEstate_t) $ is defined in \eqref{e:NoiseInt}. It is shown in \Cref{t:YQSA} that $Y_t$ solves the differential equation
\[
\ddt Y_t   =    
    a_t      \left[A^\ocp Y_t   +  \Delta^Y_t   -  \Upupsilon_t      +    A^\ocp  \XiI_t   \right]
+
 \dlstep_t   [ Y_t   +   \XiI_t   ]
\]
  where  $ \XiI_t  =  \XiI_t (\theta^\ocp) $,  and
   $ \| \Delta^Y_t \|   = o(1 + \| Y_t\|)$ as $t\to\infty$,  and from this we obtain convergence: $\lim_{t\to\infty}  Y_t =\barY$.


This leads easily to \Cref{t:Couple_main},  and
\Cref{t:Couple_rhoRP} then  follows---the details can be found in the Appendix, along with proofs of the other results.


\section{Gradient-Free Optimization}
\label{s:extremeQSA} 

Consider the unconstrained   minimization problem
\begin{equation}
\min_{\theta \in \Re^d} \Obj(\theta).
\label{e:minObj}
\end{equation}
It is assumed that    $\Obj \colon\Re^d\to\Re$ has a unique minimizer, denoted as $\theta^\ocp$.  The goal here is to estimate $\theta^\ocp$ based on observations of $\Obj(\preODEstate_t)$,  where   $\bfpreODEstate$ is chosen by design.

The   algorithms described below are each based on the following architecture:  construct an ODE of the form
\begin{equation}
\ddt \ODEstate_t  = - a_t \tilnabla_\Obj(t)
\label{e:qSGD}
\end{equation} 
where   $\bfma$ is a non-negative, time-varying gain,   and $ \tilnabla_\Obj(t)$ is designed to approximate the gradient (in an average sense).

The algorithms are similar to both SPSA and ESC (see introduction).  We opt for  \textit{quasi Stochastic Gradient Descent} (qSGD) here,  as algorithms of the form \eqref{e:qSGD} are     cousins of SGD algorithms that are also designed to approximate gradient descent.
Each algorithm is defined based on a $d$-dimensional \textit{probing signal}.   For simplicity, we impose the following normalization conditions, unless stated otherwise:
\begin{subequations}
	\begin{align}  
	\lim_{T \rightarrow \infty }\frac{1}{T} \int_{t = 0}^T \qsaprobe_t \, dt
	&=  0
	\label{eq:zero_mean}
	\\
	\lim_{T \rightarrow \infty }\frac{1}{T} \int_{t = 0}^T \qsaprobe_t \qsaprobe_t^\transpose \, dt &= I
	\label{eq:unit_cov}
	\end{align} 
	\label{e:prob_mean+cov}%
\end{subequations}  This is easily arranged using a mixture of sinusoids.

The first example is a straightforward translation of \eqref{e:one-meas-SPSA}:
\begin{programcode}{qSGD  \#1}%
	For a  given   $d\times d$ positive definite matrix $G$, and   $\ODEstate_0\in\Re^d$,
	\begin{subequations}
		\begin{align}
		\ddt \ODEstate_t &= -  a_t\frac{ 1}{\epsy}G \qsaprobe_t    \Obj( \preODEstate_t )
		\label{e:ESa}
		\\
		\preODEstate_t& =  \ODEstate_t + \epsy \qsaprobe_t
		\label{e:ESb}
		\end{align}
		\label{e:ES}
	\end{subequations}%
\end{programcode}%

The second example falls into the class of ESC algorithms:
\begin{programcode}{qSGD  \#2}%
	For a given   $d\times d$ positive definite matrix $G$, and initial condition $ \ODEstate_0$,
	\begin{subequations}
		\begin{align}
		\ddt \ODEstate_t &= -  a_t \frac{ 1}{\epsy}G    \qsaprobe_t'  \ddt \Obj( \preODEstate_t )
		\label{e:QGDa}
		\\
		\preODEstate_t& =  \ODEstate_t + \epsy \qsaprobe_t
		\end{align}
		\label{e:QGD}
	\end{subequations}%
	where $\qsaprobe_t' =  \ddt \qsaprobe_t $.
	\\
\end{programcode}  

The algorithm qSGD \#1  takes the form \eqref{e:QSAgen}, with
\begin{equation} \label{e:qsgd1}
f(\ODEstate_t,\qsaprobe_t)  = - \frac{ 1}{\epsy} G \qsaprobe_t    \Obj(  \ODEstate_t + \epsy \qsaprobe_t  )
\end{equation}
Under \eqref{e:prob_mean+cov}, a second order Taylor series expansion gives
\begin{align}
\hspace*{-0.5cm}  
\barf_\epsy(\theta) \eqdef \lim_{T \rightarrow \infty }\frac{1}{T} \int_{t = 0}^T f (\theta, \qsaprobe_t) \,  dt
=
-G \nabla \Obj  (\theta) +\Err (\epsy)
\label{e:ergodicQSA_ES1}
\end{align}
where $\| \Err (\epsy)\| = O(\epsy^2)$ if   $\Obj$ is a $C^2$ function.   A similar approximation holds for qSGD~\#2.

The representation \eqref{e:ergodicQSA_ES1}  suggests that the algorithm will approximate gradient descent, provided the right hand side of
\eqref{e:ergodicQSA_ES1} and \eqref{e:qsgd1} is Lipschitz continuous (cf.~Assumption (A2)). However, in many problems  we know that  $\nabla\Obj$ is globally Lipschitz continuous, but $\Obj$ is \textit{not}.  Consider for example a quadratic function.   In this case $f$ is not Lipschitz continuous, contradicting Assumption (A2) of the QSA theory. We can only guarantee global convergence of \eqref{e:ES} if we employ projection of estimates onto a compact region.

A slightly more complex algorithm resolves this issue.   This is again a simple extension of one of Spall's SPSA algorithms with two function evaluations.
\begin{programcode}{qSGD  \#3}%
	For a given   $d\times d$ positive definite matrix $G$, and initial condition $ \ODEstate_0$,
	\begin{equation}
	\ddt \ODEstate_t  = -   a_t \frac{ 1}{ 2 \epsy} G \qsaprobe_t  \Bigl\{   \Obj( \ODEstate_t + \epsy \qsaprobe_t  )   -    \Obj( \ODEstate_t - \epsy \qsaprobe_t  )   \Bigr\}
	\label{e:ESlip} 
	\end{equation}%
\end{programcode}%
Denoting by $ a_t f(\ODEstate_t,\qsaprobe_t) $ the right hand side of  \eqref{e:ESlip}, we can show that $f(\theta, \qsaprobe) = -G \qsaprobe \qsaprobe^\transpose \nabla L (\theta) + O(\varepsilon^2)$; it is Lipschitz in its first variable whenever this is true for $\nabla\Obj$.

The vector field $\barf_\epsy$ for  \eqref{e:ESlip} admits the same approximation \eqref{e:ergodicQSA_ES1} under slightly milder conditions on the probing signal with the zero-mean assumption \eqref{eq:zero_mean} dropped.  
Moreover, the following global consistency result can be established:

\begin{proposition}
	\label[proposition]{t:A3}
	Suppose that  the following hold for function and algorithm parameters in  qSGD~\#3:
	\begin{romannum}
		\item Assumption~(A1) holds.
		
		\item The probing signal satisfies \eqref{eq:unit_cov}.
		
		\item 
		$\nabla L$  is globally   Lipschitz continuous,  and $L$  is  strongly convex, with  unique minimizer $\theta^\ocp \in  \Re^d$.
	\end{romannum}
	Then 
	for each $\epsy > 0$, there is a unique root $ \theta^\ocp_\epsy$ of $\barf_\epsy$,   satisfying $\| \theta^\ocp_\epsy -  \theta^\ocp \| \le O(\epsy^2)$.  And convergence holds from each initial condition: $ \displaystyle \lim_{t\to\infty}  \ODEstate_t    =  \theta^\ocp_\epsy $.     
	\qed
\end{proposition}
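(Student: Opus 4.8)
The plan is to reduce everything to properties of the mean vector field $\barf_\epsy$ of qSGD~\#3 and then invoke the general QSA convergence theory. Two structural facts are the starting point, both essentially recorded in the text preceding the proposition: (a) $f(\theta,\qsaprobe)=-\tfrac{1}{2\epsy}G\qsaprobe\{\Obj(\theta+\epsy\qsaprobe)-\Obj(\theta-\epsy\qsaprobe)\}$ is Lipschitz in $\theta$ uniformly in $\qsaprobe$ whenever $\nabla\Obj$ is globally Lipschitz, so Assumption~(A2) holds for this $f$ \emph{without} any projection; and (b) a Taylor expansion of the centred difference, in which the even-order terms cancel, combined with the normalisation \eqref{eq:unit_cov}, gives
\[
\barf_\epsy(\theta)=-G\nabla\Obj(\theta)+E_\epsy(\theta),\qquad \sup_{\theta\in\Re^d}\|E_\epsy(\theta)\|=O(\epsy^2),
\]
with $E_\epsy$ itself globally Lipschitz. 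Throughout I take $G$ symmetric positive definite and use the inner product $\langle x,y\rangle_G\eqdef x^\transpose G^{-1}y$.

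Existence, uniqueness, and the $O(\epsy^2)$ bound for the root $\theta^\ocp_\epsy$ of $\barf_\epsy$ I would obtain by monotone-operator reasoning. Strong convexity of $\Obj$ (modulus $m>0$) makes $\nabla\Obj$ strongly monotone, and the identity $\langle G\nabla\Obj(\theta)-G\nabla\Obj(\theta'),\theta-\theta'\rangle_G=\langle\nabla\Obj(\theta)-\nabla\Obj(\theta'),\theta-\theta'\rangle\ge m\|\theta-\theta'\|^2$ shows $H\eqdef G\nabla\Obj$ is strongly monotone in the $G$-geometry, hence a homeomorphism of $\Re^d$. Rewriting the root equation as $H(\theta)=E_\epsy(\theta)$ and applying the standard perturbation argument for strongly monotone maps (equivalently, a contraction argument for $H^{-1}\circ E_\epsy$, using the small Lipschitz constant of $E_\epsy$) yields a unique $\theta^\ocp_\epsy$; testing the monotonicity inequality at $\theta^\ocp_\epsy$ and $\theta^\ocp$, with $H(\theta^\ocp)=\Zero$, gives $m\|\theta^\ocp_\epsy-\theta^\ocp\|^2\le\langle E_\epsy(\theta^\ocp_\epsy),\theta^\ocp_\epsy-\theta^\ocp\rangle_G\le \|G^{-1}\|\,\|E_\epsy(\theta^\ocp_\epsy)\|\,\|\theta^\ocp_\epsy-\theta^\ocp\|$, i.e. $\|\theta^\ocp_\epsy-\theta^\ocp\|=O(\epsy^2)$.

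For the convergence $\ODEstate_t\to\theta^\ocp_\epsy$ I would first prove global asymptotic stability of the mean flow $\ddt\odestate_t=\barf_\epsy(\odestate_t)$ with equilibrium $\theta^\ocp_\epsy$, and then quote the QSA convergence results of \Cref{s:QSAconvergence}: under (A1), (A2), boundedness of trajectories of \eqref{e:QSAgen} (which the coercivity forced by strong convexity secures, via the Borkar--Meyn-type criterion of that appendix), and this global stability, the trajectories converge to the equilibrium. Stability I would extract from the Lyapunov function $V(\odestate)=\tfrac12\langle\odestate-\theta^\ocp_\epsy,\odestate-\theta^\ocp_\epsy\rangle_G$, which is positive definite and radially unbounded; along solutions $\ddt V=\langle\odestate-\theta^\ocp_\epsy,\barf_\epsy(\odestate)\rangle_G=-\langle\odestate-\theta^\ocp_\epsy,\bar g_\epsy(\odestate)-\bar g_\epsy(\theta^\ocp_\epsy)\rangle$ with $\bar g_\epsy\eqdef-G^{-1}\barf_\epsy$, so GAS reduces to \emph{strict monotonicity of $\bar g_\epsy$}. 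Expanding $\bar g_\epsy$ along the segment from $\theta^\ocp_\epsy$ to $\odestate$ and using $\langle\nabla\Obj(p+\epsy\qsaprobe)-\nabla\Obj(p-\epsy\qsaprobe),2\epsy\qsaprobe\rangle\ge 4m\epsy^2\|\qsaprobe\|^2$ together with \eqref{eq:unit_cov} produces $\langle\bar g_\epsy(\odestate)-\bar g_\epsy(\theta^\ocp_\epsy),\odestate-\theta^\ocp_\epsy\rangle\ge(m-c\epsy)\|\odestate-\theta^\ocp_\epsy\|^2$ with $c$ depending only on $\Obj$ and the probing signal, so $\ddt V<0$ off the equilibrium.

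The main obstacle is precisely this last monotonicity estimate. Strong convexity of $\Obj$ passes through the ergodic average only up to the same $O(\epsy^2)$-type slack that perturbs $\barf_\epsy$ away from $-G\nabla\Obj$, so the clean Lyapunov decrease (and likewise the uniqueness of $\theta^\ocp_\epsy$) is transparent only for $\epsy$ small; to reach \emph{every} $\epsy>0$ one must argue monotonicity of $\bar g_\epsy$ intrinsically — showing that the averaged symmetric matrix $\lim_{T\to\infty}\tfrac1T\int_0^T\tfrac12(\qsaprobe_t u_t^\transpose+u_t\qsaprobe_t^\transpose)\,dt$, with $u_t=\nabla\Obj(p+\epsy\qsaprobe_t)-\nabla\Obj(p-\epsy\qsaprobe_t)$, dominates $mI$ — rather than by a perturbative comparison with $-G\nabla\Obj$. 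The $O(\epsy^2)$ rate in the first step is the other point where slightly more than the bare Lipschitz-gradient hypothesis is used (cancellation of the even-order Taylor terms). The remaining ingredients — the Lipschitz bounds, radial unboundedness of $V$, and the appeal to the general QSA convergence theorem — are routine.
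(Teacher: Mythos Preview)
Your approach is correct and aligns with the paper's, but you are considerably more careful. The paper's proof is a short paragraph: it records that $f(\theta,\qsaprobe)=-G\qsaprobe\qsaprobe^\transpose\nabla\Obj(\theta)+O(\epsy^2)$ and $\barf_\epsy(\theta)=-G\nabla\Obj(\theta)+O(\epsy^2)$, asserts that strong convexity gives a unique root $\theta^\ocp_\epsy$ with $\nabla\Obj(\theta^\ocp_\epsy)=O(\epsy^2)$ and that (A3) therefore holds, invokes the general QSA convergence theory, and then derives the $O(\epsy^2)$ distance bound directly from the function-value form of strong convexity,
\[
\Obj(\theta^\ocp)\ge \Obj(\theta^\ocp_\epsy)+\nabla\Obj(\theta^\ocp_\epsy)^\transpose(\theta^\ocp-\theta^\ocp_\epsy)+\tfrac{\eta}{2}\|\theta^\ocp_\epsy-\theta^\ocp\|^2,
\]
which together with $\Obj(\theta^\ocp)\le\Obj(\theta^\ocp_\epsy)$ yields $\|\theta^\ocp_\epsy-\theta^\ocp\|\le\tfrac{2}{\eta}\|\nabla\Obj(\theta^\ocp_\epsy)\|=O(\epsy^2)$. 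Your monotone-operator argument in the $G$-geometry is the gradient-side version of the same inequality and is equivalent; the paper's formulation is slightly quicker because it avoids the detour through $H^{-1}\circ E_\epsy$.

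Where you go further than the paper is in honestly flagging two points the paper glosses over. First, the paper simply \emph{asserts} (A3) once the root is unique, whereas you attempt a Lyapunov construction and correctly observe that the clean decrease $\ddt V\le -(m-c\epsy)\|\odestate-\theta^\ocp_\epsy\|^2$ is only transparent for small $\epsy$; for arbitrary $\epsy>0$ one needs an intrinsic monotonicity argument for the averaged operator, which neither you nor the paper supplies in full. Second, you rightly note that the $O(\epsy^2)$ Taylor remainder uses cancellation of even-order terms and hence a bit more smoothness than bare Lipschitz continuity of $\nabla\Obj$; the paper uses the same expansion without comment. These are genuine soft spots in the statement as written, and your proposal handles them at least as well as the original.
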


\begin{proof}
	The hypotheses of the proposition imply that Assumptions (A1)-(A2) of the QSA theory hold for $f(\theta, \qsaprobe) = -G \qsaprobe \qsaprobe^\transpose \nabla L (\theta) + O(\varepsilon^2)$ and $\barf_\epsy$ defined in \eqref{e:ergodicQSA_ES1}. Since $L$ is strongly convex, it holds that there is a unique solution to $\nabla L (\theta) = O(\epsy^2)$ for any $\epsy > 0$, so that Assumption (A3) holds as well. Therefore, for each $\epsy > 0$, $\ODEstate_t$ converges to the unique root $ \theta^\ocp_\epsy$ of  $\barf_\epsy$ satisfying $\nabla L (\theta^\ocp_\epsy) = O(\epsy^2)$. Due to strong convexity, we have:
	\begin{align*}
	L(\theta^*) \geq L(\theta^*_\epsy) + (\nabla L(\theta^*_\epsy))^\transpose (\theta^* - \theta^*_\epsy) + \frac{\eta}{2}\|\theta^*_\epsy - \theta^* \|^2
	\end{align*}
	for some $\eta > 0$. Therefore
	\begin{align*}
	\frac{\eta}{2}\|\theta^*_\epsy - \theta^* \|^2 &\leq 
	L(\theta^*) - L(\theta^*_\epsy) + (\nabla L(\theta^*_\epsy))^\transpose ( \theta^*_\epsy - \theta^*)  \\
	& \leq \|\nabla L(\theta^*_\epsy)\| \|\theta^*_\epsy - \theta^* \|
	\end{align*}
	implying that $\|\theta^*_\epsy - \theta^* \| \leq O(\epsy^2)$.
\end{proof}

\begin{figure*}
	\Ebox{1}{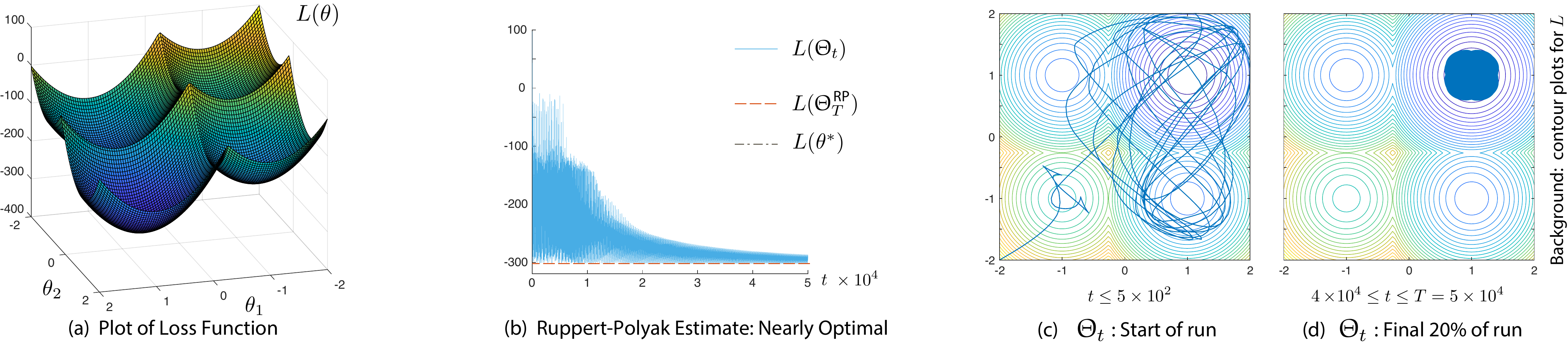}
	\caption{Minimizing a loss function with multiple local minima:  While $\Obj(\ODEstate_t)$ is highly oscillatory,  the estimate $\ODEstate^{\text{RP}}_T$ is nearly optimal.
	}
	\label{f:A4LocalMins_values}
\end{figure*}

\head{qSGD and simulated annealing}

The primary motivation for SPSA, ESC, and qSGD algorithms  is that they can be run based purely on observations of the loss function.   A secondary benefit is that the probing can be designed to emulate a simulated annealing algorithm.   The example described below was designed to illustrate this point.

The first plot on the left in  \Cref{f:A4LocalMins_values} is a highly non-convex function  $\Obj$, defined as the
``soft min'' of convex quadratic functions:
\[
\Obj(\theta) = - \log\Bigl( \sum_{k=1}^4 \exp\bigl(-  \{ z_i +  \| \theta - \theta^i \|^2 \} /\sigma^2 \bigr) \Bigr)
\]
with  $\sigma=1/10$,   and
\[
\textstyle
\{ [ \theta^i , z_i  ] \} =
\bigl\{   \bigl[  \binom{-1}{-1}, -1 \bigr]
\,,
\  
    \bigl[  \binom{-1}{1}, -2 \bigr]
\,,
\  
    \bigl[  \binom{1}{-1}, -2 \bigr]
\,,
\  
    \bigl[  \binom{1}{1}, -3 \bigr]    \bigr\} 
\]
The minimizer   is     $\theta^\ocp \approx  \binom{1}{1}$,  with $L(\theta^\ocp)\approx -300$.

The gain process $ a_t = \min \{  \bara,   (1+t)^{-\rho} \}$ was used
with  $\rho =0.9$ and $\bara=10^{-3}$. The probing signal was chosen to satisfy \eqref{e:prob_mean+cov}:
\[
\qsaprobe_t =  \sqrt{2}   [  \sin(  t\omega_1) ,      \sin(  t\omega_2) ]^\transpose   
\]
with $\omega_1=1/4$ and $\omega_2= 1/e^2$ chosen   to obtain attractive plots---higher frequencies lead to faster convergence.   The value $\epsy=0.15$ was chosen for the scaling (recall \eqref{e:ESb}).
These meta-parameters were obtained by trial and error: if $\bara$ or $\epsy$ is too small, then we are   trapped in a local minima.

The ODE was approximated using a standard Euler scheme with 1~sec sampling interval: the crude ODE approximation led to the requirement that $\omega_1/\omega_2$ is irrational.

The two plots on the right hand side in
\Cref{f:A4LocalMins_values}
show the evolution of $\ODEstate_t$ in $\Re^2$ for $0\le t \le 5\times 10^4$,  with $\ODEstate_0 = (-2,-2)^\transpose$.   
Plots (b) and (d) indicate that the estimates exhibit significant variation throughout the run, but in plot (d) it is clear that they are trapped within the region of attraction of the global minimum.   

What's more, averaging is highly successful: the estimate
$\ODEstate^{\text{RP}}_T$ was   obtained as  the average of $\ODEstate_t$ over the final 20\%\ of the run.    
It is found that $\Obj(\ODEstate^{\text{RP}}_T)$ is only a small fraction of one percent greater than $\Obj(\theta^\ocp)$.

\section{Quasi Policy Gradient Algorithms}
\label{s:ActorCriticQSA}

A simple example is illustrated in \Cref{f:MountainCar}, in which the two dimensional state space is   position and velocity:
\[
x^k\in \state = [z^{\text{min}},  z^{\text{goal}}   ]\times [-\barv, \barv]  
\]
where $z^{\text{min}}$ is a lower limit for position,   and the target position is $z^{\text{goal}}$.  The velocity is bounded in magnitude by $\barv>0$.
The input $u$ is the throttle position (which is negative when the car is in reverse).  
The special case adopted in   \cite[Ch.~10]{sutbar18} is modeled in discrete time:
\begin{equation}  
\begin{aligned}
z_{k+1}  &=   \llbracket  x_1^k + x_2^k     \rrbracket_1
\\
v_{k+1}          & =  \llbracket  v_{k}   + 10^{-3}  u_k  -  2.5\times10^{-3}  \cos(3 z_{k})  \rrbracket_2
\end{aligned} 
\end{equation}
with  $z^{\text{min}} = -1.2$,  \ $ z^{\text{goal}} = 0.5$, \  and \  $\barv = 7\times 10^{-2}$.
The brackets are projecting the values of $z_{k+1} $ to the interval $ [z^{\text{min}},  z^{\text{goal}}   ] = [-1.2, 0.5]$,
and   $v_{k+1} $ to the interval $ [-\barv, \barv]  $.    

\begin{wrapfigure}[8]{R}{0.25\textwidth}
	
	\vspace{-.5em}
	\centering 
	\includegraphics[width=1\hsize]{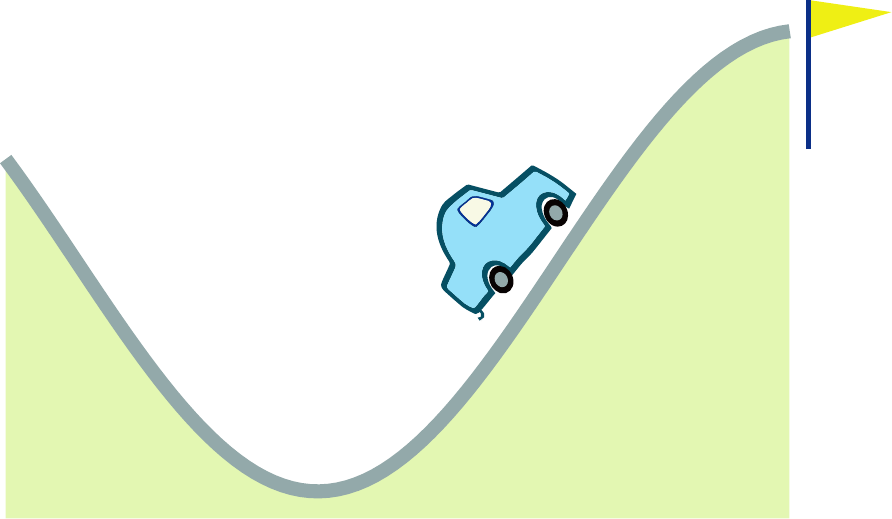}
	\caption{Mountain Car}
	\label{f:MountainCar}
\end{wrapfigure}

Due to state and input constraints,  a feasible policy will sometimes put the car in reverse, and travel at maximal speed away from the goal to reach a higher elevation to the left.  Several cycles back and forth may be required to reach the goal.

The control objective is to reach the goal in minimal time, but this can also be cast  as a total cost optimal control problem.   Let $x^e = ( z^{\text{goal}} , 0)^\transpose  $,   and reduce the state space so that $x^e$ is the only state $x=  ( z,v)^\transpose \in\state$  satisfying $z = z^{\text{goal}}$.  
Let $c(x,u) = 1$ for all $x,u$ with $x\neq  x^e $,   and $c( x^e ,u) \equiv 0$.

For $\theta\in\Re$, consider the policy  
\begin{equation} 
u_k = \phi_\theta(x^k) = 
\begin{cases}
1   \quad   &  \textit{if} \ \ z_{k} + v_k  \le \theta
\\
\text{sign}(v_k )  \qquad  & \textit{else} \end{cases} 
\label{e:Aggressive_theta}
\end{equation}  
The policy accelerates the car towards the goal whenever the estimate   $\haz_{k+1} = z_k + v_k$ is at or below the threshold $\theta $.

\Cref{f:ThreePlotsPolicyMC_theta_mp8+0}  shows trajectories of position as a function of time from three initial conditions, and with two instances of this policy:  $\theta=-0.8$,  and $\theta=-0.2$.    The former is a much better choice from initial condition $z(0) =-0.6$:
we see that the time to reach the goal is nearly twice as long when using $\theta=-0.2$ as compared to  $\theta=-0.8$.

Application of qSGD to estimate $\theta^\ocp$ is a form of policy gradient (PG) reinforcement learning.

\begin{figure}[h]
	
	\Ebox{1.0}{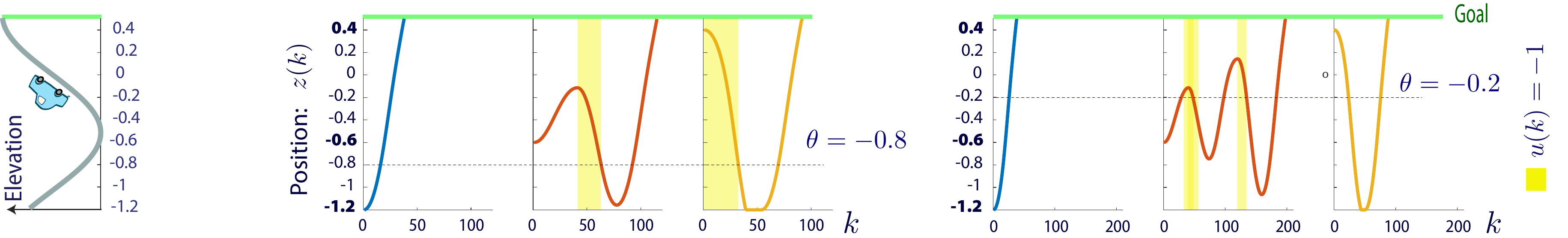}
	\caption{Trajectories for the Mountain Car for two policies (differentiated by $\theta$), and three initial positions.}
	\label{f:ThreePlotsPolicyMC_theta_mp8+0}
\end{figure}

The total cost in this example coincides with the time to reach the goal.   For fixed initial condition $x^0\in\Re^d$,  we might apply qSGD to estimate the minimizer of the corresponding total cost $J_\theta(x^0)$ over $\theta$.   To avoid infinite values, we consider the   loss function   $\Obj(\theta ) = \min\{ J^{\text{max}}, J_\theta(x^0) \}$; 
the value $J^{\text{max}} = 5\times 10^3$ was used in these experiments.   

A discrete-time counterpart of qSGD  \#1\  is 
\begin{subequations}
	\begin{align}
	\ODEstate_{n+1} &= \ODEstate_n + \alpha_{n+1}  \frac{1}{\epsy} G \qsaprobe_{n+1}   \Obj (\preODEstate_{n+1} ) 
	\\
	\preODEstate_{n+1}           & =  \ODEstate_n + \epsy \qsaprobe_{n+1}
	\label{e:ESdiscrete_b}%
	\end{align}%
	\label{e:ESdiscrete}%
\end{subequations}%
The algorithm is \textit{episodic} in the sense that the observation of $\Obj( \preODEstate_{n+1} ) $ is obtained only when the car reaches the goal, or the time limit $ J^{\text{max}}$ is reached.  

It may be more valuable to introduce randomization in the initial condition.  In this case we introduce a second probing signal $\{ \qsaprobe^x_n \}$.   We proceed as above, but define
\begin{equation}
L( \preODEstate_{n+1}) = J_\theta(x^{n+1}) \text{ with  }  x^{n+1} = \llbracket   x^0 + \epsy^x \qsaprobe^x_{n+1}  \rrbracket
\label{e:QuasiRandomIC}
\end{equation}
where the brackets again indicate projection onto the state space $\state$.  The goal then is to minimize the \textit{average cost}:  
\begin{equation}
\Obj(\theta) =
\Expect[J_\theta(X) ] =  \lim_{N\to\infty}  \frac{1}{N} \sum_{n=1}^N  
\min\{J^{\text{max}},  J_\theta(\qsaprobe^x_n)  \} 
\label{e:ACMC_J}
\end{equation}

The signal $\{\qsaprobe^x_n =  (\qsaprobe^z_n ,\qsaprobe^v_n )^\transpose \}$ was chosen to cover the state space uniformly:     introduce two signals that are quasi-uniform and independent on $[0,1]$:
\notes{refer to a lemma on why these are uniform and independent}
\[
\clW_n^v =  \text{frac}(n r_v )  \,,\qquad 
\clW_n^z =  \text{frac}(n r_z )  \,, 
\]
where  ``frac'' denotes the fractional part of a real number,   $r_v, r_z$ are irrational, and their ratio is also irrational.  
Then define 
\begin{equation}
\begin{aligned}
\qsaprobe^v_n  &=   \barv (2 \clW_n^v   -1)  
\\
\qsaprobe^z_n  & =   z^{\text{min}}  + [  z^{\text{goal}}  -z^{\text{min}}  ] \clW_n^z 
\end{aligned} 
\label{e:qsaprobeX}
\end{equation} 
The values $r_v = \pi$ and $r_z = e$ were chosen in these    experiments. Also,  $ x^0 = 0$ and $ \epsy^x =0$ in \eqref{e:QuasiRandomIC},  giving  $x_{n+1} =  \qsaprobe^x_{n+1}   $.

\begin{figure}[h]
	\Ebox{.65}{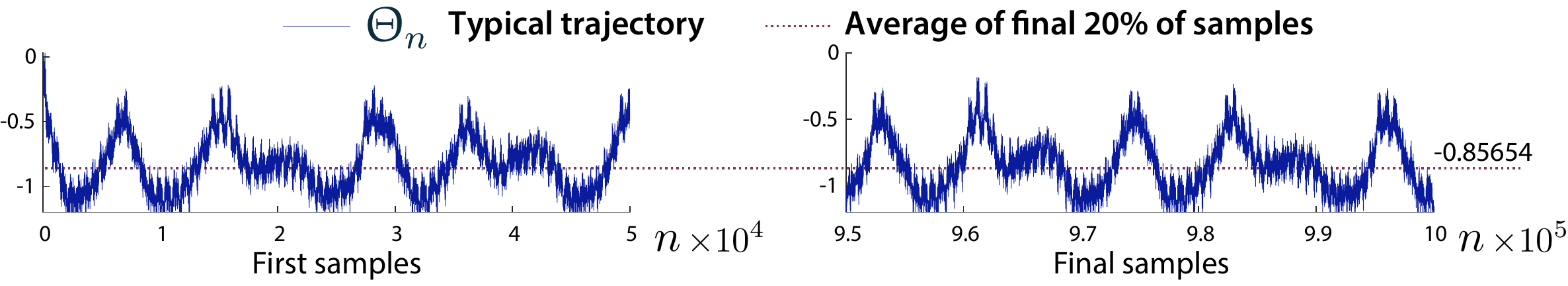}
	\caption[qPG~\#1 via Gradient-Free Optimization]{qSGD~\#1    for the Mountain Car using  the gradient-free optimization algorithm \eqref{e:ESdiscrete} using a large constant step-size.
	} 
	\label{f:QSA_theta_epsy} 
	\vspace{-.5em}
\end{figure}

A run using the episodic algorithm
(\ref{e:ESdiscrete}, \ref{e:QuasiRandomIC}) is shown in 
\Cref{f:QSA_theta_epsy}, with constant step-size $\alpha_n = 0.1$,    $\epsy = 0.05$,  and $ \qsaprobe_n =\sin(    n) $.  The large step-size was chosen simply to illustrate the exotic nonlinear dynamics that emerge from this algorithm.   It would seem that the algorithm has failed, since the estimates oscillate between $-1.2 $ and $-0.3$ in steady-state, while the actual optimizer is   $\theta^\ocp\approx -0.8$.   The dashed line shows the average of $\{\ODEstate_n \}$ over the final 20\%\ of estimates.   This average is very nearly optimal, since the objective function is nearly flat for $\theta$ near the optimizer.

\begin{figure*}[h]
	\centering
	\includegraphics[width=\hsize]{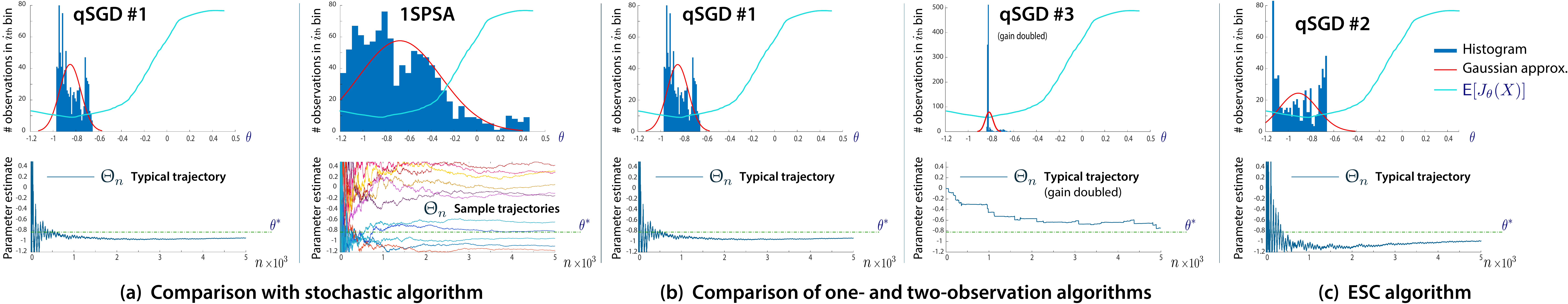}
	\caption{PG for Mountain Car:   (a)  QSA and traditional randomized exploration,
		(b) qSGD \#1 and \#3,
		(c)
		using   qSGD \#2:    \Cref{e:ES2}
	}  
	\label{f:QSA+MC-ALL} 
	\vspace{-.5em}
\end{figure*}

\Cref{f:QSA+MC-ALL}~(a)  shows results from   $10^3$ independent runs, each with horizon length $\Hor = 10^4$.   In each case, the parameter estimates evolve according to \eqref{e:ESdiscrete},  to obtain estimates $\{ \ODEstate_n^i : 1\le n\le \Hor \,, \ 1\le i\le 10^3\}$.  The two columns are distinguished by  the probing signals $\qsaprobe_n$ and $\qsaprobe^x_n $.  For QSA the probing signal $\bfqsaprobe$ was a sinusoid,  with phase $\phi$ selected independently in the interval $[0,1)$,  in each of the $10^3$ runs.
The probing sequence $\bfqsaprobe^x$ was fixed as \eqref{e:qsaprobeX}.

\notes{old caption:   The first column shows results using quasi gradient ascent using QSA, and the second using i.i.d.\ exploration (similar to the Kiefer Wolfowitz approach).   The final estimates vary significantly on this short run, with time horizon $\Hor = 5\times 10^3$.   In the case of QSA,  the final estimates are always found in an interval near $\theta^\ocp$,  and on this interval $\Expect[J_\theta(X) ]$ is nearly constant.    The estimates obtained using i.i.d.\ exploration show far more variability,   with many extreme outliers for which $\theta_\Hor$ is near the maximum of $\Expect[J_\theta(X) ]$,  with time horizon doubled to  $\Hor =  10^4$.     The plots shown in the lower right hand corner are selected trajectories for illustration, including several of the worst outliers.  }

The results displayed in the second column  of \Cref{f:QSA+MC-ALL}~(a)  used an independent sequence for the probing signals,  each uniform on their respective ranges  (in particular,  the distribution of $\qsaprobe_n$ was chosen uniform on the interval $[-1,1]$ for each $n$).    The label 1SPSA refers to the algorithm of Spall based on i.i.d.\ exploration \cite{spa97,spa03} (recall \eqref{e:one-meas-SPSA}).

\sfb{ (see the \textit{Notes} section at the end of this chapter for history).
}

It was found that qSGD  \#3\ results in far more reliable estimates:  see comparison in \Cref{f:QSA+MC-ALL}~(b).    However, this method will not be reliable in the presence of system disturbances, since it requires on two observations of the objective function.

Algorithm qSGD  \#2\  is also easily adapted to this application:
\begin{subequations}
	\begin{align}
	\ODEstate_{n+1} &= \ODEstate_n + \alpha_{n+1}  \frac{1}{\epsy} G  \qsaprobe_{n+1} '  \,   \Obj (\preODEstate_{n+1} )'
	\\
	\preODEstate_{n+1}           & =  \ODEstate_n + \epsy \qsaprobe_{n+1}
	\end{align}%
	\label{e:ES2}%
\end{subequations}%
where the primes denote approximations of the derivatives appearing in \eqref{e:QGDa}: 
\[
\qsaprobe_{n+1} '  = \frac{1}{\delta} \bigl(   \qsaprobe_{n+1} - \qsaprobe_n \bigr) \,, 
\quad
\Obj (\preODEstate_{n+1} )'   =  \frac{1}{\delta} \bigl(   \Obj (\preODEstate_{n+1} ) - \Obj (\preODEstate_{n} ) \bigr)
\]
with  $\delta>0$ the sampling interval.

%

A histogram and sample path of parameter estimates are shown in \Cref{f:QSA+MC-ALL}~(c) , based on algorithm \eqref{e:ES2}   with $\delta^2 = 0.5$,  and  all of the same choices for parameters, except that the step-size was reduced to avoid large initial transients:
$
\alpha_n = \min(1/n^{0.75} ,
0.05 )
$.
This results in $ \alpha_n =  1/n^{0.75}  $ for $n\ge 55$.

Based on the histogram, the performance appears slightly worse than  observed for method~\#1 in \Cref{f:QSA+MC-ALL}~(a) ,  but these outcomes are a product of particular choices for algorithm parameters.   

\notes{Figure commented out:
	\Cref{f:QSA_Hist_Hor1e4_method2_bigProbe}:
	$
	\epsy_n =  \alpha_n^{0.75}
	$.}

%
%
%
%

\notes{see commented text for parameters}
%
%
%
%
%
%
%
%
%
%
%
%
%
%
%

\section{Conclusions}
\label{s:conc}

It is evident that QSA is worthy of far greater attention in terms of both theory and applications.   The case for applications to optimization and optimal control was made in  \Cref{s:extremeQSA,s:ActorCriticQSA}, where the most obvious examples are deterministic control systems, such as in robotics. 
Theoretically speaking, we have shown under general conditions that the rate of convergence is much faster for QSA as opposed to SA. 
If there is one take-home message from these results, it is this:  \emph{do not inject randomness in your algorithm}.

There are several open questions for future research:
\\
$\bullet$ 
Theory in the quasi Monte-Carlo literature may shed doubt on the $1/t$  convergence rate established in \Cref{t:Couple_main,t:Couple_rhoRP},   since it is well known that the optimal rate is of the form $[\log(t)]^d/t$,  with $d$   the dimension  (Section 3 of Chapter~9 of \cite{asmgly07} contains an accessible survey).    It may be that sharper results obtained in the present paper are a result of the smoothness conditions imposed on the functions involved. 
Another possibility is that the theory of QMC is posed in discrete time,  and that  an Euler approximation will destroy the $1/t$ rate.   If so, this   motivates   a more accurate ODE approximation.      \notes{WIP:  momentum etc here}
\\
$\bullet$   The convergence-rate obtained in \Cref{t:Couple_rhoRP} for Ruppert-Polyak averaging is currently a mystery, because the vector $\barUpupsilon$ defined in (A5) is not well understood.  
A current topic of research concerns conditions under which this is zero, so we obtain the optimal $1/t$ convergence rate.

More exciting questions concern the creation of more efficient schemes to optimize the convergence rate.   The crisp approximation obtained in    \Cref{t:Couple_main} will likely lead to new approaches.
\\
$\bullet$ 
\textit{What about high dimensions?}
The qSGD algorithms are easy to code, and quickly converge to an approximately optimal parameter for the examples considered in this paper.    In high dimensions we can't expect to blindly apply any of these algorithms.    If $\theta\in\Re^d$ with large $d$,  then the complexity of the probing signal must grow as well.
For example,  consider the choice of probing signal \eqref{e:SawSinusoidalQSA},   where $i$ ranges from $1$ to $d=K=1000$.   If the frequencies $\{\omega_i\}$ are chosen in a narrow range,  then the limit \eqref{eq:unit_cov} will converge very slowly.     The rate will be faster if the frequencies are widely separated, but we then need a much higher resolution Euler approximation to implement an algorithm.     

This challenge is well understood in the optimization literature.    One approach to create a reliable algorithm is to employ \textit{block coordinate descent} to effectively reduce the dimension of the optimization problem. This requires two ingredients:
\begin{romannum}
\item   A sequence of timepoints $T_0 = 0<T_1 < T_2< \cdots$
\item  A sequence of ``parameter blocks''   $B_k \subset \{1,\dots,d \}$ for each $k\ge 0$,  where the number of elements $d_B$ in $B_i$ is far smaller than $d$. 
\end{romannum}
The qSGD ODE \eqref{e:qSGD} is modified so that   $ \ODEstate_t(i)$ is held constant on the interval  $[T_k, T_{k+1})$ for $i\not\in B_k$,    and
\[
\ddt \ODEstate_t(i  )  = - a_t [ \tilnabla_\Obj(t) ]_i\,\qquad i \in B_k\,, \ \ t\in [T_k, T_{k+1})
\]
The alternating direction method of multipliers (ADMM)  employs a similar scheme.

%
%
%
%

 \notes{see commented text for parameters}
%
%
%
%
%
%
%
%
%
%
%
%
%
%
%

%
%
%
%
%
%

\addcontentsline{toc}{section}{References}

\bibliographystyle{abbrv}
\bibliography{strings,markov,q,QSA,bandits} 

\clearpage
\appendix

\centerline{\LARGE \bf Appendices}

\bigskip

This Appendix is organized into three sections:  some of the convergence theory in 
\Cref{s:QSAconvergence} is taken from   \cite{berchecoldalmehmey19a,berchecoldalmehmey19b} (duplicated here for convenience).   
The material supporting the proof of \Cref{t:convergenceBM_QSA}  is all new.
    \Cref{s:quasiMarkov} contains theory to justify Assumption~(A5),  and \Cref{s:tech} contains   proofs of the main results related to rates of convergence for QSA.

\section{Convergence of QSA}   
\label{s:QSAconvergence}

Analysis of QSA is based on consideration of the associated  ODE \eqref{e:ODE_SA} in which the ``averaged'' vector field is introduced in \eqref{e:ergodicA1}: 
\begin{equation}
\barf(\theta) = \lim_{T\rightarrow\infty}\frac{1}{T}\int_0^T f(\theta,\qsaprobe_t)\, dt,\ \ \textrm{for all }\theta\in\Re^d.
\label{e:ergodicQSA}
\end{equation}
The first step in the theory is to find assumptions  to ensure that the limit  exists.   Following this,  stability conditions are found for  the averaged ODE  \eqref{e:ODE_SA} that ensure that ODE \eqref{e:ODE_SA} and the  QSA ODE \eqref{e:QSAgen} converge to the same limit.

 \notes{\rd{Now very little is from CDC 2019!   Note that there is a complete proof of the Borkar Meyn result}  }


The starting point in an ODE approximation is a temporal transformation:
substitute in \eqref{e:QSAgen} the new time variable given by
\begin{equation}
	\SAtime= g_t \eqdef  \int_0^t a_r\, dr,\qquad t\ge 0.
\label{e:SAtime}
\end{equation} 
The time-scaled process is then defined by  
\begin{equation}
\haODEstate_{\SAtime} \eqdef \ODEstate(g^{-1}(\SAtime)) = \ODEstate_ t   \Big| _{t = g^{-1}(\SAtime) }
\label{e:varscaled}
\end{equation}  
For example, if $a_r=(1+r)^{-1}$, then 
\begin{equation}
\SAtime=\log(1+t) \ \
	\text{and} \ \ 
\qsaprobe(g^{-1}(\SAtime)) = \qsaprobe(e^\SAtime-1).
\label{e:a-r-inv}
\end{equation}

The chain rule of differentiation gives 
\begin{equation*}
\frac{d}{d\SAtime}\ODEstate(g^{-1}(\SAtime)) = f(\ODEstate(g^{-1}(\SAtime)),\qsaprobe(g^{-1}(\SAtime)).
\end{equation*}  
That is,  the time-scaled process solves the ODE,
\begin{equation}
\label{e:QSAgenscaled}
\frac{d}{d\SAtime}\haODEstate_{\SAtime} = f(\haODEstate_{\SAtime},\qsaprobe(g^{-1}(\SAtime)).
\end{equation}
The two processes $\bfODEstate$ and $\bfhaODEstate$ differ only in time scale, and hence, proving convergence of one  proves that of the other. For the remainder of this section we will deal exclusively with $\bfhaODEstate$;  it is on the `right' time scale for comparison with  $\bfodestate$, the solution of \eqref{e:ODE_SA}.

The first step in establishing convergence of QSA is to show that the solutions are bounded in time. Two approaches can be borrowed from the literature:  Lyapunov function techniques,   or the ODE at $\infty$ introduced in \cite{bormey00a,bor20a}.

\sfb{previous sections
\Cref{s:BM_theorem}.
}

When applying the techniques of   \cite{bormey00a,bor20a} we require the vector field at $\infty$:
\sfb{\Cref{s:BM_theorem} }
\begin{equation}
\barf_\infty (\theta) 
\eqdef
\lim_{r \to \infty }  r^{-1} \barf ( r \theta)  \,,\qquad \theta\in\Re^d 
\label{e:barfinfty_QSA}
\end{equation} 
See \cite{imaginaryPaper20} for a proof of the following:   \notes{Mind if I cite my imaginary book?}

\begin{theorem}
\label{t:convergenceBM_QSA} 
Suppose that Assumptions (A1)--(A3) hold, along with the following two conditions:
\begin{romannum}
\item     The limit \eqref{e:barfinfty_QSA}
exists for all $\theta$ to define a continuous function $\barf_\infty \colon\Re^d\to\Re^d$
\item
The origin is  globally asymptotically stable for the  ODE at $\infty$:
\begin{equation}
\ddt \odestate_t^\infty = \barf_\infty ( \odestate_t^\infty) \,,\qquad \theta\in\Re^d 
\label{e:BMscaling}
\end{equation}
\end{romannum} 
Then  the solution to \eqref{e:QSAgen} converges to $\theta^\ocp$ for each initial condition. 
\end{theorem}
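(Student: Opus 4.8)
The plan is to transfer the Borkar--Meyn ``ODE at $\infty$'' argument to the quasi-stochastic setting; the overall structure is unchanged, but the vanishing of the ``noise'' will be obtained from the ergodic bound \eqref{e:pre-ProbeErgodic} together with the monotonicity of $\bfma$ in (A1), in place of the usual martingale argument. I work with the time-scaled process $\bfhaODEstate$ of \eqref{e:varscaled}, which solves \eqref{e:QSAgenscaled}; since $\bfODEstate$ and $\bfhaODEstate$ differ only by the strictly increasing bijection $\SAtime=g_t=\int_0^t a_r\,dr$ of $\posRe$ (strict monotonicity and surjectivity follow from (A1), because a monotone $\bfma$ with $\int_0^\infty a_r\,dr=\infty$ is strictly positive), it suffices to show $\haODEstate_\SAtime\to\theta^\ocp$. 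This splits into (i) boundedness of $\{\haODEstate_\SAtime:\SAtime\ge0\}$ and (ii) convergence given boundedness. Part (ii) is essentially the convergence theory already reviewed in \Cref{s:QSAconvergence}: once the trajectory is confined to a fixed compact set, \eqref{e:pre-ProbeErgodic} (restricted to that set) makes $\bfhaODEstate$ an asymptotic pseudo-trajectory of the flow of \eqref{e:ODE_SA}, and (A3) forces convergence to the unique equilibrium $\theta^\ocp$. So the genuinely new work is entirely in (i).

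For boundedness, fix a window length $T>0$ (to be specified below), set $\SAtime_n=nT$, $t_n=g^{-1}(\SAtime_n)$, $M_n=\max(1,\|\haODEstate_{\SAtime_n}\|)$, and consider the rescaled arcs $\varphi^{(n)}_\SAtime:=\haODEstate_{\SAtime_n+\SAtime}/M_n$ on $[0,T]$, which satisfy $\|\varphi^{(n)}_0\|\le1$ and
\[
\varphi^{(n)}_\SAtime=\varphi^{(n)}_0+\int_0^\SAtime\tfrac1{M_n}f\bigl(M_n\varphi^{(n)}_s,\,\qsaprobe(g^{-1}(\SAtime_n+s))\bigr)\,ds.
\]
The Lipschitz/linear-growth bounds of (A2) and a routine Gronwall estimate make these arcs uniformly bounded and equi-Lipschitz on $[0,T]$, hence relatively compact in $C([0,T];\Re^d)$. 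The heart of the argument is the claim that whenever $M_{n_k}\to\infty$, some subsequence of $\varphi^{(n_k)}$ converges uniformly to a solution $\varphi^\star$ of the ODE at $\infty$ \eqref{e:BMscaling} with $\|\varphi^\star_0\|\le1$. Two ingredients feed this. First, hypothesis (i) gives $\tfrac1M\barf(M\theta)\to\barf_\infty(\theta)$, and since each $\theta\mapsto\tfrac1M\barf(M\theta)$ and also $\barf_\infty$ are $\Lip_f$-Lipschitz, this convergence is locally uniform. Second — the delicate point — the noise term $\int_0^\SAtime\tfrac1{M_n}\bigl[f(M_n\varphi^{(n)}_s,\qsaprobe(g^{-1}(\SAtime_n+s)))-\barf(M_n\varphi^{(n)}_s)\bigr]\,ds$ vanishes: subdivide $[0,T]$ into finitely many pieces on which $\varphi^{(n)}$ varies little, freeze $\theta$ on each piece, revert to original time, and integrate by parts; writing $F(r)=\int_{t_n}^r[f(\theta,\qsaprobe_u)-\barf(\theta)]\,du$, \eqref{e:pre-ProbeErgodic} gives $\|F(r)\|\le 2b_0(1+\|\theta\|)$ uniformly in $r$, and monotonicity of $\bfma$ then yields a per-piece bound of order $a_{t_n}(1+\|\theta\|)$. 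Since $\|\theta\|=O(M_n)$ while $a_{t_n}\to0$ (because $g^{-1}(\SAtime_n)\to\infty$), dividing by $M_n$ and summing over the fixed number of pieces gives $o(1)$; passing to the limit in the displayed integral equation identifies $\varphi^\star$ with a solution of \eqref{e:BMscaling}.

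It remains to exploit hypothesis (ii). Global asymptotic stability of the origin for \eqref{e:BMscaling}, together with compactness of the unit sphere, furnishes $T^\star$ such that every solution with $\|\varphi_0\|\le1$ satisfies $\|\varphi_T\|\le1/4$ for all $T\ge T^\star$; fix $T>T^\star$ from the start. The scaling claim then produces a threshold $B^\star<\infty$ with the property that $\|\haODEstate_{\SAtime_n}\|\ge B^\star$ implies $\|\haODEstate_{\SAtime_{n+1}}\|\le\tfrac12\|\haODEstate_{\SAtime_n}\|$: otherwise a sequence of violations would yield, via the claim, a limiting ODE-at-$\infty$ trajectory with $\|\varphi^\star_0\|=1$ but $\|\varphi^\star_T\|\ge\tfrac12$, contradicting the choice of $T$. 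Combining this ``contraction above $B^\star$'' with the one-window Gronwall growth bound $\|\haODEstate_{\SAtime_{n+1}}\|\le e(T)\,\|\haODEstate_{\SAtime_n}\|+C(T)$ gives $\sup_n\|\haODEstate_{\SAtime_n}\|<\infty$, and a further one-window Gronwall estimate upgrades this to $\sup_{\SAtime\ge0}\|\haODEstate_\SAtime\|<\infty$, which is (i); with (i) in hand, (ii) completes the proof. The step I expect to be the main obstacle is the scaling claim, and within it the uniformity of the noise bound in the blow-up parameter $M_n$: this works precisely because a window of fixed $\SAtime$-length corresponds to an exponentially long window of original time, over which \eqref{e:pre-ProbeErgodic} is strong enough to kill the noise even when it is evaluated at arguments of size $M_n$.
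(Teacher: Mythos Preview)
Your proof is correct in its essentials, but the route differs from the paper's.

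The paper does not carry out the rescaling/subsequence argument directly on the QSA trajectory. Instead it reduces \Cref{t:convergenceBM_QSA} to \Cref{t:convergenceV_QSA} by constructing, from the ODE@$\infty$ hypotheses, an explicit Lipschitz Lyapunov function satisfying (QSV1) (this is \Cref{t:LyapBM_QSA}). The construction uses the radially homogeneous function $V^\infty(\theta)=\int_0^T\|\odestate_t^\infty\|\,dt$ for \eqref{e:BMscaling}, shows it has the drift $V^\infty(\odestate_T^\infty)\le V^\infty(\theta)-\half\delta\|\theta\|$, and then transfers this to the flow of \eqref{e:ODE_SA} via a uniform comparison of trajectories (\Cref{t:BMscalingSolidarity}). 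Once (QSV1) is in hand, both boundedness and convergence come from the already-established Lyapunov theory, with the noise handled inside that machinery. Your approach is the classical Borkar--Meyn contraction argument transplanted directly to the QSA ODE: rescaled arcs, Arzel\`a--Ascoli, identification of limits as solutions of \eqref{e:BMscaling}, and a halving-above-threshold conclusion. Your treatment of the noise via integration by parts against the monotone gain, using the uniform bound from \eqref{e:pre-ProbeErgodic}, is the right substitute for the martingale step and gives the needed $O(a_{t_n})$ per frozen piece. The paper's route buys an actual Lyapunov function (useful elsewhere) and avoids repeating the noise analysis, since that is already absorbed in \Cref{t:convergenceV_QSA}; your route is more self-contained and closer to the original \cite{bormey00a}.

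Two small points of exposition. First, ``fixed number of pieces'' is slightly misleading: the number of subintervals must depend on the target accuracy $\epsy$ (to control the freezing error $O(\delta)$ after dividing by $M_n$), though not on $n$; the argument is the usual two-parameter limit. Second, the remark that a fixed $\SAtime$-window corresponds to an ``exponentially long'' window of original time is specific to $\rho=1$; for $\rho<1$ the growth is polynomial. What actually drives the noise to zero is only $a_{t_n}\to 0$, which you use correctly.
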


When applying Lyapunov function techniques we impose the following:
\begin{romannum}

\item[\textbf{(QSV1)}] There exists a continuous function $V:\Re^d\rightarrow\Re_+$ and   constants  $c_0>0$,  $\delta_0>0$ such that, for any initial condition $\odestate_0$ of
\eqref{e:ODE_SA}, and
any $0\le T\le1$,
the following bounds hold whenever
$\|\odestate_s\|>c_0$,
\[
V(\odestate_{s+T}) - V(\odestate_s) \le -  \delta_0 \int_0^T  \|\odestate_t\| \, dt
\]
The Lyapunov function is Lipschitz continuous:  
there exists a constant $\Lip_V  <\infty$ such that  $
\|V(\theta') - V(\theta)\| \le \Lip_V  \|\theta' - \theta\|
$ for all $\theta$,  $\theta'$.

\end{romannum}
Assumption (QSV1) ensures that there is a Lyapunov function $V$ with a strictly negative drift whenever $\bfodestate$ escapes a ball of radius $c_0$. This   is used to establish boundedness of   $\bfODEstate$. 
If $V$ is differentiable then we have
\[
\ddt V(\odestate_t)   \le -    \|\odestate_t\| \, ,  \quad \textit{whenever $\|\odestate_t\|>c_0$}
\]
The integral form is chosen since sometimes it is easier to establish a bound in this form.   In particular, the proof of \Cref{t:convergenceBM_QSA} 
below is based on the construction of a solution to (QSV1).

\paragraph{Verifying (QSV1) for a linear system.}  
Consider the ODE \eqref{e:ODE_SA} in which $\barf(x) = Ax$ with $A$ a Hurwitz $d\times d$ matrix.   There is a quadratic function $V_2(x) =   x^\transpose P x$ with $P\in \Re^{d\times d}$ satisfying the Lyapunov equation $PA +A^\transpose P = -I$, with $P>0$.  
 Consequently, solutions to \eqref{e:ODE_SA}  satisfy
 \[
 \ddt V_2(\odestate_t)  = -\| \odestate_t\|^2
 \]
Choose $V= \sqrt{V_2}$, so that by the chain rule
 \[
 \ddt V(\odestate_t)  =     
 - \frac{1}{2}  \frac{1}{\sqrt{V_2(\odestate_t) } } \| \odestate_t\|^2   
 \] 
This $V$ is a Lipschitz solution to (QSV1), for any $c_0>0$.
\qed


\begin{theorem}
\label{t:convergenceV_QSA} 
Under Assumptions (A1)--(A3) and (QSV1),  the solution to \eqref{e:QSAgen} converges to $\theta^\ocp$ for each initial condition. 
\end{theorem}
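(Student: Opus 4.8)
The plan is the classical ODE method adapted to the deterministic ``noise'' of QSA. After the time change \eqref{e:SAtime}, the rescaled process $\bfhaODEstate$ of \eqref{e:varscaled} solves \eqref{e:QSAgenscaled}, i.e.\ $\tfrac{d}{d\SAtime}\haODEstate_\SAtime = f(\haODEstate_\SAtime,\check\qsaprobe_\SAtime)$ with $\check\qsaprobe_\SAtime = \qsaprobe(g^{-1}(\SAtime))$; since $g_t=\int_0^t a_r\,dr\to\infty$ by (A1), it suffices to prove $\haODEstate_\SAtime\to\theta^\ocp$. I would carry this out in three steps: (1) an averaging estimate comparing $\bfhaODEstate$ with the flow of \eqref{e:ODE_SA} on unit $\SAtime$-intervals; (2) boundedness of $\{\haODEstate_\SAtime\}$ via (QSV1); (3) convergence via global asymptotic stability (A3).

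\textbf{Step 1 (averaging).} For $\SAtime_0\ge0$ let $\bfodestate^{\SAtime_0}$ solve \eqref{e:ODE_SA} with $\odestate^{\SAtime_0}_0=\haODEstate_{\SAtime_0}$. Subtracting the two integral equations and using the Lipschitz bound (A2) for the ``drift'' terms, Grönwall's inequality reduces everything to the frozen-$\theta$ noise integral $\int_{\SAtime_0}^{\SAtime_0+s}[f(\theta,\check\qsaprobe_u)-\barf(\theta)]\,du$. Changing variables $u=g_t$ rewrites this as $\int_{t_0}^{t_1}[f(\theta,\qsaprobe_t)-\barf(\theta)]\,a_t\,dt$ with $t_0=g^{-1}(\SAtime_0)$; because $\bfma$ is monotone by (A1), the second mean value theorem for integrals together with the ergodic bound \eqref{e:pre-ProbeErgodic} bounds this by $2b_0\,a_{t_0}(1+\|\theta\|)$. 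Hence, with $\bar\epsy(\SAtime_0):=2b_0\,a_{g^{-1}(\SAtime_0)}$,
\[
\sup_{0\le s\le1}\|\haODEstate_{\SAtime_0+s}-\odestate^{\SAtime_0}_s\| \le C\,\bar\epsy(\SAtime_0)\,(1+\|\haODEstate_{\SAtime_0}\|),\qquad \bar\epsy(\SAtime_0)\to0 \ \text{ as } \SAtime_0\to\infty .
\]
Iterating over $k$ consecutive unit intervals yields an analogous bound on $[0,k]$ whose constant still tends to $0$ in $\SAtime_0$, once $\{\haODEstate_\SAtime\}$ is known to be bounded.

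\textbf{Step 2 (boundedness).} A Grönwall lower bound shows that if $\|\haODEstate_{\SAtime_0}\|$ exceeds a threshold $c_1>c_0$ then $\|\odestate^{\SAtime_0}_s\|>c_0$ throughout $[0,1]$, so (QSV1) gives $V(\odestate^{\SAtime_0}_1)-V(\haODEstate_{\SAtime_0})\le-\delta_0\int_0^1\|\odestate^{\SAtime_0}_s\|\,ds\le-\delta_1\|\haODEstate_{\SAtime_0}\|$ for some $\delta_1>0$. Combining with the Lipschitz property of $V$ and Step 1,
\[
V(\haODEstate_{\SAtime_0+1})\le V(\haODEstate_{\SAtime_0})-\delta_1\|\haODEstate_{\SAtime_0}\|+C\Lip_V\,\bar\epsy(\SAtime_0)\,(1+\|\haODEstate_{\SAtime_0}\|),
\]
so for $\SAtime_0$ large the right side is $\le V(\haODEstate_{\SAtime_0})-\tfrac{\delta_1}{2}\|\haODEstate_{\SAtime_0}\|+C\Lip_V\bar\epsy(\SAtime_0)$, strictly less than $V(\haODEstate_{\SAtime_0})$ once $\|\haODEstate_{\SAtime_0}\|>c_2$. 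Evaluated along integer times and using $V\ge0$, this forces the trajectory back into $\{\|\cdot\|\le c_2\}$ and keeps $V(\haODEstate_n)$ bounded; since the same drift estimate forces $V(\theta)\ge\mathrm{const}\cdot\|\theta\|$ for $\|\theta\|$ large, $\{\haODEstate_n\}$ is bounded, and a routine displacement bound over each unit interval (again via \eqref{e:pre-ProbeErgodic} and (A2)) gives $R:=\sup_\SAtime\|\haODEstate_\SAtime\|<\infty$.

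\textbf{Step 3 (convergence), and the main obstacle.} With $\{\haODEstate_\SAtime\}\subset B_R$, Step 1 shows $\bfhaODEstate$ is an asymptotic pseudotrajectory of the flow of \eqref{e:ODE_SA}. Global asymptotic stability of $\theta^\ocp$ from (A3) then gives convergence by the standard argument: fix $\epsy>0$, pick a stability radius $\delta\le\epsy$ for $\theta^\ocp$ and an attraction time $T$ with $\odestate_0\in B_R\Rightarrow\odestate_T\in B_{\delta/2}(\theta^\ocp)$; choosing $\SAtime_0$ so the Step-1 error on $[0,T+1]$ is $<\delta/2$ puts $\haODEstate_{\SAtime_0+T}$ in $B_\delta(\theta^\ocp)$, and chaining the shadowing over subsequent unit intervals keeps $\haODEstate_\SAtime\in B_\epsy(\theta^\ocp)$ for all $\SAtime\ge\SAtime_0+T$; letting $\epsy\downarrow0$ (the pseudotrajectory-to-global-attractor step, cf.\ \cite{bor20a}) gives $\haODEstate_\SAtime\to\theta^\ocp$, hence $\ODEstate_t\to\theta^\ocp$. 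The main obstacle is Step 1 — specifically the conversion of the original-time ergodic bound \eqref{e:pre-ProbeErgodic} into a bound on the time-changed noise integral, which is exactly where monotonicity of $\bfma$ is indispensable, plus the bookkeeping of the variation of $\haODEstate$ inside the Grönwall estimate; a secondary point is that (QSV1) controls $V$ rather than $\|\cdot\|$, so one must extract the linear lower bound on $V$ from the drift condition to close the boundedness argument.
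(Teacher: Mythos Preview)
Your proposal is correct and follows essentially the same route as the paper. The paper splits the argument into (i) ultimate boundedness of $\bfhaODEstate$, which it defers to \cite{berchecoldalmehmey19a}, and (ii) \Cref{t:bddConv}, which uses \Cref{thm:limsuplimsup} (your Step~1 under a boundedness hypothesis) together with (A3) exactly as in your Step~3. Your Step~2 is the part the paper cites rather than proves; your quantitative version of the averaging estimate, carrying the $(1+\|\haODEstate_{\SAtime_0}\|)$ factor, is precisely what is needed to run that argument, and your extraction of the coercivity $V(\theta)\gtrsim\|\theta\|$ from (QSV1) is the right trick.
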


\subsection{ODE Solidarity}

Here we establish \Cref{t:convergenceBM_QSA,t:convergenceV_QSA} under the \textit{assumption} that solutions to \eqref{e:QSAgen} are ultimately bounded in the following sense:
there exists $b < \infty$ such that for each $\theta\in\Re^d$,  $z\in\prstate$,  there is a $T_\theta$ such that
\begin{equation}
	\|\haODEstate_{\SAtime}\|\le b \textrm{\ \ for all } \SAtime\ge T_\theta\,,  \quad \textit{when} \ 
    \  \haODEstate_0 = \theta \, , \ \qsaprobe_0 = z
\label{e:ODEub}
\end{equation}
 Verification of this stability condition is contained in  \cite{berchecoldalmehmey19a}, under the assumptions of \Cref{t:convergenceV_QSA}.
A complete proof under the assumptions of 
\Cref{t:convergenceBM_QSA} 
is contained in \Cref{s:bm}.

Define $\odestate^\SAtime(w)$, $w\ge \SAtime $, to be the unique solution to \eqref{e:ODE_SA} `starting' at $\haODEstate_{\SAtime}$:
\begin{equation}
\label{eq:init}
\ddw\odestate^\SAtime(w) = \barf(\odestate^\SAtime(w)),\, \, w\ge \SAtime ,  \, \, \odestate^\SAtime_{\SAtime} = \haODEstate_{\SAtime}.
\end{equation}
The following result is required in the proof of either stability result.  The proof of \Cref{thm:limsuplimsup} can be found in  \cite{berchecoldalmehmey19a}, and is similar to results in the SA literature (e.g.,  Lemma~1 in Chapter~2 of \cite{bor08a}).  \notes{update with bor20a}

\begin{lemma}
\label[lemma]{thm:limsuplimsup}
Assume that $\bfhaODEstate$ is bounded. 
Then, for any $T>0$,  
\begin{align*}
\lim_{\SAtime \to\infty} &
  \sup_{v\in[0,T]}\Bigl\|
		\int_\SAtime^{\SAtime +v} \bigl[ f(\haODEstate_w,\qsaprobe(g^{-1}(w)) - \barf(\haODEstate_w ) \bigr]\, dw
							\Bigr \| = 0
\\
\lim_{\SAtime \to\infty} &
 \sup_{v\in[0,T]}\|\haODEstate_{\SAtime+v} - \odestate^\SAtime(\SAtime+v)\|  = 0.      \
\end{align*}  
\end{lemma}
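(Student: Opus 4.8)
The plan is to derive both assertions from the ergodic estimate in Assumption~(A2), the Lipschitz bound of (A2), and the boundedness hypothesis on $\bfhaODEstate$, by passing the fixed-$\theta$ ergodic average through a Gronwall-type discretization argument in the rescaled time $\SAtime$. First I would fix $T>0$ and, for a small mesh size $h>0$, partition the interval $[\SAtime,\SAtime+T]$ into $\lceil T/h\rceil$ subintervals of length at most $h$. On each subinterval $[s,s+h]$ I would freeze the slow variable: replace $\haODEstate_w$ by $\haODEstate_s$ inside $f(\haODEstate_w,\cdot)-\barf(\haODEstate_w)$. The replacement error is controlled by the Lipschitz constant $\Lip_f$ of (A2) together with the modulus of continuity of $\bfhaODEstate$ on compact time intervals; since $\bfhaODEstate$ is bounded (hypothesis) and solves \eqref{e:QSAgenscaled} with $a_t\to 0$ (equivalently, in $\SAtime$-time the right side is $f(\haODEstate_\SAtime,\qsaprobe(g^{-1}(\SAtime)))$, which is bounded on the bounded trajectory), the increment $\|\haODEstate_w-\haODEstate_s\|$ is $O(h)$ uniformly in $\SAtime$.

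For the frozen term I would rewrite, for each subinterval,
\[
\int_s^{s+h}\bigl[f(\haODEstate_s,\qsaprobe(g^{-1}(w)))-\barf(\haODEstate_s)\bigr]\,dw
\]
as an integral over the \emph{original} time variable via the change of variables $\SAtime'=g_t$, $d\SAtime' = a_t\,dt$. This turns it into $\int_{t_1}^{t_2}a_t\bigl[f(\haODEstate_s,\qsaprobe_t)-\barf(\haODEstate_s)\bigr]\,dt$ over a corresponding original-time window $[t_1,t_2]$. Since $a_t$ is monotone decreasing (A1), Abel summation / integration by parts lets me bound this by $a_{t_1}$ times $\sup_{T'}\|\int_0^{T'}[f(\haODEstate_s,\qsaprobe_t)-\barf(\haODEstate_s)]\,dt\|$, and the latter supremum is $O(1+\|\haODEstate_s\|)$ by the ergodic bound \eqref{e:pre-ProbeErgodic}. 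As $\SAtime\to\infty$ we have $t_1\to\infty$ hence $a_{t_1}\to 0$, while $\|\haODEstate_s\|\le b$ by boundedness; so each subinterval contributes $o(1)$, and summing the at-most-$\lceil T/h\rceil$ contributions still gives $o(1)$ as $\SAtime\to\infty$ for fixed $h$. Combining with the $O(h)$ freezing error and then letting $h\downarrow 0$ after $\SAtime\to\infty$ yields the first display.

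The second display follows from the first by Gronwall. Writing $e_v = \haODEstate_{\SAtime+v}-\odestate^\SAtime(\SAtime+v)$, subtract the integral forms of \eqref{e:QSAgenscaled} and \eqref{eq:init}: for $v\in[0,T]$,
\[
e_v = \int_0^{v}\bigl[\barf(\haODEstate_{\SAtime+u})-\barf(\odestate^\SAtime(\SAtime+u))\bigr]\,du
 + \int_0^{v}\bigl[f(\haODEstate_{\SAtime+u},\qsaprobe(g^{-1}(\SAtime+u)))-\barf(\haODEstate_{\SAtime+u})\bigr]\,du.
\]
The second integral is bounded in absolute value by the quantity in the first display, call it $\varepsilon(\SAtime,T)\to 0$; the first integral is bounded by $\Lip_f\int_0^v\|e_u\|\,du$ using the Lipschitz bound on $\barf$. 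Gronwall's inequality then gives $\sup_{v\in[0,T]}\|e_v\|\le \varepsilon(\SAtime,T)e^{\Lip_f T}\to 0$ as $\SAtime\to\infty$.

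The main obstacle is the first step: making the "freeze the slow variable and exploit monotonicity of $a_t$ plus the $O(1/T)$ ergodic bound" argument quantitative and uniform, i.e. showing that the accumulated error over the partition genuinely vanishes as $\SAtime\to\infty$ before one refines $h$. The monotonicity assumption in (A1) is exactly what makes the Abel-summation bound by $a_{t_1}(1+\|\haODEstate_s\|)$ work; without it one would need square-summability as in classical SA. Everything after that (the Gronwall step) is routine.
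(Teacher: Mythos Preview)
Your proposal is correct and follows precisely the standard route the paper has in mind: the paper does not give a self-contained proof but defers to \cite{berchecoldalmehmey19a} and to Lemma~1, Chapter~2 of Borkar's monograph, where exactly this ``partition $[\SAtime,\SAtime+T]$, freeze $\haODEstate$ on each cell, change variables back to original time, bound the frozen piece via integration by parts against the monotone $a_t$ together with the $O(1/T)$ ergodic bound \eqref{e:pre-ProbeErgodic}, then Gr\"onwall for the second assertion'' argument is carried out. The one point worth making explicit is that your $O(h)$ freezing estimate uses $\sup_\SAtime\|f(\haODEstate_\SAtime,\qsaprobe(g^{-1}(\SAtime)))\|<\infty$, which follows from the Lipschitz bound in (A2) once one knows $\sup_t\|f(0,\qsaprobe_t)\|<\infty$; this is implicit for the bounded probing signals used throughout the paper but is not literally contained in (A1)--(A3).
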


\begin{proposition}[Boundedness Implies Convergence]
\label[proposition]{t:bddConv}
Suppose that (A1)--(A3) hold,  and that the ultimate boundedness assumption \eqref{e:ODEub} holds.   Then,
 the solution to \eqref{e:QSAgen} converges to $\theta^\ocp$ for each initial condition.

\end{proposition}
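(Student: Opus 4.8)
The plan is to run the classical ODE-method argument on the time-scaled process $\bfhaODEstate$ defined in \eqref{e:varscaled}, which solves \eqref{e:QSAgenscaled}. Since $\bfODEstate$ and $\bfhaODEstate$ differ only through the strictly increasing time change $\SAtime = g_t$, and $g_t\to\infty$ by divergence of $\int_0^\infty a_r\,dr$ in (A1), it suffices to show $\haODEstate_\SAtime \to \theta^\ocp$ as $\SAtime\to\infty$ for each initial condition $(\theta,z)$. Note also that \eqref{e:ODEub} forces $\bfhaODEstate$ to be bounded on $[0,\infty)$ (it is continuous, hence bounded, on $[0,T_\theta]$, and bounded by $b$ thereafter), so \Cref{thm:limsuplimsup} is applicable.

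First I would record a uniform-attraction consequence of (A3): for the compact ball $B_b = \{\theta\in\Re^d : \|\theta\|\le b\}$ from \eqref{e:ODEub}, and any $\varepsilon>0$, there is a finite horizon $T = T(\varepsilon)<\infty$ such that every solution $\bfodestate$ of \eqref{e:ODE_SA} with $\odestate_0\in B_b$ satisfies $\|\odestate_t - \theta^\ocp\| < \varepsilon$ for all $t\ge T$. This is standard but is the one place needing care: Lyapunov stability (part of GAS) gives $\delta>0$ with $\|\odestate_0-\theta^\ocp\|<\delta \Rightarrow \sup_{t\ge 0}\|\odestate_t-\theta^\ocp\|<\varepsilon$; global attraction gives, for each $\odestate_0\in B_b$, a time $T_{\odestate_0}$ with $\|\odestate_{T_{\odestate_0}}-\theta^\ocp\|<\delta/2$; continuous dependence on initial conditions makes this inequality persist on a neighborhood of $\odestate_0$; compactness of $B_b$ then yields a finite subcover and hence a uniform $T=T(\varepsilon)$.

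Next I would combine this with \Cref{thm:limsuplimsup}. Fix the initial condition and the associated $T_\theta$ from \eqref{e:ODEub}, so $\haODEstate_\SAtime\in B_b$ for all $\SAtime\ge T_\theta$. Given $\varepsilon>0$, take $T = T(\varepsilon)$ as above. By the second limit in \Cref{thm:limsuplimsup} applied with horizon $T$, there is $\SAtime_1$ with $\|\haODEstate_{\SAtime+v} - \odestate^\SAtime(\SAtime+v)\| < \varepsilon$ for all $\SAtime\ge\SAtime_1$ and $v\in[0,T]$, where $\odestate^\SAtime$ is the solution of \eqref{e:ODE_SA} started from $\haODEstate_\SAtime$ as in \eqref{eq:init}. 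For $\SAtime\ge\max(T_\theta,\SAtime_1)$ we have $\haODEstate_\SAtime\in B_b$, hence $\|\odestate^\SAtime(\SAtime+T) - \theta^\ocp\|<\varepsilon$ by the uniform-attraction bound; the triangle inequality then gives $\|\haODEstate_{\SAtime+T}-\theta^\ocp\| < 2\varepsilon$ for all $\SAtime\ge\max(T_\theta,\SAtime_1)$, i.e. $\|\haODEstate_{w}-\theta^\ocp\| < 2\varepsilon$ for all $w$ large enough. Since $\varepsilon>0$ is arbitrary, $\haODEstate_\SAtime\to\theta^\ocp$, and undoing the time change yields $\ODEstate_t\to\theta^\ocp$.

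The main obstacle is bookkeeping rather than conceptual: the only genuinely delicate point is the uniform-attraction claim, i.e. upgrading the pointwise global asymptotic stability asserted in (A3) to attraction that is uniform over the compact ball $B_b$ — the step in which Lyapunov stability, continuous dependence on initial conditions, and compactness all enter. Everything else is an immediate application of the already-established \Cref{thm:limsuplimsup} together with the hypothesis \eqref{e:ODEub}.
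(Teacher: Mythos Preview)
Your proposal is correct and follows essentially the same route as the paper: work with the time-scaled process $\bfhaODEstate$, invoke uniform attraction of $\theta^\ocp$ over the compact ball $B_b$ from (A3), and combine with the second limit of \Cref{thm:limsuplimsup} via the triangle inequality. The paper's proof is terser---it simply asserts the uniform horizon $\Hor_\epsy$ as a consequence of global asymptotic stability---whereas you supply the standard Lyapunov-stability/continuous-dependence/compactness argument that justifies it; this extra care is appropriate and does not change the structure of the proof.
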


\begin{proof}
 Under the assumptions of the proposition, there exists $b<\infty$ such that for any $\theta\in\Re^d$, there is a time $T_\theta$ such that
$\|\odestate^\SAtime_{\SAtime}\| = \|\haODEstate_{\SAtime}\|\le b$,   
for  $ \SAtime \ge T_\theta$.

 By the definition of global asymptotic stability, for every $\epsy>0$, there exists a $\Hor_\epsy>0$, independent of the value $\odestate^\SAtime_{\SAtime}$, such that 
\[
	\|\odestate^\SAtime(\SAtime+v) - \theta^\ocp\| < \epsy  \quad \textrm{ for all }v\ge \Hor_\epsy \,,  \quad \textit{whenever $\|\odestate^\SAtime_{\SAtime}\|   \le b$}
\]
  \Cref{thm:limsuplimsup} gives,
\begin{align*}
	\limsup_{ \SAtime \rightarrow\infty}\|\haODEstate_{\SAtime+\Hor_\epsy }   - \theta^\ocp\| 
	&\le \limsup_{\SAtime \rightarrow\infty}\|\haODEstate_{\SAtime+\Hor_\epsy }-\odestate^\SAtime_{\SAtime+\Hor_\epsy }\| 
+ \limsup_{\SAtime \rightarrow\infty}\|\odestate^\SAtime_{\SAtime+\Hor_\epsy } - \theta^\ocp\| 
 \le \epsy. 
\end{align*}
Since $\epsy$ is arbitrary, we have the desired limit.
\end{proof}
 
\subsection{ODE@${\infty}$}
\label{s:bm}
 
The remainder of this section contains technical results concerning  the proof of \Cref{t:convergenceBM_QSA}.
It may not be surprising that the following result makes several appearances in the proofs. 
\begin{proposition}[Grönwall Inequality]   
\label[proposition]{t:B-GI}
Let $\bfalpha$, $\bfbeta$ and $\bfmz$ be real-valued functions defined on an interval $[0,\Hor]$, with $\Hor>0$. 
 Assume that $\bfbeta$ and $\bfmz$ are continuous.  
\begin{romannum}
\item If $\bfbeta$ is non-negative and if $\bfmz$ satisfies the integral inequality
\begin{equation}
  z_t\leq \alpha _t+\int_0^t\beta_s z_s\, ds  
\label{e:GIi}
\end{equation}
Then Grönwall Inequality holds:
\begin{equation}
z_t\leq \alpha _t+\int_0^t\alpha_s\beta_s\exp \Bigl(\int_s^t\beta_r\,\, dr\Bigr)\, ds,\qquad 0\le t\le \Hor.
\label{e:GI}
\end{equation}
 
\item If, in addition, the function $\bfalpha$ is non-decreasing, then
\begin{equation}
z_t\leq \alpha _t\exp \Bigl(\int_0^t\beta_s\,\, ds\Bigr),\qquad 0\le t\le \Hor.
\label{e:GIii}
\end{equation}
\qed
\end{romannum}
\end{proposition}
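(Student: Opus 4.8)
The plan is to reduce the integral inequality \eqref{e:GIi} to a first-order linear differential inequality by introducing the auxiliary function
$w_t = \int_0^t \beta_s z_s\, ds$. Since $\bfbeta$ and $\bfmz$ are continuous, $w$ is continuously differentiable on $[0,\Hor]$, with $w_0 = 0$ and $w'_t = \beta_t z_t$. Because $\bfbeta$ is non-negative, \eqref{e:GIi} gives the pointwise bound $w'_t = \beta_t z_t \le \beta_t(\alpha_t + w_t)$, i.e. $w'_t - \beta_t w_t \le \alpha_t\beta_t$.

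Next I would multiply through by the strictly positive integrating factor $\mu_t = \exp(-\int_0^t \beta_s\, ds)$. Since $\mu'_t = -\beta_t\mu_t$, the left-hand side becomes an exact derivative: $\frac{d}{dt}[\mu_t w_t] = \mu_t w'_t - \beta_t\mu_t w_t \le \alpha_t\beta_t\mu_t$. Integrating from $0$ to $t$ and using $w_0 = 0$, $\mu_0 = 1$ yields $\mu_t w_t \le \int_0^t \alpha_s\beta_s\mu_s\, ds$, and dividing by $\mu_t$ gives $w_t \le \int_0^t \alpha_s\beta_s \exp\bigl(\int_s^t \beta_r\, dr\bigr)\, ds$. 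Substituting this into $z_t \le \alpha_t + w_t$ (which is \eqref{e:GIi}) produces \eqref{e:GI}, proving part (i).

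For part (ii) I would start from \eqref{e:GI} and invoke monotonicity of $\bfalpha$ to extract $\alpha_t$ from the integral: $\int_0^t \alpha_s\beta_s\exp\bigl(\int_s^t\beta_r\,dr\bigr)\,ds \le \alpha_t\int_0^t \beta_s\exp\bigl(\int_s^t\beta_r\,dr\bigr)\,ds$. The remaining integral is evaluated exactly by recognizing the integrand as $-\frac{d}{ds}\exp\bigl(\int_s^t\beta_r\,dr\bigr)$, so it equals $\exp\bigl(\int_0^t\beta_r\,dr\bigr) - 1$. Combining, $z_t \le \alpha_t + \alpha_t\bigl[\exp\bigl(\int_0^t\beta_r\,dr\bigr) - 1\bigr] = \alpha_t\exp\bigl(\int_0^t\beta_r\,dr\bigr)$, which is \eqref{e:GIii}.

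The argument is entirely routine and I do not anticipate a genuine obstacle; the only point requiring a little care is that $\bfalpha$ is assumed merely continuous (not differentiable) in part (i), which is precisely why the integrating-factor manipulation is carried out on the smooth auxiliary function $w$ rather than on $z$ directly. The non-decreasing hypothesis in part (ii) enters only at the single step of pulling $\alpha_t$ out of the integral.
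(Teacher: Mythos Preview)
The paper states this proposition without proof: the \qed\ at the end of the statement marks it as a classical result being quoted for later use, and the text immediately moves on to applying it. Your argument is the standard integrating-factor proof of Gr\"onwall's inequality and is correct; there is nothing to compare it against in the paper. One minor remark: the proposition as stated assumes only that $\bfbeta$ and $\bfmz$ are continuous, not $\bfalpha$, so your closing comment about $\bfalpha$ being ``merely continuous'' slightly overstates the hypotheses---but your proof never actually uses continuity of $\bfalpha$, only integrability of $\alpha_s\beta_s$, so this does not affect the argument.
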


Grönwall's Inequality implies  
 a crude bound that is needed in approximations:

\begin{proposition}
\label[proposition]{t:ODEbdd}
Consider the ODE \eqref{e:ODE_SA}, subject to the  Lipschitz  condition in (A2).      Then, 
\begin{romannum}
\item
There is a constant $B_f$ depending only on $\Dcs$ such that
\[
\| \odestate_t  \| 
\le
\bigl( B_f + 
 \| \odestate_0  \|  \bigr)  e^{{\Lip_f} t}   - B_f \,,\qquad   t\ge 0
\]
\item 
If there is an equilibrium $ \theta^\ocp$, then for each initial condition,
\[
\| \odestate_t  -  \theta^\ocp \| 
\le
\| \odestate_0 -  \theta^\ocp \| e^{{\Lip_f} t}  \,,\qquad   t\ge 0
\]
\qed
\end{romannum} 
\end{proposition}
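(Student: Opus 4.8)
The plan is to avoid differentiating the norm $t\mapsto\|\odestate_t\|$ (which is awkward at the origin, and at $\theta^\ocp$ for part~(ii)) and instead argue directly from the integral form of the ODE \eqref{e:ODE_SA}, feeding the resulting integral inequality into the Grönwall bound of \Cref{t:B-GI}(ii). The only input besides Grönwall is the linear growth bound built into Assumption~(A2).

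First I would extract that growth bound: from $\|\barf(\theta)-\barf(0)\|\le\Lip_f\|\theta\|$ we get $\|\barf(\theta)\|\le\|\barf(0)\|+\Lip_f\|\theta\|=\Lip_f(B_f+\|\theta\|)$ for all $\theta\in\Re^d$, where $B_f\eqdef\|\barf(0)\|/\Lip_f$ depends only on $\barf$. (If $\Lip_f=0$ then $\barf$ is constant and both claims are trivial, so one may assume $\Lip_f>0$.) For part~(i), write $\odestate_t=\odestate_0+\int_0^t\barf(\odestate_s)\,ds$, take norms, and use the triangle inequality with this growth bound to obtain
\[
\|\odestate_t\|\le\|\odestate_0\|+\int_0^t\Lip_f\bigl(B_f+\|\odestate_s\|\bigr)\,ds\,.
\]
The key manoeuvre is then to add $B_f$ to both sides and set $w_t\eqdef B_f+\|\odestate_t\|$, so that $w_t\le w_0+\int_0^t\Lip_f\,w_s\,ds$ with $w_0=B_f+\|\odestate_0\|$; since the constant function $\alpha_t\equiv w_0$ is non-decreasing, \Cref{t:B-GI}(ii) gives $w_t\le w_0e^{\Lip_f t}$, which is exactly the asserted bound $\|\odestate_t\|\le(B_f+\|\odestate_0\|)e^{\Lip_f t}-B_f$.

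For part~(ii) I would subtract the equilibrium and use $\barf(\theta^\ocp)=\Zero$ to write $\odestate_t-\theta^\ocp=(\odestate_0-\theta^\ocp)+\int_0^t[\barf(\odestate_s)-\barf(\theta^\ocp)]\,ds$; taking norms and applying the Lipschitz bound of~(A2) directly yields $\|\odestate_t-\theta^\ocp\|\le\|\odestate_0-\theta^\ocp\|+\int_0^t\Lip_f\|\odestate_s-\theta^\ocp\|\,ds$, and \Cref{t:B-GI}(ii) with the constant (hence non-decreasing) function $\alpha_t\equiv\|\odestate_0-\theta^\ocp\|$ closes the argument. There is no genuine obstacle in either part; the only subtlety worth flagging is the shift by $B_f$ before invoking Grönwall in~(i), which is what produces the sharp stated form rather than the looser $(\|\odestate_0\|+\Lip_f B_f t)e^{\Lip_f t}$ that a naive application would give.
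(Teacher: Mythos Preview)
Your proposal is correct and matches the paper's approach: the paper does not give a detailed proof of this proposition, but simply introduces it with ``Gr\"onwall's Inequality implies a crude bound that is needed in approximations,'' leaving the details implicit. Your argument---extracting the linear growth bound $\|\barf(\theta)\|\le \Lip_f(B_f+\|\theta\|)$ with $B_f=\|\barf(0)\|/\Lip_f$, shifting by $B_f$ to put the integral inequality in the form required by \Cref{t:B-GI}(ii), and for part~(ii) applying the Lipschitz bound directly to $\barf(\odestate_s)-\barf(\theta^\ocp)$---is exactly the standard fill-in the paper intends.
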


The main step in the proof of \Cref{t:convergenceBM_QSA} is to show that the assumptions of the theorem imply that the ODE \eqref{e:ODE_SA} is ultimately bounded.
We first need to better understand the special properties of the solution to \eqref{e:BMscaling}:

\begin{lemma}
\label[lemma]{t:BMscaling}  
Suppose that (A2) holds, and the limit \eqref{e:barfinfty_QSA}  exists for all $\theta$ to define a  continuous function $\barf_\infty \colon\Re^d\to\Re^d$.  Suppose moreover that
the origin is    asymptotically stable for \eqref{e:BMscaling}.     Then the following hold:
\begin{romannum}
\item  For each $\theta\in\Re^d$ and $s\ge 0$,
\[
\barf_\infty (s \theta) =s\barf_\infty (\theta)   
\]

\item   If $\{  \odestate_t^\infty  :t\ge 0  \} $ is any solution to the ODE \eqref{e:BMscaling},   and $s>0$,   then $\{ y_t = s\odestate_t^\infty : t\ge 0 \} $ is also a solution, starting from $y_0 = s\odestate_0^\infty \in\Re^d$.

\item   The origin is \emph{globally} asymptotically stable for \eqref{e:BMscaling}, and convergence to the origin is exponentially fast:
for some $R<\infty$ and $\rho>0$,
\[
\|  \odestate_t^\infty \|  \le  R  e^{-\rho t}  \|  \odestate_0^\infty \|  \,, \qquad  \odestate_0^\infty \in\Re^d
\]
\end{romannum}
\end{lemma}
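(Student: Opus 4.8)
My plan is to prove the three items in the order (i) $\to$ (ii) $\to$ (iii), since (i) is the algebraic backbone, (ii) is an immediate consequence by uniqueness of ODE solutions, and (iii) combines (ii) with a compactness argument on the unit sphere.

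For part (i), fix $\theta$ and $s>0$ (the case $s=0$ is trivial from continuity of $\barf_\infty$ and $\barf_\infty(0)=0$). By definition \eqref{e:barfinfty_QSA},
\[
\barf_\infty(s\theta) = \lim_{r\to\infty} r^{-1}\barf(r s\theta) = s\lim_{r\to\infty} (rs)^{-1}\barf((rs)\theta) = s\,\barf_\infty(\theta),
\]
where the middle step just rewrites $r^{-1} = s\cdot(rs)^{-1}$ and the last uses that $rs\to\infty$ as $r\to\infty$, so the limit along the subsequence $\{rs\}$ agrees with the full limit. This establishes positive homogeneity of degree one. Note that under (A2), $\barf$ is globally Lipschitz, hence $\barf_\infty$ is globally Lipschitz with the same constant $\Lip_f$, which will be used implicitly for existence/uniqueness of solutions to \eqref{e:BMscaling}.

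For part (ii), suppose $\{\odestate_t^\infty : t\ge 0\}$ solves \eqref{e:BMscaling} and set $y_t = s\odestate_t^\infty$ with $s>0$. Then $\ddt y_t = s\,\ddt\odestate_t^\infty = s\,\barf_\infty(\odestate_t^\infty)$, and by part (i) this equals $\barf_\infty(s\odestate_t^\infty) = \barf_\infty(y_t)$. Hence $\bfmy$ solves the same ODE with initial condition $y_0 = s\odestate_0^\infty$. (By uniqueness under the Lipschitz condition, this is in fact the unique solution from that initial condition.)

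For part (iii), the hypothesis gives only \emph{local} asymptotic stability of the origin; I must upgrade to global exponential stability using the scaling invariance. First, local asymptotic stability provides $\delta>0$ such that every solution with $\|\odestate_0^\infty\|\le\delta$ converges to the origin. By the scaling property (ii), since every initial condition $\theta\neq 0$ can be written as $\theta = s\theta_0$ with $\|\theta_0\| = \delta$ and $s = \|\theta\|/\delta$, and the corresponding solution is $s$ times a solution from the $\delta$-ball, convergence to the origin is in fact \emph{global}. To get the uniform exponential rate, work on the compact set $\{\|\theta\| = 1\}$: by local asymptotic stability together with continuous dependence on initial conditions (a standard Grönwall argument, cf.\ \Cref{t:B-GI,t:ODEbdd}), there is a time $\Hor$ and a radius $\delta$ as above such that $\|\odestate_\Hor^\infty\|\le \tfrac12\|\odestate_0^\infty\|$ whenever $\|\odestate_0^\infty\| = 1$; here one uses that the flow map at time $\Hor$ is continuous and maps $0$ to $0$, and that all trajectories from the unit sphere eventually enter the $\delta$-ball (by compactness, at a uniform time) and thereafter contract (by shrinking $\delta$ if needed so that solutions from the $\delta$-ball stay within $\tfrac12$ of the origin for all later times). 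By scaling (ii), the contraction $\|\odestate_\Hor^\infty\|\le\tfrac12\|\odestate_0^\infty\|$ then holds for \emph{all} initial conditions, not just those on the unit sphere. Iterating over blocks of length $\Hor$ gives $\|\odestate_{n\Hor}^\infty\| \le 2^{-n}\|\odestate_0^\infty\|$, and interpolating over each block using the crude Grönwall bound of \Cref{t:ODEbdd}(i) (applied to $\barf_\infty$, which is Lipschitz and vanishes at $0$, so \Cref{t:ODEbdd}(ii) gives $\|\odestate_t^\infty\|\le e^{\Lip_f t}\|\odestate_0^\infty\|$) yields the stated bound $\|\odestate_t^\infty\| \le R e^{-\rho t}\|\odestate_0^\infty\|$ with $\rho = \log 2/\Hor$ and $R = 2\,e^{\Lip_f \Hor}$.

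The main obstacle is the last step of part (iii): extracting a \emph{uniform} contraction time $\Hor$ and contraction factor over the unit sphere from the bare hypothesis of (local) asymptotic stability. The scaling invariance (ii) is exactly what makes this work — it converts a uniform statement on the compact sphere into a global statement — but one must be careful to combine (a) the uniform entry time into the $\delta$-ball (compactness of the sphere plus pointwise convergence), (b) the forward-invariance/contraction of a small ball (shrinking $\delta$), and (c) continuous dependence on initial data, in the right order. Everything else is routine homogeneity algebra and Grönwall estimates already available in the excerpt.
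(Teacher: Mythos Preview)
Your proposal is correct and follows essentially the same approach as the paper: homogeneity from the definition for (i), direct substitution for (ii), and for (iii) a uniform half-contraction time combined with iteration and Gr\"onwall interpolation, yielding $\rho=\log 2/\Hor$ and $R=2e^{\Lip_f\Hor}$. The only cosmetic difference is that the paper extracts the uniform contraction directly on the $\epsy$-ball (where local asymptotic stability already furnishes the uniformity) and then scales outward to all of $\Re^d$, whereas you first scale to obtain global convergence and then run the compactness argument on the unit sphere; the two orderings are equivalent.
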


\begin{proof} 
Consider first the scaling result in part (i):    from the definition \eqref{e:barfinfty_QSA},   with $s>0$,  
\[
\barf_\infty (s \theta) = s    \lim_{r \to \infty }(s r)^{-1}\barf( sr\theta)   =  s\barf_\infty (\theta)
\]
The case $s=0$ is trivial, since it is clear that $\barf_\infty (\Zero) =0$.  This establishes (i).

Next, write
\[
\odestate_t^\infty = \odestate_0^\infty + \int_0^t \barf_\infty ( \odestate_\tau^\infty) \, d\tau 
\]
Multiplying both sides by $s$ and applying (i) gives (ii).

Under the assumption that the origin is asymptotically stable,  there exists $\epsy>0$ such that 
$\lim_{t\to\infty}  \odestate_t = \Zero$,
whenever $\| \odestate_0 \|\le \epsy$.   Moreover, the convergence is uniform:    there exists $T_0>0$ such that 
\[
\| \odestate_{T_0} \| \le \half \epsy\qquad\textit{whenever    $\| \odestate_0 \|\le \epsy$}
\]
Next,  apply scaling:   for any initial condition $ \odestate_0$,  
consider $y_t = s\odestate_t^\infty $   using $s = \epsy/\| \odestate_0 \|$, chosen so that $ \|y_0 \| =\epsy$.
  Then $\|y_{T_0}\|  \le \half \epsy  = \half \|y_0 \|$,  implying  
\[
\| \odestate_{T_0} \| \le \half   \| \odestate_0\|   \qquad \odestate_0 \in\Re^d
\]
This easily implies (iii) by iteration, as follows:
for any $t$ we can write $t= n T_0 + t_0$,  with $0\le t_0 < T_0$, so that  
\[
\| \odestate_{t} \| \le \half   \| \odestate_{(n-1) T_0 + t_0} \|   \le 2^{-n}   \| \odestate_{ t_0} \|    
\]   
   Grönwall's Inequality gives $  \| \odestate_{ t_0} \|    \le e^{\Lip_f}  \| \odestate_{0} \|   $, so that
   \sfb{   \Cref{t:ODEbdd}
 in current draft}
 \[
\| \odestate_{t} \| \le  2    e^{\Lip_f}\,  2^{-(n+1) }   \| \odestate_{0} \|    
\]
where the right hand side has been arranged to make use of the bound $t\le (n+1)T_0$, giving  $2^{-(n+1) } \le \exp( -\log(2) t/T_0 )$.  We arrive at the bound in   (iii) with $R= 2 e^{\Lip_f} $ and $\rho = \log(2) /T_0 $.
\end{proof}

\begin{lemma}
\label[lemma]{t:BMscalingSolidarity}  
Under the assumptions of \Cref{t:BMscaling},  for each  $T<\infty$ and $\epsy\in (0,1] $,   there exists  $K_T<\infty $ independent of $\epsy$,  and   $B_T(\epsy) <\infty$ such that for all solutions to \Cref{e:ODE_SA,e:BMscaling}
from common initial condition $\odestate_0$,
\[ 
  \| \odestate_t-\odestate_t^\infty \| \le   B_T(\epsy) +   K_T [1+ \| \odestate_0  \| ]  \epsy 
\]
\end{lemma}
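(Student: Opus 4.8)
The plan is to reduce the statement to a pointwise comparison between $\barf$ and $\barf_\infty$, and then to apply Grönwall's Inequality together with the crude a priori bound of \Cref{t:ODEbdd}.

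First I would establish the following estimate: for every $\epsy\in(0,1]$ there is a finite constant $C(\epsy)$ with
\[
\|\barf(\theta)-\barf_\infty(\theta)\|\le\epsy\|\theta\|+C(\epsy),\qquad\theta\in\Re^d.
\]
To prove this, set $g_r(\theta)=r^{-1}\barf(r\theta)$ for $r\ge 1$. By the Lipschitz hypothesis (A2) each $g_r$ is $\Lip_f$-Lipschitz, and $g_r(\Zero)=r^{-1}\barf(\Zero)$ stays bounded, so the family $\{g_r:r\ge 1\}$ is equi-Lipschitz and uniformly bounded on the unit sphere $S=\{\hat\theta:\|\hat\theta\|=1\}$. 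Since $g_r\to\barf_\infty$ pointwise by hypothesis, an Arzel\`a--Ascoli argument upgrades this to uniform convergence on the compact set $S$; hence there is $R(\epsy)\ge 1$ with $\sup_{\hat\theta\in S}\|g_r(\hat\theta)-\barf_\infty(\hat\theta)\|\le\epsy$ for all $r\ge R(\epsy)$. Writing a general $\theta\ne\Zero$ as $\theta=\|\theta\|\hat\theta$ and using the homogeneity $\barf_\infty(s\theta)=s\barf_\infty(\theta)$ from \Cref{t:BMscaling}(i), I obtain $\|\barf(\theta)-\barf_\infty(\theta)\|=\|\theta\|\,\|g_{\|\theta\|}(\hat\theta)-\barf_\infty(\hat\theta)\|\le\epsy\|\theta\|$ whenever $\|\theta\|\ge R(\epsy)$; for $\|\theta\|\le R(\epsy)$ the Lipschitz bounds $\|\barf(\theta)\|\le\|\barf(\Zero)\|+\Lip_f\|\theta\|$ and $\|\barf_\infty(\theta)\|\le\Lip_f\|\theta\|$ give $\|\barf(\theta)-\barf_\infty(\theta)\|\le C(\epsy):=\|\barf(\Zero)\|+2\Lip_f R(\epsy)$. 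Combining the two cases yields the displayed estimate.

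Next I would run Grönwall. Since $\barf_\infty$ is also $\Lip_f$-Lipschitz (a limit of $\Lip_f$-Lipschitz maps), for the solutions $\bfodestate$ of \eqref{e:ODE_SA} and $\bfodestate^\infty$ of \eqref{e:BMscaling} from the common initial condition $\odestate_0$ I would bound $e_t:=\|\odestate_t-\odestate_t^\infty\|$, with $e_0=0$, by
\[
\ddt e_t\le\|\barf(\odestate_t)-\barf_\infty(\odestate_t)\|+\|\barf_\infty(\odestate_t)-\barf_\infty(\odestate_t^\infty)\|\le\epsy\|\odestate_t\|+C(\epsy)+\Lip_f e_t,
\]
then invoke \Cref{t:ODEbdd}(i) to get $\|\odestate_t\|\le(B_f+\|\odestate_0\|)e^{\Lip_f T}$ for $t\le T$, and finally the integral form of Grönwall's Inequality (\Cref{t:B-GI}) to conclude
\[
e_t\le e^{\Lip_f T}\!\int_0^T\!\bigl[\epsy(B_f+\|\odestate_0\|)e^{\Lip_f T}+C(\epsy)\bigr]ds=\epsy T e^{2\Lip_f T}(B_f+\|\odestate_0\|)+T e^{\Lip_f T}C(\epsy),\quad 0\le t\le T.
\]
Setting $K_T=T e^{2\Lip_f T}(B_f+1)$, which is independent of $\epsy$, and $B_T(\epsy)=T e^{\Lip_f T}C(\epsy)$, and using $(B_f+1)(1+\|\odestate_0\|)\ge B_f+\|\odestate_0\|$, the right-hand side is dominated by $B_T(\epsy)+K_T[1+\|\odestate_0\|]\epsy$, as required. (Uniqueness of solutions to both ODEs, both vector fields being Lipschitz, makes the phrase ``for all solutions'' unproblematic.)

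The main obstacle is the first step, and within it the passage from pointwise to uniform convergence of $g_r$ on the unit sphere: this is the only place where the structural hypotheses on $\barf_\infty$ (existence of the limit, continuity, and homogeneity via \Cref{t:BMscaling}) are genuinely exploited, and it is what makes the error split cleanly into a term scaling like $\epsy\|\theta\|$ and a $\theta$-independent remainder $C(\epsy)$. Once that estimate is secured, everything else is routine bookkeeping with Grönwall and the exponential a priori bound.
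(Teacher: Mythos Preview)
Your proof is correct and follows essentially the same route as the paper: first establish the estimate $\|\barf(\theta)-\barf_\infty(\theta)\|\le\epsy\|\theta\|+C(\epsy)$ via uniform convergence of the scaled vector fields on the unit sphere, then feed this into Gr\"onwall together with the a~priori bound from \Cref{t:ODEbdd}. The paper's write-up is more telegraphic in the first step (it asserts the uniform convergence directly from equi-Lipschitzness rather than naming Arzel\`a--Ascoli) and integrates $e^{\Lip_f\tau}$ exactly rather than bounding it by $e^{\Lip_f T}$, but these are cosmetic differences and your constants are equally valid.
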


\begin{proof} 
Denote
\[
\clE(\theta) = \| \barf(\theta) - \barf_\infty(\theta) \|
\]
so that by \Cref{t:BMscaling},  with $s=\|\theta\|$,
\[
s^{-1} \clE(\theta) =    \| \barf_s(\theta/s) - \barf_\infty(\theta/s) \|  
\]
Because the functions $\{\barf_s :  s\ge 1\}$ are uniformly Lipschitz continous,  the right hand side converges to zero uniformly in $\theta\neq \Zero$.   Consequently,
\[
\clE(\theta) =   o(\| \theta \|) 
\]
Let's think about what this means:  for any $\epsy>0$,  there exists $N(\epsy)<\infty$, such that $\clE(\theta) \le \epsy \| \theta \| $ whenever
   $\| \theta \| \ge N(\epsy)$.   From this we get the   simpler looking bound:
 \begin{equation}
\clE(\theta)   \le   B_\epsy +  \epsy \| \theta \| \,,   \qquad\textit{where} \ \  B_\epsy= \max  \{ \clE(\theta) :  \| \theta \| \le N(\epsy) \}
\label{e:BMerror1}
\end{equation}

For any initial condition $\odestate_0$ we compare the two solutions:
\[
\begin{aligned}
\odestate_t &= \odestate_0 + \int_0^t \barf ( \odestate_\tau) \, d\tau    
   \\
\odestate_t^\infty &= \odestate_0 + \int_0^t \barf_\infty ( \odestate_\tau^\infty) \, d\tau 
\end{aligned} 
\]
Write $z_t = \| \odestate_t-\odestate_t^\infty \|$ and use the preceding definition to obtain,
\[
\begin{aligned}
z_t &\le  \int_0^t  \|  \barf_\infty  ( \odestate_\tau)  -  \barf_\infty ( \odestate_\tau^\infty)  \| \, d\tau 
+ \int_0^t   \clE(\odestate_\tau)  \, d\tau
\\
  &\le   {\Lip_f}  \int_0^t  \|   \odestate_\tau -  \odestate_\tau^\infty  \| \, d\tau 
+ \int_0^t   \clE(\odestate_\tau)  \, d\tau
\end{aligned} 
\]

Grönwall's Inequality in its second form \eqref{e:GIii} holds, with $\beta_t \equiv {\Lip_f}$,   and $\alpha_t$ the second integral, giving
\[
z_t  \le   e^{{\Lip_f} t} \int_0^t   \clE(\odestate_\tau)  \, d\tau
		 \le   e^{{\Lip_f} t} \int_0^t  \bigl\{ B_\epsy + \epsy  \| \odestate_\tau\| \} \, d\tau
\]
where the second inequality uses \eqref{e:BMerror1}, with $\epsy>0 $ to be chosen.    
\Cref{t:ODEbdd} gives $ \| \odestate_\tau\| \le  \bigl\{ B_f + 
 \| \odestate_0  \|  \bigr\}  e^{{\Lip_f} \tau}$, so that 
\[
z_t = \| \odestate_t-\odestate_t^\infty \| \le t e^{{\Lip_f} t} B_\epsy   +  \epsy e^{{\Lip_f} t}   \bigl\{ B_f + 
 \| \odestate_0  \|  \bigr\} \bigl\{  {\Lip_f}^{-1} e^{{\Lip_f} t} \bigr\} 
\] 
\end{proof}

\Cref{t:LyapBM_QSA}   combined with  \Cref{t:convergenceV_QSA}   imply
\Cref{t:convergenceBM_QSA}.

\begin{proposition}
\label[proposition]{t:LyapBM_QSA} 
Under the assumptions of  
\Cref{t:convergenceBM_QSA},  there is a Lipschitz continuous function $V$ that saisfies (QSV1).   
\end{proposition}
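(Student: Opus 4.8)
The plan is to manufacture $V$ from a Lyapunov function for the ODE@$\infty$ \eqref{e:BMscaling} and then transport the drift to \eqref{e:ODE_SA} using the identification $\barf=\barf_\infty+\clE$ with $\clE(\theta)=\|\barf(\theta)-\barf_\infty(\theta)\|=o(\|\theta\|)$ established in the proof of \Cref{t:BMscalingSolidarity}. Write $\odestate^\infty_t(\theta)$ for the solution of \eqref{e:BMscaling} with $\odestate^\infty_0(\theta)=\theta$. By \Cref{t:BMscaling}(iii) there are $R<\infty$, $\rho>0$ with $\|\odestate^\infty_t(\theta)\|\le Re^{-\rho t}\|\theta\|$; fix $T_\infty$ with $Re^{-\rho T_\infty}\le\tfrac12$, so $\|\odestate^\infty_{T_\infty}(\theta)\|\le\tfrac12\|\theta\|$ for every $\theta$, and set
\[
V(\theta)\eqdef\int_0^{T_\infty}\|\odestate^\infty_t(\theta)\|\,dt,\qquad\theta\in\Re^d.
\]
This $V$ is nonnegative, and using $\ddt\|\odestate^\infty_t\|\ge-\Lip_f\|\odestate^\infty_t\|$ (valid since $\barf_\infty(\Zero)=\Zero$ and $\barf_\infty$ is $\Lip_f$-Lipschitz, by \Cref{t:BMscaling}) one gets $V(\theta)\ge\Lip_f^{-1}(1-e^{-\Lip_f T_\infty})\|\theta\|$. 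It is Lipschitz: Grönwall (as in \Cref{t:ODEbdd}) gives $\|\odestate^\infty_t(\theta)-\odestate^\infty_t(\theta')\|\le e^{\Lip_f t}\|\theta-\theta'\|$, hence $\|V(\theta')-V(\theta)\|\le\Lip_V\|\theta'-\theta\|$ with $\Lip_V=\Lip_f^{-1}(e^{\Lip_f T_\infty}-1)$. Finally $V$ is an exact Lyapunov function for \eqref{e:BMscaling}: by the flow (semigroup) property and the fundamental theorem of calculus,
\[
\ddt V(\odestate^\infty_t(\theta))=\|\odestate^\infty_{t+T_\infty}(\theta)\|-\|\odestate^\infty_t(\theta)\|\le-\tfrac12\|\odestate^\infty_t(\theta)\|,
\]
so $V(\odestate^\infty_{t_1}(\theta))-V(\odestate^\infty_{t_0}(\theta))\le-\tfrac12\int_{t_0}^{t_1}\|\odestate^\infty_t(\theta)\|\,dt$.

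Next I would transfer this drift to a solution $\bfodestate$ of \eqref{e:ODE_SA}. Fixing a time $s$ and horizon $T\in(0,1]$, shift time so $s=0$, and compare $\bfodestate$ with the ODE@$\infty$ trajectory $\odestate^\infty_t(\odestate_0)$ from the same initial point. Splitting and using the two displayed facts above,
\[
V(\odestate_T)-V(\odestate_0)\le-\tfrac12\int_0^T\|\odestate^\infty_t(\odestate_0)\|\,dt+\Lip_V\|\odestate_T-\odestate^\infty_T(\odestate_0)\|.
\]
The proof of \Cref{t:BMscalingSolidarity} supplies, via Grönwall, $\|\odestate_t-\odestate^\infty_t(\odestate_0)\|\le e^{\Lip_f t}\int_0^t\clE(\odestate_\tau)\,d\tau$, and with $\clE(\theta)\le B_\epsy+\epsy\|\theta\|$ from \eqref{e:BMerror1} (any $\epsy>0$, $B_\epsy<\infty$ once $\epsy$ is fixed) this is at most $e^{\Lip_f}\bigl(B_\epsy t+\epsy\int_0^t\|\odestate_\tau\|\,d\tau\bigr)$ for $t\le1$. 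Feeding this in — both to bound $\|\odestate_T-\odestate^\infty_T(\odestate_0)\|$ and to replace $\int_0^T\|\odestate^\infty_t(\odestate_0)\|\,dt$ by $\int_0^T\|\odestate_t\|\,dt$ — yields
\[
V(\odestate_T)-V(\odestate_0)\le-\bigl(\tfrac12-C_2\epsy\bigr)\int_0^T\|\odestate_t\|\,dt+C_1B_\epsy T
\]
for constants $C_1,C_2$ depending only on $\Lip_f,\Lip_V$. Choose $\epsy$ small enough that $C_2\epsy\le\tfrac18$.

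The remaining — and main — obstacle is to absorb the additive term $C_1B_\epsy T$. Since $\int_0^T\|\odestate_t\|\,dt\ge T\inf_{t\le1}\|\odestate_t\|$, it suffices that $\|\odestate_0\|$ large forces $\inf_{t\le1}\|\odestate_t\|$ to be bounded away from $\Zero$; this is not automatic, because \eqref{e:ODE_SA} can move a distance of order $\|\odestate_0\|$ in unit time. I would establish a linear-in-norm lower bound $\inf_{t\le1}\|\odestate_t\|\ge\kappa\|\odestate_0\|-C_3$ with $\kappa=\tfrac12e^{-\Lip_f}>0$, again by the comparison $\|\odestate_t\|\ge\|\odestate^\infty_t(\odestate_0)\|-\|\odestate_t-\odestate^\infty_t(\odestate_0)\|\ge e^{-\Lip_f}\|\odestate_0\|-\bigl(\text{Gr\"onwall error}\bigr)$, where the Gr\"onwall error is controlled using the a priori bound $\|\odestate_\tau\|\le(B_f+\|\odestate_0\|)e^{\Lip_f}$ of \Cref{t:ODEbdd} and a further shrinking of $\epsy$ so that $\epsy e^{2\Lip_f}\le\tfrac12e^{-\Lip_f}$. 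With $\epsy$ thus fixed (and $B_\epsy,C_3$ determined), set $c_0'=4C_1B_\epsy$, $c_0=(c_0'+C_3)/\kappa$, and $\delta_0=\tfrac18$: if $\|\odestate_0\|>c_0$ then $\inf_{t\le1}\|\odestate_t\|\ge c_0'$, so $C_1B_\epsy T\le\tfrac14\int_0^T\|\odestate_t\|\,dt$, and hence $V(\odestate_{s+T})-V(\odestate_s)\le-\delta_0\int_s^{s+T}\|\odestate_t\|\,dt$ after unshifting. Together with the Lipschitz bound $\Lip_V$ already obtained, this is exactly (QSV1), proving the proposition. The only delicacy is bookkeeping the order of choices — $T_\infty$ (hence $\Lip_V,C_1,C_2$), then $\epsy$ (two smallness conditions), then $c_0',C_3,c_0$ — and checking that the $\epsy\|\odestate_0\|$ cross-terms are genuinely dominated rather than merely small.
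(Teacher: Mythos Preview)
Your proposal is correct and takes a genuinely different (and in one respect more complete) route than the paper.

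The paper constructs $V$ in two layers: first $V^\infty(\theta)=\int_0^T\|\odestate^\infty_t\|\,dt$ along the ODE@$\infty$, and then a \emph{second} function $V(\theta)=\int_0^T V^\infty(\odestate_t)\,dt$ integrated along the \emph{original} ODE \eqref{e:ODE_SA}. The point of the outer integration is that the ``discrete-step'' bound $V^\infty(\odestate_T)\le V^\infty(\theta)-\tfrac{\delta}{4}\|\theta\|+K_V^\infty$ (obtained from \Cref{t:BMscalingSolidarity}) telescopes immediately into the integral drift $V(\odestate_s)-V(\odestate_0)\le-\tfrac{\delta}{4}\int_0^s\|\odestate_t\|\,dt+sK_V^\infty$. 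You instead take $V=V^\infty$ directly and transfer the drift by comparing the two flows on each window $[s,s+T]$ via the Gr\"onwall estimate from the proof of \Cref{t:BMscalingSolidarity}; the price is the additive term $C_1B_\epsy T$, which you absorb by proving the linear lower bound $\inf_{t\le1}\|\odestate_t\|\ge\kappa\|\odestate_0\|-C_3$. Both arguments arrive at the same structural inequality with an additive constant times $T$; the paper in fact stops there with the remark ``essentially equivalent to (QSV1)\dots I got lazy here'', so your explicit absorption step is more complete than what the paper writes out. Your route is more elementary (one Lyapunov function, not two); the paper's buys a cleaner intermediate drift computation but ultimately needs the same absorption you carry out.
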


\begin{proof}
Choose $T>0$ so that $ \| \odestate_t^\infty \|  \le \half  \| \theta \| $ when $t\ge T$,  for any solution to \Cref{e:BMscaling}, from any initial condition $\odestate_0^\infty =\theta$.
We then define 
\[
\begin{aligned}
V^\infty(\theta) & = \int_0^T   \| \odestate_t^\infty \|  \,  dt\,,\qquad   && \odestate_0^\infty =\theta
   \\
V(\theta) & = \int_0^T V( \odestate_t )   \,  dt\,,\qquad   && \odestate_0 =\theta
\end{aligned} 
\]
The Grönwall Inequality implies that each is a Lipschitz continuous function of $\theta$.  Moreover, applying \Cref{t:BMscaling} it follows that  the first is radially homogeneous:   $V^\infty(s\theta) = sV^\infty(\theta)$ for each $\theta$ and $s>0$,   and satisfies the lower bound for some $\delta>0$:
\[
V^\infty(\theta)\ge \delta \|\theta\| 
\]
Consequently, this is a Lyapunov function for the ODE@$\infty$:
 for each initial condition $\odestate_0^\infty =\theta $,  
\[
V^\infty(\odestate_T^\infty) = \int_0^T   \| \odestate_{t+T}^\infty \|  \,  dt  \le \half V^\infty(\theta)   \le   V^\infty(\theta) - \half \delta \| \theta\|
\]

The next step is to show that  a similar bound holds with $\odestate_T^\infty$ replaced by $\odestate_T$.  Let $\Lip_V$ denote the Lipschitz constant for $V^\infty$.    The bound above combined with
\Cref{t:BMscalingSolidarity}  gives
\[
V^\infty(\odestate_T)    \le  V^\infty(\theta) - \half \delta \| \theta\|   +  \Lip_V \Bigl(    B_T(\epsy) +   K_T [1+ \| \theta  \| ]  \epsy   \Bigr)
\]
Fix $\epsy\in (0,1)$ so that 
\[
\Lip_V    K_T   \epsy    \le   \delta/4
\]
giving
\[
V^\infty(\odestate_T)    \le  V^\infty(\theta) - \fourth \delta \| \theta\|   + K_V^\infty
\]
with $K_V^\infty =  \Lip_V  (    B_T(\epsy) +   K_T     )$.

To complete the proof, write
\[
V(\odestate_{s})   = \int_s^T V^\infty ( \odestate_t )   \,  dt    +    \int_0^s V^\infty ( \odestate_{T+t})   \,  dt\
\]
The preceding bound gives
\[
V^\infty ( \odestate_{T+t})      \le V^\infty ( \odestate_{t})  - \fourth \delta \|  \odestate_{t}\|   + K_V^\infty
\]
so that
\[
V(\odestate_{s})  \le V(\odestate_0)        - \fourth \delta    \int_0^s \|  \odestate_{t}\| \,  dt  +    s  K_V^\infty
\]
This bound is essentially equivalent to (QSV1).   \notes{I got lazy here!}
\end{proof}

\section{Deterministic Markovian model}
 \label{s:quasiMarkov}

Here we explain how to verify  assumption (A5),  and obtain representations for the solution to Poisson's equation.   To  simplify notation we fix $\theta$ and $1\le i\le d$,   denote $g(\qsaprobe_t)\eqdef f_i (\theta, \qsaprobe_t)$,   and consider the associated Poisson equation \eqref{e:PoissonA1} in the new notation:
\notes{To do:  forcing function does not need normalization -- correct in body}
\begin{equation}
\hag( \qsaprobe_{t_0} ) = 
\int_{t_0} ^{t_1}   [ g( \qsaprobe_t)  - \barg] \, dt    + \hag( \qsaprobe_{t_1} )  \,,\qquad 0\le t_0\le t_1
\label{e:TSergodicFishSoln}
\end{equation} 
Recall that $g$ is known as the forcing function,   $\barg$ its steady-state mean,  and $\hag$ is the solution (known as the relative value function in some applications).

Assumption (A5) is analogous to common assumptions in the study of simulation or stochastic approximation algorithms when $\bfqsaprobe$ is a Markov process \cite{glymey96a,benmetpri90}.   Conditions for a well behaved solution to Poisson's equation are available, subject to conditions on the Markov process and the function.  In particular, for stochastic differential equations (SDEs), a non-degeneracy condition known as hypoellipticity is a first step,  and then a solution to Poisson's equation exists subject to a   Lyapunov function drift condition \cite{glymey96a}.     

While the process $\bfqsaprobe$ defined by \eqref{e:qsaDynamics} is Markovian, it is purely degenerate in the sense that Poisson's equation \eqref{e:TSergodicFishSoln}
 in differential form is a first order PDE:
\[
g (z) + 
\partial  \hag (z)  \cdot \qsaDyn(z)   =  \barg\,,\qquad z\in \prstate
\]  
To the best of our knowledge, a general theory for solutions is not presently available.     \Cref{t:expProbeErgodic} concerns the
 special case \eqref{e:expProbe} for which $\qsaDyn$ is a linear function of $z\in  \Co^K$.

\begin{lemma}
\label[lemma]{t:expProbeErgodic}
Suppose that $g\colon \Co^K\to \Re$ admits the Taylor series representation,
\begin{equation}
g(z)   = \sum_{n_1,\dots,n_K} a_{n_1,\dots,n_K}   z_1^{n_1}  \cdots z_K^{n_K} \,,\qquad z\in\prstate\,, 
\label{e:TSergodic}
\end{equation}
where the sum is over all $K$-length sequences in $\nat^K$, and the  coefficients  $\{ a_{n_1,\dots,n_K}  \} \subset \Co^K$ are absolutely summable:
\begin{equation}
 \sum_{n_1,\dots,n_K}   |a_{n_1,\dots,n_K}  |<\infty
\label{e:FourierAbs}
\end{equation} 
Then,  with  $\bfqsaprobe$ defined in \eqref{e:expProbe},
\begin{romannum}
\item 
 The ergodic limit holds:
\[
\barg = 
\lim_{T\to\infty}\frac{1}{T} \int_0^T  g( \qsaprobe_t) \, dt  =     \int_0^1\cdots \int_0^1  g\bigl( e^{2\pi j t_1}  ,\dots , e^{2\pi j t_K}   \bigr) \, dt_1\cdots dt_K  
\]
where  $\barg = a_0$  (the coefficient when   $n_i=0$ for each $i$).

\item   There exists a solution $\hag\colon \Co^K\to \Re$  to \eqref{e:TSergodicFishSoln}.  
It is of the form \eqref{e:TSergodic}:
\begin{equation}
\hag(x)   = \sum_{n_1,\dots,n_K} \hat{a}_{n_1,\dots,n_K}   x_1^{n_1}  \cdots x_K^{n_K} 
\label{e:TSergodicFish}
\end{equation}
in which $|\hat{a}_{n_1,\dots,n_K}  | \le |{a}_{n_1,\dots,n_K}  |/\omega_1$ for each  coefficient.
\end{romannum}
\end{lemma}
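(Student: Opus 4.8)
The plan is to expand everything along the probing trajectory into its Fourier series and reduce to the elementary antiderivative of a complex exponential. Fix $\theta$ and the coordinate index so that we may work with the scalar $g$ and its representation \eqref{e:TSergodic}. Since every coordinate $z_k$ has modulus $1$ on $\prstate$, the absolute summability \eqref{e:FourierAbs} makes the series for $g$ converge uniformly on $\prstate$, so it may be integrated term by term along $\bfqsaprobe$. Substituting $\qsaprobe_t$ from \eqref{e:expProbe} turns the monomial indexed by $n=(n_1,\dots,n_K)$ into the pure tone $\prod_k (e^{j\omega_k t})^{n_k} = e^{j\Omega_n t}$, where $\Omega_n\eqdef\sum_k \omega_k n_k$. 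The key elementary observation — and the source of the constant $\omega_1$ in the claimed bound — is that because the exponents range over $\nat^K$ and every $\omega_k$ is positive with smallest value $\omega_1$, one has $\Omega_n=0$ precisely when $n=0$, and otherwise $\Omega_n\ge\omega_1>0$.

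For part (i): $\frac{1}{T}\int_0^T e^{j\Omega_n t}\,dt$ equals $1$ when $n=0$ and equals $(e^{j\Omega_n T}-1)/(j\Omega_n T)$, of modulus at most $2/(\omega_1 T)$, when $n\neq 0$. Since each summand $a_n\,\frac{1}{T}\int_0^T e^{j\Omega_n t}\,dt$ is dominated by the summable sequence $|a_n|$, term-by-term integration is legitimate and yields $\frac{1}{T}\int_0^T g(\qsaprobe_t)\,dt\to a_0$ as $T\to\infty$. The very same term-by-term computation with $z_k$ replaced by $e^{2\pi j t_k}$ and each $t_k$ integrated over $[0,1]$, using $\int_0^1 e^{2\pi j n t}\,dt=\mathbf 1\{n=0\}$, identifies the stated iterated integral with $a_0$ as well; hence $\barg=a_0$.

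For part (ii): define $\hag$ by \eqref{e:TSergodicFish} with $\hat a_0\eqdef 0$ (the free additive constant) and $\hat a_n\eqdef -a_n/(j\Omega_n)$ for $n\neq 0$. Then $|\hat a_n|=|a_n|/\Omega_n\le |a_n|/\omega_1$, which is exactly the asserted coefficient bound and also gives $\sum_n|\hat a_n|\le\omega_1^{-1}\sum_n|a_n|<\infty$, so $\hag$ is a well-defined continuous function on $\prstate$ of the form \eqref{e:TSergodic}. Finally, for $0\le t_0\le t_1$ one has $g(\qsaprobe_t)-\barg=\sum_{n\neq 0}a_n e^{j\Omega_n t}$, and integrating term by term (each term bounded by $(t_1-t_0)|a_n|$) via $\int_{t_0}^{t_1}e^{j\Omega_n t}\,dt=(e^{j\Omega_n t_1}-e^{j\Omega_n t_0})/(j\Omega_n)$ produces $\int_{t_0}^{t_1}[g(\qsaprobe_t)-\barg]\,dt=\hag(\qsaprobe_{t_0})-\hag(\qsaprobe_{t_1})$, which is \eqref{e:TSergodicFishSoln} after rearrangement.

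There is no deep obstacle here; the only points needing care are the two interchanges of summation and integration, both handled by \eqref{e:FourierAbs} together with $|z_k|\equiv 1$ on $\prstate$, and the recognition that restricting the exponents to $\nat^K$ is precisely what keeps $\Omega_n$ bounded away from $0$. I would add a remark that for a genuinely two-sided Fourier expansion on the torus this fails: $\Omega_n$ can be arbitrarily small (small divisors), a bounded solution to Poisson's equation need not exist, and one would instead have to impose a Diophantine / rational-independence hypothesis on $\{\omega_k\}$, under which the clean estimate $|\hat a_n|\le |a_n|/\omega_1$ no longer holds.
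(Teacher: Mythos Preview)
Your proof is correct and follows essentially the same route as the paper's: substitute $\qsaprobe_t$ into the Taylor expansion to turn each monomial into the pure tone $e^{j\Omega_n t}$, invoke absolute summability to justify term-by-term integration, and read off $\hat a_n = a_n j/\Omega_n$ (your $-a_n/(j\Omega_n)$). Your write-up is in fact more complete than the paper's, since you explicitly verify part~(i), spell out why $\Omega_n\ge\omega_1$ for $n\neq 0$ (which the paper leaves implicit in stating the coefficient bound), and add the useful remark about small divisors for two-sided expansions.
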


\begin{proof}
Complex exponentials and the Fourier representation are used to obtain the simple formula:
\[
g( \qsaprobe_t)=   \sum_{n_1,\dots,n_K} a_{n_1,\dots,n_K} 
		  \exp\bigl(  \{ n_1\omega_1 +\cdots+ n_K \omega_K \} jt \bigr)
\]
The absolute-summability assumption \eqref{e:FourierAbs} justifies Fubini's Theorem:
\[
\int_{t_0}^{t_1}  [ g( \qsaprobe_t)  - \barg]  \, dt  =   \sum_{n_1,\dots,n_K} a_{n_1,\dots,n_K} \int_{t_0}^{t_1} 
		  \exp\bigl(  \{ n_1\omega_1 +\cdots+ n_K \omega_K \} jt \bigr)\, dt  =  \hag( \qsaprobe_{t_0} )  - \hag( \qsaprobe_{t_1} ) 
\]
where $\hag$ is given by \eqref{e:TSergodicFish} with $  \hat{a}_{0} =0 $   (that is,  $n_k=0$ for each $k$),  and for all other coefficients
 \[
  \hat{a}_{n_1,\dots,n_K} =   {a}_{n_1,\dots,n_K}  \{ n_1\omega_1 +\cdots+ n_K \omega_K \}^{-1} j
 \]
\end{proof}

The lemma then justifies (A5) provided $f_i(\theta,\varble)$ satisfies the Taylor series bound for each $i$ and $\theta$,  along with the derivatives $\frac{\partial}{\partial\theta_j}   f_i(\theta,\varble)$, for each $i,j$.  While an explicit formula for $\haf$ is not required in any algorithm,    bounds may be valuable in finer convergence rate analysis of QSA algorithms. 
In particular,    the approximation of the scaled error $\scerror_t\eqdef \frac{1}{a_t}(\ODEstate_t-\barODEstate_t)$ obtained in  \Cref{t:Couple_main} depends on $\haf(\theta^\ocp, \qsaprobe_t)$.


\section{Technical Proofs}
\label{s:tech}

The solution to \eqref{e:ODE_SA} is related to the solution to \eqref{e:ODE_haSA} through a temporal transformation:  the proof of \Cref{t:barODEstate} is an application of the chain rule.
\begin{lemma}
	\label[lemma]{t:barODEstate}
	Let $\{ \odestate_\SAtime : \SAtime\ge \SAtime_0 \}$ denote the solution to \eqref{e:ODE_SA} with   $\odestate_{\SAtime_0} =   \ODEstate_{t_0}$,   and $\SAtime_0 = g_{t_0}$ \emph{(with time-change defined in \eqref{e:SAtime}).}     The solution to \eqref{e:ODE_haSA} is then given by
	\[
	\barODEstate_t  = \odestate_\SAtime \,,   \quad t\ge t_0\, ,  \qquad  \textit{with }\quad \SAtime= g_t =  \int_0^t a_r\, dr
	\]
	\qed
\end{lemma}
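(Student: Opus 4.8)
The statement is a direct application of the chain rule, so the ``proof'' amounts to verifying that the proposed formula satisfies the initial value problem \eqref{e:ODE_haSA} and then invoking uniqueness. First I would record that under (A1) the map $t\mapsto g_t = \int_0^t a_r\,dr$ is $C^1$ with $g_t' = a_t$ by the fundamental theorem of calculus (the integrand $a_r$ is continuous, being of the form \eqref{e:gainQSA}), and that $g$ is strictly increasing on $[t_0,\infty)$ because $a_r>0$ there, so the time change $\SAtime = g_t$ is a genuine reparametrization and $\SAtime_0 = g_{t_0}$ is well defined. Likewise, the Lipschitz hypothesis (A2) on $\barf$ guarantees that \eqref{e:ODE_SA} has a unique solution $\{\odestate_\SAtime : \SAtime\ge\SAtime_0\}$ from the initial condition $\odestate_{\SAtime_0} = \ODEstate_{t_0}$, and that \eqref{e:ODE_haSA} has a unique solution as well.

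Next I would define the candidate $\Psi_t \eqdef \odestate_{g_t}$ for $t\ge t_0$ and differentiate. By the chain rule,
\[
\ddt \Psi_t = \frac{d}{d\SAtime}\odestate_\SAtime\Big|_{\SAtime = g_t} \cdot \ddt g_t = \barf\bigl(\odestate_{g_t}\bigr)\, a_t = a_t\,\barf(\Psi_t),
\]
which is exactly the vector field in \eqref{e:ODE_haSA}. For the initial condition, evaluating at $t=t_0$ gives $\Psi_{t_0} = \odestate_{g_{t_0}} = \odestate_{\SAtime_0} = \ODEstate_{t_0}$, matching $\barODEstate_{t_0} = \ODEstate_{t_0}$.

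Finally, since \eqref{e:ODE_haSA} has a unique solution (Lipschitz vector field $a_t\,\barf(\cdot)$, with $a_t$ bounded on $[t_0,\infty)$), the function $\Psi$ just constructed must coincide with $\barODEstate$ on $[t_0,\infty)$, giving $\barODEstate_t = \odestate_{\SAtime}$ with $\SAtime = g_t$ as claimed. There is no real obstacle here; the only points worth stating explicitly are the well-definedness of the time change (positivity of $a_r$) and the appeal to uniqueness, both of which are immediate from (A1)--(A2). \qed
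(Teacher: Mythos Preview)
Your proof is correct and follows the same approach the paper indicates: the paper simply states that ``the proof of \Cref{t:barODEstate} is an application of the chain rule'' and omits any details, while you have spelled out the chain-rule computation and the uniqueness argument explicitly.
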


\begin{proof}[Proof of \Cref{t:scaled_error_rep}]
   Taking derivatives of each side of \eqref{e:scaled_error} gives, by the product rule,
\[
\begin{aligned} 
\ddt \scerror_t 
&=
  \ddt 
 \Bigl(
 \frac{1}{a_t}  \bigl(  \ODEstate_t - \barODEstate_t \bigr) 
  \Bigr)
  \\
 &=  \Bigl(
  - \frac{1}{a_t ^2}  \ddt a_t   \Bigr)     \bigl(  \ODEstate_t - \barODEstate_t \bigr)   +    f(\ODEstate_t,\qsaprobe_t)     -  \barf  ( \barODEstate_t  )   
    \\ 
  &= 
  		r_t   \scerror_t     +    f(\ODEstate_t,\qsaprobe_t)     -  \barf  ( \barODEstate_t  )   
\end{aligned} 
\]
where in the final equation we used
the chain rule for the derivative of a logarithm
(recall that  $\dlstep_t = -   \ddt \log(a_t)     $), and
 the definition of $\scerror_t$.
    
   On adding and subtracting   $  \barf  ( \ODEstate_t  )  $,
we arrive at  a suggestive decomposition:
\[
\begin{aligned} 
\ddt \scerror_t   &=     \dlstep_t   \scerror_t     +  \underbrace{   \bigl[ \barf (\ODEstate_t )     -  \barf  ( \barODEstate_t  )    \bigr] }_{R_t: \ \text{almost linear}   }
									  +    \underbrace{   \bigl[    f(\ODEstate_t,\qsaprobe_t)     -  \barf ( \ODEstate_t  )   \bigr] }_{\tilXi_t : \ \text{bounded disturbance}   } 
\end{aligned} 
\]
That is,  under the assumptions of \Cref{t:scaled_error_rep},
\sfb{Can my audience understand?}
\[
R_t = A (\barODEstate_t)     \bigl[  \ODEstate_t  - \barODEstate_t      \bigr]       +   \epsy^1_t
\]
where,  under the Lipschitz condition on $A$,
\[
 \|  \epsy^1_t \| = o( \|  \ODEstate_t  - \barODEstate_t   \| )  = o(a_t \| \scerror_t  \| )  
\]
This completes the proof of \eqref{e:scaledErrorRep},  with $    \Delta_t    =\epsy^1_t /a_t$.

If     $a_t = g/(1+t)$ we obtain $\dlstep_t =  1/(1+t) = g^{-1} a_t$.  
 Equation \eqref{e:scaledErrorRep} thus implies the approximation \eqref{e:scaledErrorRep_cor1},
where the definition of $ \|\Delta_t  \| $ is modified to include the error from  replacing $A (\barODEstate_t)   $ with its limit $ A^\ocp= A (\theta^\ocp) $.

Consider next the ``larger gain''   $a_t = g/(1+t)^\rho$, with $\rho \in (0, 1)$, so that $\dlstep_t =  \rho/(1+t)$.  
The simpler approximation \eqref{e:scaledErrorRep_cor_rho} follows, in which   $ \|\Delta_t  \| $   has two additional terms:  once again, we replace $A (\barODEstate_t)   $ with its limit $ A^\ocp  $, and also use the approximation  $ r_t  =  o(a_t)$.    
\end{proof}


Recall   the change of variables: $Y_t \eqdef  \scerror_t -  \XiI_t(\ODEstate_t)$
was introduced as a means to remove  the non-vanishing noise $\tilXi_t$  in \eqref{e:scaledErrorRep}.
\Cref{t:YQSA}  establishes a differential equation  for $\bfmY$, similar to the quasi stochastic approximation algorithm \eqref{e:QSAgen}. 

\begin{proposition}
\label[proposition]{t:YQSA}
Under the assumptions of  \Cref{t:Couple_main}, suppose that $ \dlstep_t \le b a_t$ for  a constant $b$,  and all $t\ge 0$.
Then, the vector-valued process $\bfmY$ satisfies the differential equation,
\begin{equation}
\ddt Y_t   =    
    a_t      \left[A^\ocp Y_t   +  \Delta^Y_t   -  \Upupsilon_t      +    A^\ocp  \XiI_t   \right]
+
 \dlstep_t   [ Y_t   +   \XiI_t   ]
\label{e:YQSA}
\end{equation}
where  $ \XiI_t  =  \XiI_t (\theta^\ocp) $,  and
   $ \| \Delta^Y_t \|   = o(1 + \| Y_t\|)$ as $t\to\infty$.   That is, for scalars $\{ \epsy_t^Y\}$,
\[
\| \Delta_t\| \le \epsy_t^Y \{ 1 + \| Y_t \|\}\,,\qquad   t\ge t_0\,
\]
with $\epsy_t^Y\to 0$ as $t\to\infty$.
\end{proposition}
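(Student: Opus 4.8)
The plan is to differentiate the defining identity $Y_t = \scerror_t - \XiI_t(\ODEstate_t)$ and read off \eqref{e:YQSA}. For $\ddt\scerror_t$ I would use the representation $\ddt\scerror_t = [\dlstep_t I + a_t A(\barODEstate_t)]\scerror_t + a_t\Delta_t + \tilXi_t$ of \Cref{t:scaled_error_rep}, with $\tilXi_t = f(\ODEstate_t,\qsaprobe_t) - \barf(\ODEstate_t)$. For $\ddt\XiI_t(\ODEstate_t)$ I would first note that, since $f$ is $C^1$ in $\theta$, the map $\theta\mapsto\XiI_t(\theta) = \int_0^t[f(\theta,\qsaprobe_r) - \barf(\theta)]\,dr$ is $C^1$ with $\partial_\theta\XiI_t(\theta) = \int_0^t[A(\theta,\qsaprobe_r) - A(\theta)]\,dr = \haA(\theta,\qsaprobe_0) - \haA(\theta,\qsaprobe_t)$, the last equality being the Poisson identity for $\haA$ in Assumption~(A5)(ii). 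Combining this with $\ddt\ODEstate_t = a_t f(\ODEstate_t,\qsaprobe_t)$ and the chain rule gives
\[
\ddt\XiI_t(\ODEstate_t) = \tilXi_t + a_t\,[\haA(\ODEstate_t,\qsaprobe_0) - \haA(\ODEstate_t,\qsaprobe_t)]\,f(\ODEstate_t,\qsaprobe_t).
\]
Subtracting, the two copies of $\tilXi_t$ cancel — the whole point of the change of variables — leaving
\[
\ddt Y_t = [\dlstep_t I + a_t A(\barODEstate_t)]\scerror_t + a_t\Delta_t - a_t\,[\haA(\ODEstate_t,\qsaprobe_0) - \haA(\ODEstate_t,\qsaprobe_t)]\,f(\ODEstate_t,\qsaprobe_t).
\]

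Next I would substitute $\scerror_t = Y_t + \XiI_t(\ODEstate_t)$ and \emph{linearize at $\theta^\ocp$}: replace $A(\barODEstate_t)$ by $A^\ocp$, replace the copies of $\XiI_t(\ODEstate_t)$ multiplying the $\dlstep_t I$ and $a_t A^\ocp$ factors by $\XiI_t = \XiI_t(\theta^\ocp)$, and replace $[\haA(\ODEstate_t,\qsaprobe_0) - \haA(\ODEstate_t,\qsaprobe_t)]f(\ODEstate_t,\qsaprobe_t)$ by $\Upupsilon_t$, in each case throwing the mismatch into a single term. This produces exactly \eqref{e:YQSA}, with $a_t\Delta^Y_t$ defined as the sum of $a_t\Delta_t$, of $a_t(A(\barODEstate_t) - A^\ocp)(Y_t + \XiI_t(\ODEstate_t))$, of $\dlstep_t(\XiI_t(\ODEstate_t) - \XiI_t)$, of $a_t A^\ocp(\XiI_t(\ODEstate_t) - \XiI_t)$, and of $-a_t\{[\haA(\ODEstate_t,\qsaprobe_0) - \haA(\ODEstate_t,\qsaprobe_t)]f(\ODEstate_t,\qsaprobe_t) - \Upupsilon_t\}$.

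The remaining work — and the only substantive part — is to check $\|\Delta^Y_t\| = o(1+\|Y_t\|)$ after dividing by $a_t$. Two preliminary bounds do most of it: (a) $\{\XiI_t\}$ is bounded by (A5), hence by Lipschitz continuity of $\haf$ (through \eqref{e:NoiseInt}) so is $\{\XiI_t(\ODEstate_t)\}$ once $\ODEstate_t$ is near $\theta^\ocp$, whence $\|\scerror_t\| \le \|Y_t\| + \|\XiI_t(\ODEstate_t)\| = O(1+\|Y_t\|)$; and (b) since $\ODEstate_t - \theta^\ocp = a_t\scerror_t + (\barODEstate_t - \theta^\ocp)$ with $\|\barODEstate_t - \theta^\ocp\| = o(a_t)$ by \Cref{t:Fast_rho}, solving the resulting inequality (possible because $a_t\to 0$ makes the self-referential coefficient $<1$ for large $t$) gives $\|\ODEstate_t - \theta^\ocp\| = O(a_t(1+\|Y_t\|))$. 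Given (a)–(b): the $\Delta_t$ term is $o(\|\scerror_t\|) = o(1+\|Y_t\|)$ by \Cref{t:scaled_error_rep}; the $A(\barODEstate_t) - A^\ocp$ term is $o(1)\cdot O(1+\|Y_t\|)$ since $A$ is continuous and $\barODEstate_t\to\theta^\ocp$; the two $\XiI_t(\ODEstate_t) - \XiI_t$ contributions are $O(\|\ODEstate_t - \theta^\ocp\|) = O(a_t(1+\|Y_t\|))$, and for the one with prefactor $\dlstep_t$ this is precisely where the hypothesis $\dlstep_t \le b a_t$ is needed, turning $\dlstep_t\cdot O(a_t(1+\|Y_t\|))$ into $a_t\cdot o(1+\|Y_t\|)$; and the $\Upupsilon_t$-mismatch is $O(\|\ODEstate_t - \theta^\ocp\|)$ by Lipschitz continuity of $\haA$ and $f$ in $\theta$ together with boundedness of $\haA(\theta^\ocp,\cdot)$ and $f(\theta^\ocp,\cdot)$ on the compact set $\prstate$. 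The main obstacle will be this bookkeeping rather than any single estimate: one must confirm that every mismatch, once its $a_t$ (or $\dlstep_t$) prefactor is stripped off, is genuinely $o(1+\|Y_t\|)$ and not merely $O(1+\|Y_t\|)$, the delicate points being the $\dlstep_t$-weighted term (handled by $\dlstep_t\le ba_t$, which holds for all gains in \Cref{t:Couple_main}) and the self-referential bound (b) on $\|\ODEstate_t - \theta^\ocp\|$.
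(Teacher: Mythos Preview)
Your proposal is correct and follows essentially the same route as the paper: differentiate $Y_t=\scerror_t-\XiI_t(\ODEstate_t)$ using \Cref{t:scaled_error_rep} and the chain rule so that the $\tilXi_t$ terms cancel, then substitute $\scerror_t=Y_t+\XiI_t(\ODEstate_t)$, linearize at $\theta^\ocp$, and bound each mismatch term. The only difference is cosmetic: the paper groups the error as $\Delta^Y_t=\Delta^{\text{a}}_t+\Delta^{\text{b}}_t$ and, rather than deriving your quantitative bound (b), simply invokes the already-established convergence $\ODEstate_t\to\theta^\ocp$ to conclude $\|\XiI_t(\ODEstate_t)-\XiI_t\|$ and $\|\Upupsilon_t(\ODEstate_t)-\Upupsilon_t\|$ are $o(1)$ directly---your self-referential estimate is valid but not needed.
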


\begin{proof}
Using the chain rule, we have
\[
\begin{aligned}
\ddt \{  \XiI_t(\ODEstate_t)  \} &=  \{  f(\ODEstate_t, \qsaprobe_t) -\barf(\ODEstate_t) \}  + \partial_\theta \XiI_t(\ODEstate_t) \cdot \{ \ddt \ODEstate_t \}
   \\
            & = \tilXi_t  +   \partial_\theta \XiI_t(\ODEstate_t) \cdot \{ a_t f(\ODEstate_t, \qsaprobe_t)    \}
\end{aligned} 
\]
where the second equation follows from the definition $\tilXi_t \eqdef f(\ODEstate_t, \qsaprobe_t) -\barf(\ODEstate_t)   $,
and the dynamics \eqref{e:QSAgen}.
Rearranging terms we obtain  
\begin{equation}
\tilXi_t  
	=
\ddt \{  \XiI_t(\ODEstate_t)  \} -  a_t   \Upupsilon_t(\ODEstate_t)
\label{e:QSAnoiseRep}
\end{equation} 
 where $ \Upupsilon_t(\ODEstate_t)  =  \partial_\theta \XiI_t \, (\ODEstate_t) \cdot  f(\ODEstate_t, \qsaprobe_t) $
 
The following is then obtained on substitution into \eqref{e:scaledErrorRep}:
\[
\ddt Y_t   =    
    a_t      \left[A (\barODEstate_t) Y_t   +  \Delta_t   - \Upupsilon_t(\ODEstate_t)     +    A (\barODEstate_t)  \XiI_t(\ODEstate_t)   \right]
+
 \dlstep_t   [ Y_t   +   \XiI_t(\ODEstate_t)   ]
\]
To go from this ODE to \eqref{e:YQSA} we must bound the error:
\[
\begin{aligned}
\Delta^Y_t  & \eqdef  \Delta^{\text{a}}_t + \Delta^{\text{b}}_t 
\\[.5em]
\textit{where}\quad
 \Delta^{\text{a}}_t  & =
 \Delta_t
 +
\bigl[ A (\barODEstate_t)  -  A^\ocp \bigr] \bigl(  Y_t    +  \XiI_t   \bigr)
\\
 \Delta^{\text{b}}_t  & =
   A (\barODEstate_t) \bigl(   \XiI_t(\ODEstate_t)    -      \XiI_t \bigr)
	 -  \bigl(  \Upupsilon_t(\ODEstate_t)    -  \Upupsilon_t  \bigr)
+
\frac{\dlstep_t }{a_t}      \bigl(   \XiI_t(\ODEstate_t)    -      \XiI_t \bigr)
\end{aligned} 
\]
We have    $ \| \Delta_t \|   = o(1 + \| Y_t\|)$  because of  the prior assertion that $ \|\Delta_t  \| =  o(  \| \scerror_t  \| )  $ as $t\to\infty$,   and the assumption that $\XiI_t(\ODEstate_t)  $ is bounded in $t$  (recall \eqref{e:PoissonA1}).
\Cref{t:Fast_rho} combined with Lipschitz continuity of $A$ then implies that  $ \|   \Delta^{\text{a}}_t  \|   = o(1 + \| Y_t\|)$.

To complete the proof, we must bound the error in replacing $\ODEstate_t$   with $\theta^\ocp$ in each appearance in $ \Delta^{\text{b}}_t $.   
The representation \eqref{e:NoiseInt} combined with (A5) implies that for a Lipschitz constant $\Lip$,
\[
\begin{aligned}
\| A (\barODEstate_t)  -  A^\ocp \|   \le  \Lip \| \barODEstate_t -\theta^\ocp \|      
\qquad 
\| \XiI_t(\ODEstate_t) - \XiI_t  \|    \le \Lip \| \ODEstate_t -\theta^\ocp \|   
\end{aligned} 
\]
and hence both error terms are vanishing, and also  $   \| \Upupsilon_t(\ODEstate_t)    -  \Upupsilon_t  \|           = o(1)$  by Lipschitz continuity of $  \partial_\theta \XiI_t(\theta)$:   from \eqref{e:NoiseInt}
and (A5):
\[
  \partial_\theta \XiI_t(\theta)   =   \haA(\theta,\qsaprobe_0) - \haA(\theta,\qsaprobe_t)
\]
These bounds show that $ \| \Delta^{\text{b}}_t   \| = o(1)$.
\end{proof}

%
%



\begin{proof}[Proof of \Cref{t:Couple_main}]  

First rewrite \eqref{e:YQSA} as
\[
\ddt Y_t   =    
a_t      \left[(A^\ocp + \frac{r_t}{a_t}I) Y_t   +  \Delta^Y_t   -  \Upupsilon_t      +    (A^\ocp + \frac{r_t}{a_t}I)   \XiI_t   \right]
\]
where $r_t/a_t = o(1)$ for $\rho<1$, and $r_t/a_t \equiv 1$ if $\rho=1$. The above can be regarded as a linear QSA ODE with vanishing disturbance. 
Let $r(\rho) = \ind\{\rho=1\}$  (equal to zero for $\rho<1$, and $r(1) =1$).  
Under the condition that $A^\ocp + r(\rho)I$ is Hurwitz, the proof of \Cref{t:convergenceV_QSA} can be used with no significant changes to establish convergence:

\[
\barY = \lim_{t\to\infty}  Y_t  =  \lim_{T\to \infty}\frac{1}{T} \int_0^T  \bigl[\XiI_t   + [A^\ocp + r(\rho)I]^{-1}   \partial_\theta \XiI_t  \cdot  f(\theta^\ocp, \qsaprobe_t) \bigr]\, dt
=
  [A^\ocp + r(\rho)I]^{-1} \barUpupsilon
\]
where the second equality holds because $\displaystyle \lim_{T\to \infty}\frac{1}{T} \int_0^T  \XiI_t \, dt  =0$ under \eqref{e:NoiseInt}, and from the definition of $\barUpupsilon$ in (A5).  This gives the coupling result  $  \scerror_t   = \barY  +  \XiI_t   + o(1) $.

The second approximation  in \eqref{e:QSAcouple_rho} follows from the first:   applying the definition \eqref{e:scaled_error}
gives
\[
\ODEstate_t   =  \barODEstate_t  +   a_t [\barY  +  \XiI_t ]  + o(a_t)
\]

For $\rho < 1$ we have   $ \barODEstate_t  = \theta^\ocp  +  o(a_t)$ since $\barODEstate_t$ converges to $\theta^\ocp$ faster than $t^{-N}$ for any $N$.  

 For $\rho = 1$, \Cref{t:Fast_rho} (i) implies that $ \barODEstate_t  = \theta^\ocp  +  O(t^{-\varrho_0})$ where $\text{Real}(\lambda) < -\varrho_0$ for every eigenvalue $\lambda$ for $A^\ocp$. Therefore, $\barODEstate_t  = \theta^\ocp  +  o(t^{-1})$ if  $I+A^\ocp$ is Hurwitz.
\end{proof}


%

\begin{proof}[Proof of \Cref{t:Couple_rhoRP}]
Let $\ODEstate^{\text{RP}}_T$ be defined by \eqref{e:QSAgenRPc}.
By \Cref{t:Fast_rho} (ii), the convergence rate of $\barODEstate_t$ is exponential in $t$ and we obtain the approximation 
\begin{equation}
\ODEstate^{\text{RP}}_T  - \theta^\ocp
\eqdef
\frac{1}{T-T_0}  \int_{T_0}^T [\ODEstate_t - \theta^\ocp] \, dt  =    \frac{1}{T-T_0}  \int_{T_0}^T [\ODEstate_t  -  \barODEstate_t] \, dt + o(1)
\label{e:QSA-R-P}
\end{equation} 
Combining the definition $    a_t             \scerror_t    =  \ODEstate_t  -  \barODEstate_t$ and  
\[
 \ddt \scerror_t   =     a_t      A^\ocp       \scerror_t +  a_t \Delta_t    +       \tilXi_t   
\]
(see \eqref{e:scaledErrorRep_cor_rho}),
gives by the
Fundamental Theorem of Calculus:
\[
   \scerror_T  -    \scerror_{T_0}     =                A^\ocp   \int_{T_0}^T [ \ODEstate_t  -  \barODEstate_t] \, dt 
   		+   \int_{T_0}^T    [ a_t \Delta_t    +       \tilXi_t   ] \, dt
\]
Under (A4),  we have $\|a_t \Delta_t  \| = a_t O(\|  \ODEstate_t  -  \barODEstate_t\|)$.
 It then follows from \Cref{t:Couple_main} (i) that $\displaystyle \int_{T_0}^T a_t \Delta_t \, dt = o(1)$ for $\rho \in (\half, 1)$. 
 Therefore,
\begin{equation}
 \int_{T_0}^T [ \ODEstate_t  -  \barODEstate_t]  \, dt = [A^\ocp]^{-1}  \Bigl\{   \scerror_T  -    \scerror_{T_0}    -             
  \int_{T_0}^T           \tilXi_t  \, dt  + o(1)  \Bigr\} 
  \label{e:tilTheta-Z-Xil}
\end{equation}
Next recall \eqref{e:QSAnoiseRep} and \eqref{e:NoiseInt}, which gives 
\[
\begin{aligned} 
  \int_{T_0}^T      \tilXi_t   \, dt    
     &  =    [  \XiI_T (\ODEstate_T ) -    \XiI_{T_0} (\ODEstate_{T_0}  ) ] 
-     \int_{T_0}^T     
 a_t  \Upupsilon_t(\ODEstate_t)     \, dt      
 \\
   &  =   [  \XiI_T (\ODEstate_T ) -    \XiI_{T_0} (\ODEstate_{T_0}  ) ] 
  	 -       \int_{T_0}^T     
 a_t \Upupsilon_t    \, dt      +  
    \int_{T_0}^T     
 a_t O(\|  \ODEstate_t  -  \barODEstate_t\|)   \, dt  
 \end{aligned} 
\]
where $\displaystyle \int_{T_0}^T     
a_t O(\|  \ODEstate_t  -  \barODEstate_t\|)   \, dt  = o(1)$. 
Recalling the definition $Y_t \eqdef  \scerror_t -  \XiI_t(\ODEstate_t)$ in \eqref{e:Y} gives, 

\begin{equation}
\scerror_T  -    \scerror_{T_0}    -             
\int_{T_0}^T           \tilXi_t  \, dt  
=  Y_T  -  Y_{T_0}    +    \int_{T_0}^T     a_t \Upupsilon_t    \, dt + o(1)
\label{e:tilTheta-Y-Xil}
\end{equation}
Combining \eqref{e:tilTheta-Z-Xil} and \eqref{e:tilTheta-Y-Xil} gives
\[
  \frac{1}{T-T_0} \int_{T_0}^T[ \ODEstate_t  -  \barODEstate_t]  \, dt =  \frac{1}{T-T_0}  [A^\ocp]^{-1}  \Bigl\{    Y_T  -  Y_{T_0}    +    \int_{T_0}^T     a_t \Upupsilon_t    \, dt + o(1)   \Bigr\} 
\]
The last integral is the crucial term.  Write  
  \[
   \tilUpupsilon_t =  \Upupsilon_t - \barUpupsilon
   \quad \textit{and} \quad
  \tilUpupsilon_t^I =  \int_0^t   [\Upupsilon_r  - \barUpupsilon] \, dr
 \]
  where $\barUpupsilon$ is the ergodic mean introduced in (A5), and both terms are  bounded in $t$ by assumption.  We then obtain
 \[
    \int_{T_0}^T     a_t \Upupsilon_t    \, dt      =   \barUpupsilon  \int_{T_0}^T     a_t     \, dt 
    	+     \int_{T_0}^T     a_t  \, d \tilUpupsilon_t^I
\]
  Then, using integration by parts, the second term above is bounded:
 \[
 \begin{aligned} 
  \int_{T_0}^T     a_t  \, d \tilUpupsilon_t^I      
     	&=    a_t    \tilUpupsilon_t^I \Big|_{t=T_0}^T   -    \int_{T_0}^T  [ \ddt a_t  ]   \tilUpupsilon_t^I\, dt
     \\
	&=    [ a_T    \tilUpupsilon_T^I  -a_{T_0}     \tilUpupsilon_{T_0} ^I]     + \rho    \int_{T_0}^T \frac{1}{(1+t)^{1+\rho}}   \tilUpupsilon_t^I\, dt   
\end{aligned} 
 \]
 This establishes the conclusion:   if $\barUpupsilon$ is non-zero, then averaging* does not achieve $1/T$ convergence rate,   since $  \int_{T_0}^T     a_t     \, dt 
$ is $O(T^{1-\rho})$. And, if $\barUpupsilon$ is zero, the $1/T$ convergence rate holds for $\ODEstate^{\text{RP}}_T  - \theta^\ocp$.   
  \end{proof}

\end{document}